\renewcommand{\d}{ {\rm d}}
\def\twofig{.48\textwidth}
\newcommand{\IND}{\ensuremath{\mathbbm{1}}}
\def\cH{\mathcal{H}}
\DeclareUrlCommand\email{\urlstyle{rm}}
\numberwithin{equation}{section}
\newtheorem{theorem}{Theorem}[section]
\newtheorem{corollary}[theorem]{Corollary}
\newtheorem{proposition}[theorem]{Proposition}
\theoremstyle{definition}
\newtheorem{Remark}[theorem]{Remark}
\newenvironment{remark}{\begin{Remark}\rm}{\end{Remark}}
\newtheorem{Example}[theorem]{Example}
\newcommand{\eq}{\begin{equation}}
\newcommand{\qe}{\end{equation}}
\newcommand{\beq}{\begin{equation}}
\newcommand{\eeq}{\end{equation}}
\newcommand{\N}{\mathbb{N}}
\newcommand{\Z}{\mathbb{Z}}
\newcommand{\R}{\mathbb{R}}
\newcommand{\C}{\mathbb{C}}
\newcommand{\E}{\mathbb{E}}
\renewcommand{\P}{\mathbb{P}}
\newcommand{\bs}{\boldsymbol}
\newcommand{\Var}{\mathbb{V}{\mathrm{ar}}}
\renewcommand{\epsilon}{ \varepsilon}
\renewcommand{\phi}{ \varphi}
\renewcommand{\d}{ {\rm d}}
\renewcommand{\frak}{\mathfrak}
\renewcommand{\Im}{ \frak{Im}}
\newcommand{\e}{\mathrm{e}}
\renewcommand{\i}{\mathrm{i}}
\renewcommand{\H}{\mathcal{H}}
\newcommand{\D}{\mathbb{D}}
\newcommand{\law}{ \stackrel{\mathrm{law}}{=}}
\renewcommand{\L}{\mathscr L}
\newcommand{\GAF}{\mathsf{GAF}}
\title{Time-frequency transforms of white noises\\
and Gaussian analytic functions}
\date{}
\begin{document}
\author{
\ R{\'e}mi Bardenet\,\footnote{Universit\'e de Lille, CNRS, Centrale Lille, UMR 9189 -- CRIStAL, 59651 Villeneuve d’Ascq Cedex, France. Email: \href{mailto:remi.bardenet@gmail.com}{\nolinkurl{remi.bardenet@gmail.com}}}\;\quad
\ Adrien Hardy\,\footnote{Universit\'e de Lille, CNRS, Inria, UMR 8524, Laboratoire Paul Painlevé, F-59000 Lille, France.  \newline
Email: \href{mailto:adrien.hardy@univ-lille.fr}{\nolinkurl{adrien.hardy@univ-lille.fr}},\,
}
}
\maketitle

\begin{abstract} A family of Gaussian analytic functions (GAFs) has recently been linked to the Gabor transform of white Gaussian noise \citep{BaFlCh18}. This answered pioneering work by \cite{Fla15}, who observed that the zeros of the Gabor transform of white noise had a very regular distribution and proposed filtering algorithms based on the zeros of a spectrogram. The mathematical link with GAFs provides a wealth of probabilistic results to inform the design of such signal processing procedures. In this paper, we study in a systematic way the link between GAFs and a class of time-frequency transforms of Gaussian white noises on Hilbert spaces of signals. Our main observation is a conceptual correspondence between pairs (transform, GAF) and generating functions for classical orthogonal polynomials. This correspondence covers some classical time-frequency transforms, such as the Gabor transform and the Daubechies-Paul analytic wavelet transform. It also unveils new windowed discrete Fourier transforms, which map white noises to fundamental GAFs.  All these transforms may thus be of interest to the research program ``filtering with zeros''. We also identify the GAF whose zeros are the extrema of the Bargmann transform of the white noise and derive their first intensity. Moreover, we discuss  subtleties in defining a white noise and its transform on infinite dimensional Hilbert spaces. Finally, we provide quantitative estimates concerning the finite-dimensional approximations of these white noises, which is of practical interest when it comes to implementing signal processing algorithms based on GAFs.
\end{abstract}
\tableofcontents

\section{Introduction}
\label{s:intro}
Spectrograms are a cornerstone of time-frequency analysis. They associate to each time and frequency a real number that measures the energy content of a signal at that time and frequency, unlike global-in-time tools such as the Fourier
transform. Since it is natural to expect that there is more energy where there
is more information or signal, most methodologies have focused on detecting and
processing the local maxima of the spectrogram, see the reference books \citep{Coh95,Fla98,Gro01}. Usual techniques include \emph{ridge extraction},
e.g., to identify chirps, or \emph{reassignment} and \emph{synchrosqueezing}, to
better localize the maxima of the spectrogram before further quantitative
analysis.

\cite{Fla15} has recently observed that, when applied to Gaussian white noise, the short-time Fourier transform (STFT) with Gaussian window (or \emph{Gabor transform}; \cite[Chapter 3]{Gro01}) is an analytic function, and thus has well-identified, separated zeros. \cite{Fla15} proposed to use the point pattern formed by these random zeros in filtering and reconstruction of signals in noise. In particular, \cite{Fla15} empirically assessed that the zeros of the Gabor spectrogram of white noise spread out very evenly on the time-frequency plane, with very regular Voronoi tesselations.

In the same setting of a short-time Fourier transform with Gaussian window, \cite*{BaFlCh18} proved that the zeros of the spectrogram of white noise have the same distribution as the zeros of the \emph{planar Gaussian analytic function} (GAF). This provided a first link between zeros of random spectrograms and GAFs, which explained the observations of \cite{Fla15}. \cite{BaFlCh18} then used probabilistic results on this planar GAF to inform the design of signal reconstruction procedures in the spirit of \citep{Fla15}.

GAFs are Gaussian processes that are supported on analytic functions. They have generated recent interest among probabilists, among other reasons due to their connection to determinantal point processes, see \citep{HKPV09}.
 Among the sets of zeros of these random analytic functions, three stand out as the only zero sets of GAFs that are respectively invariant to isometries of the plane $\C$, the hyperbolic plane $\mathbb H$, and the sphere $\mathbb S$. Following the result of \cite{BaFlCh18}, a natural question is whether other common transforms in signal processing (say, an STFT with a non-Gaussian window or a wavelet transform) also tie to GAFs. We answer here this question affirmatively. Conversely, since the GAF identified by \cite{BaFlCh18} is one of the three with special invariance properties, a probabilist may ask whether the other two also naturally appear. We also give here an affirmative answer. In particular, the analytic wavelet transform of \cite{DaPa88} is shown to be related to the so-called hyperbolic GAF, with a set of zeros that is a determinantal point process. The latter link has also been simultaneously and independently made by \cite*{AHKR18Sub}.

More generally, we establish a {\it template} theorem that links a generic transform to a GAF. The central observation of this work is that there is a correspondence between pairs (transform, GAF) and generating functions for classical orthogonal polynomials. This allows us to start a dictionary of such pairs, which includes some classical signal processing transforms and GAFs. The type of transform we consider can be loosely described as taking a signal in a complex Hilbert space $\cH$, decomposing this signal onto an orthonormal basis $(f_k)$ of $\cH$ associated with a family of classical orthogonal polynomials, and reconstructing a complex function by replacing each element of the basis by a  complex monomial, up to an appropriate isometry of $\C,\mathbb H$ or $\mathbb S.$ We shall see that the zeros of the associated white noise's transform have the same distribution than the zeros of the GAF associated with the corresponding space $\C/\mathbb H/\mathbb S$.

We recover some known transforms, such as the STFT with Gaussian window as in \citep{BaFlCh18} or the analytic wavelet transform of \cite{DaPa88}, but also unveil new transforms as well as new GAFs. Among these new transforms, some are \emph{discrete} windowed Fourier transforms that preserve {exactly} the distribution of the zeros of their continuous counterparts, a fact of independent computational interest. {Our main results are summarized in Table~\ref{f:summary}.
As we shall see below, the  transform acting on $L^2(\R,\C)$ is the Bargmann transform, and is well-known to be related to the STFT with a Gaussian window or the Gabor transform \citep{Gro01}. The transform acting on the Hardy space $H^2(\R)$ of analytic signals is the Bergman transform, and is a similar modification of the Daubechies-Paul wavelet transform \citep{Abr08}. The three other transforms, acting on discrete signals, are new. We have picked them for to their remarkable interaction with GAFs and classical orthogonal polynomials; it is somehow intriguing that discrete transforms also lead to the same GAFs. We hope they can be of interested to the signal processing community.}

\begin{table}[h]
  {
\centering
\begin{tabular}{|c|c|c|c|c|}
  \hline
$\H$  & Transform & OP & GAF & Theorem\\
  \hline
  $L^2(\R,\C)$ & $\displaystyle \frac{\e^{-z^2/2}}{\pi^{1/4}} \int_\R \overline{f(x)}\,\e^{\sqrt2 xz-x^2/2}\d x$ &  Hermite & $\C$ & Th.~\ref{t:planarL} \\
    \hline
  $\ell^2(\N,\C)$ & $\displaystyle\sum_{x\in\N}\overline{f(x)} \frac{z^x }{\sqrt{x!}}$  & Charlier 	& $\C$ &Th.~\ref{t:planarl}  \\
    \hline
  $H^2(\R)$ & $\displaystyle  \frac1{(1-z)^{2\beta+1}} \int_{\R_+} \overline{\hat f(x)}\, x^{\beta}\e^{-\frac x 2\tfrac{1+z}{1-z}} \d x$ &  Laguerre & $\mathbb H$ & Th.~\ref{t:Hyp}  \\
  \hline
  $\ell^2(\N,\C)$ & $\displaystyle\sum_{x\in\N} \overline{f(x)} \sqrt{\frac{\Gamma(x+\alpha+1)}{x!}}\, z^x$ & Meixner & $\mathbb H$ & Th.~\ref{t:hyp} \\
  \hline
    $\C^{N+1}$ & $\displaystyle \sum_{x=0}^N \overline{f(x)} \sqrt{ {N \choose x}} \,z^x$ & Krawtchouk & $\mathbb S$ &Th.~\ref{t:spherical}  \\
  \hline
\end{tabular}
\caption{Examples of correspondences between a Hilbert space $\H$ of signals, a transform, a family of orthogonal polynomials, a Gaussian analytic function, and the corresponding theorem in this work.}
\label{f:summary}
}
\end{table}

{
An important conceptual problem related to the previous discussion is the rigorous definition of the white noises to which we apply our transforms. Another motivation of this work was to bring forward the subtleties one can meet in constructing such an object. We use the abstract Wiener space approach of \cite{Gro67}. This construction requires specifying an orthonormal basis, which allows a description of where the white noise actually lives. Indeed,  constructions like the Bochner-Minlos theorem or abstract Gaussian spaces typically yield measures supported on the space of tempered distributions, which is large compared to the original space of signals. The approach of \cite{Gro67} allows to build a white noise living on a smaller space of distributions, which are quite regular (typically the dual of a Sobolev space of order $1$ or $2$ having some low moment conditions). In particular, we obtain a natural definition of an analytic Gaussian white noise, a notion that has generated some confusion in signal processing; this echoes the work of \cite{Pug82} and answers a question raised in \cite[Section 4.4]{BaFlCh18}. Finally, our tailored construction of white noise allows us to provide quantitative estimates on the loss of accuracy incurred by replacing the white noise by its finite dimensional approximations. This yields control on how the distribution of the zeros of a transform of white noise departs from the zeros of the truncated transforms that are implemented in practice.
}

The rest of the paper is organized as follows: In Section~\ref{s:gaf} we give an introduction to GAFs and state our main results in the form of several theorems linking known and new transforms of white noises to GAFs. Next, in Section~\ref{s:white}, we discuss the definition of a white noise on an infinite-dimensional space and its subtleties, and we give the rigorous definition used in this work. We also identify a common structure in all the transforms of interest in this work, and we justify that the transforms of white noise are well defined in this context. The section ends with our template theorem, of which all the theorems in Section~\ref{s:gaf} are corollaries once combined with appropriate orthogonal polynomial generating functions identities. Section~\ref{s:proofs} then contains all detailed proofs. In Section~\ref{s:computation}, we give computational details. We further derive several results on finite dimensional approximations of the white noises and their transforms, including their zeros, which are of practical interest. Finally, we provide a companion Python toolbox\footnote{\url{https://github.com/rbardenet/tf-transforms-and-gafs}} to reproduce all figures in the paper.

\paragraph{Acknowledgments:} This work has been partially supported by ANR JCJC \textsc{BoB} (ANR-16-CE23-0003) and Labex \textsc{CEMPI} (ANR-11-LABX-0007-01).

\section{GAFs and time-frequency transforms}
\label{s:gaf}

In the following, $ N_\C(0,\sigma^2)$ refers to the law of a centered Gaussian variable on $\C$ with variance $\sigma^2$, namely the law of $Z:=\sigma(X+\i Y)/\sqrt 2$ with $X,Y$ identically distributed and independent (i.i.d.) real standard $ N(0,1)$ variables; note that the $\sqrt 2$ is here to have $\Var(Z):=\E|Z|^2-|\E(Z)|^2=\sigma^2$. More generally, given $m\geq 1$, a complex Gaussian vector of mean $\mu\in\C^m$ and covariance matrix $\Sigma:=AA^*$, with $A\in\mathcal M_{n}(\C)$, is a random variable with the same distribution as $AX+\mu$, where $Z:=(Z_1,\ldots,Z_d)$ with $(Z_i)_{i=1}^m$ i.i.d. $N_\C(0,1)$ random variables. Given an open subset  $\Lambda\subset\C$, we let $\mathcal A(\Lambda)$ be the space of analytic functions on $\Lambda$.

\subsection{Gaussian analytic functions and their zeros}
\label{s:GAFsAndTheirZeros}

A \emph{Gaussian analytic function} (GAF) on $\Lambda\subset\C$ is a random variable $\GAF$ taking its values in $\mathcal A(\Lambda)$ such that, for any $m\geq 1$ and $z_1,\ldots, z_m\in\Lambda$, the vector $(\GAF(z_1),\ldots,\GAF(z_m))$ is a complex Gaussian vector. In this work, we focus on GAFs of the form
\eq
\label{canGAF}
\GAF(z)=\sum_{k=0}^\infty \xi_k\, \Psi_k(z)
\qe
where $(\xi_k)_{k\in\N}$ is a sequence of i.i.d. $N_\C(0,1)$ random variables and $(\Psi_k)_{k\in\N}$ is a sequence in $\mathcal A (\Lambda)$ satisfying
\eq
\label{CKweak}
\sup_{z\in K}\sum_{k=0}^\infty | \Psi_k(z)|^2<\infty
\qe
for any compact $K\subset\Lambda$. Condition \eqref{CKweak} ensures that $\GAF$ converges almost surely on every compact subset of $\Lambda$, and thus that $\GAF$ in \eqref{canGAF} is a well-defined analytic function, see \cite[Lemma 2.2.3]{HKPV09}. As a centered Gaussian process on the space of analytic functions $\mathcal A(\Lambda)$, $\GAF$ is completely characterized by its covariance structure, namely
\eq
\label{e:cov}
\E\Big[\GAF(z)\overline{\GAF(w)}\Big]=\sum_{k=0}^\infty \Psi_k(z)\overline{\Psi_k(w)},\qquad z,w\in\Lambda.
\qe
Our particular interest lies in the random set $\GAF^{-1}(0)$ of zeros of $\GAF$. More precisely, our interest lies in the \emph{random} zeros of $\GAF$, that is, zeros that are not present in every sample of $\GAF$. \cite[Lemma 2.4.1]{HKPV09} states that the random zeros of any nonzero GAF are simple with probability one. Thus, the random zeros form a (simple) point process, namely a random variable taking values in the locally finite subsets of $\Lambda$. It is often described through its \emph{joint intensities} $(\rho_k)_{k\geq 1}$, defined by
\eq
\int_{\Lambda^k}\psi(z_1,\ldots,z_k)\,\d\rho_k(z_1,\ldots, z_k) = \E\left[\sum_{\substack{(x_1,\ldots,x_k)\in \GAF^{-1}(0)\\x_1\neq \cdots\neq x_k}} \psi(x_1,\ldots,x_k)\right]
\label{e:correlationIntensities}
\qe
for any bounded compactly supported measurable map $\psi:\Lambda^k\to\R$.

It is also known that the law of the point process $\GAF^{-1}(0)$ is characterized by its first intensity $\rho_1$ solely and moreover that, if two GAFs, say $\GAF$ and $\widetilde{\GAF}$, share the same zero set distribution, namely $\GAF^{-1}(0)\law \widetilde{\GAF}^{-1}(0)$, then there exists a non-vanishing and deterministic $\phi\in\mathcal A(\Lambda)$ such that $\GAF=\phi \times \widetilde{\GAF}$, see \cite[Theorem 2.5.2]{HKPV09}.

\subsection{Three GAFs, five time-frequency transforms}
 There are three prototypical spaces in complex geometry: the complex plane $\C$, the hyperbolic plane $\mathbb H$, and the sphere $\mathbb S$. For instance, the so-called uniformization theorem says that any simply connected Riemann surface is conformally equivalent to $\C$,  $\mathbb H$ or $\mathbb S$.  The complex plane is a Riemannian manifold with null curvature when equipped with the Euclidean metric $g_\mathrm{flat}$. The hyperbolic plane $\mathbb H$ can be modeled as the unit disc $\mathbb D:=\{z\in\C:\,|z|< 1\}$ with the hyperbolic metric $(1-|z|^2)^{-2}g_\mathrm{flat}$, giving a Riemannian manifold of constant negative curvature. It can alternatively be modeled as the upper-half plane $\C_+:=\{z\in\C:\, \Im(z)>0\}$ with the metric $\Im(z)^{-1}g_\mathrm{flat}.$ The sphere $\mathbb S$ can be modeled as the extended complex plane $\C\cup\{\infty\}$ with the spherical metric $(1+|z|^2)^{-2}g_\mathrm{flat}$, yielding a Riemannian manifold of constant positive curvature. This metric is the image by the stereographic projection of the natural metric on the unit sphere of $\R^3$ induced by the Euclidean structure of $\R^3$. The group of isometries for each of these three spaces is well-known, and it is natural to ask which GAFs have their zeros invariant under these isometries. It turns out that, up to multiplication by a non-vanishing and deterministic analytic function, there is only a one-parameter family of GAFs satisfying this condition for each space; see \citep{HKPV09}.

 A realization of the zero sets of each of the three invariant GAFs is shown in Figure~\ref{f:gafs} as white dots. For readers unfamiliar with complex geometry, Figure~\ref{f:gafs} can also serve as an illustration of the previous paragraph. Roughly speaking, on all three subfigures, the white dots are uniformly spread, with roughly equal distance between each pair of nearest neighbors. Figure~\ref{f:planarDemo} represents a portion of the complex plane, and the flat metric corresponds to the intuitive idea of a uniform distribution of zeros on this portion. Figure~\ref{f:hyperbolicDemo} represents a portion of $\C_+$ with the hyperbolic metric $\Im(z)^{-1}g_\mathrm{flat}.$ Again, the white dots are uniformly spread for this metric, with distances between nearest neighbors roughly equal. Figure~\ref{f:sphericalDemo} similarly represents the sphere with the spherical metric.

\begin{figure}
\centering
\subfigure[Zeros of the planar GAF]{
  \includegraphics[width=\twofig]{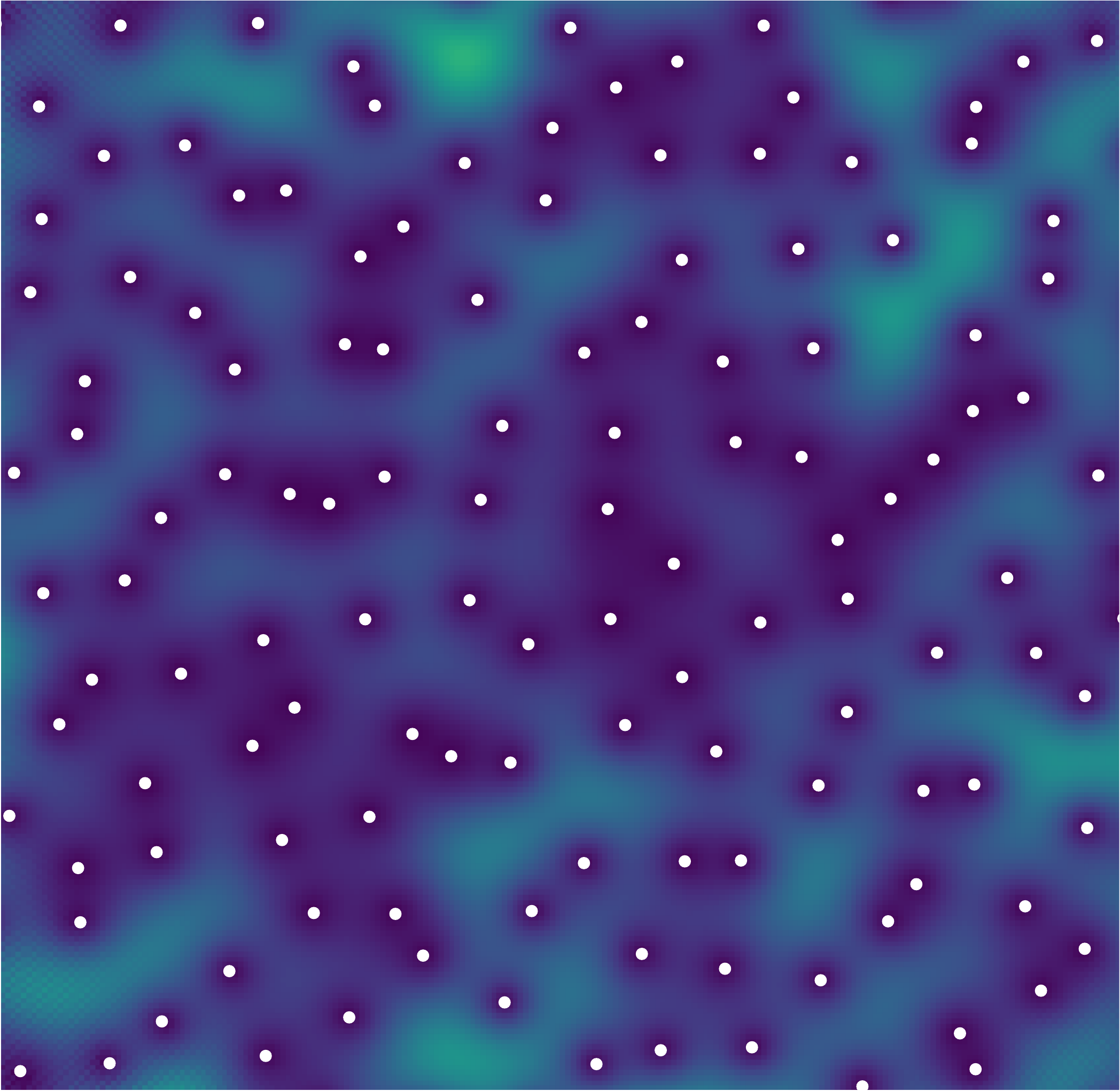}
  \label{f:planarDemo}
  }
\subfigure[Zeros of the spherical GAF]{
  \includegraphics[width=\twofig]{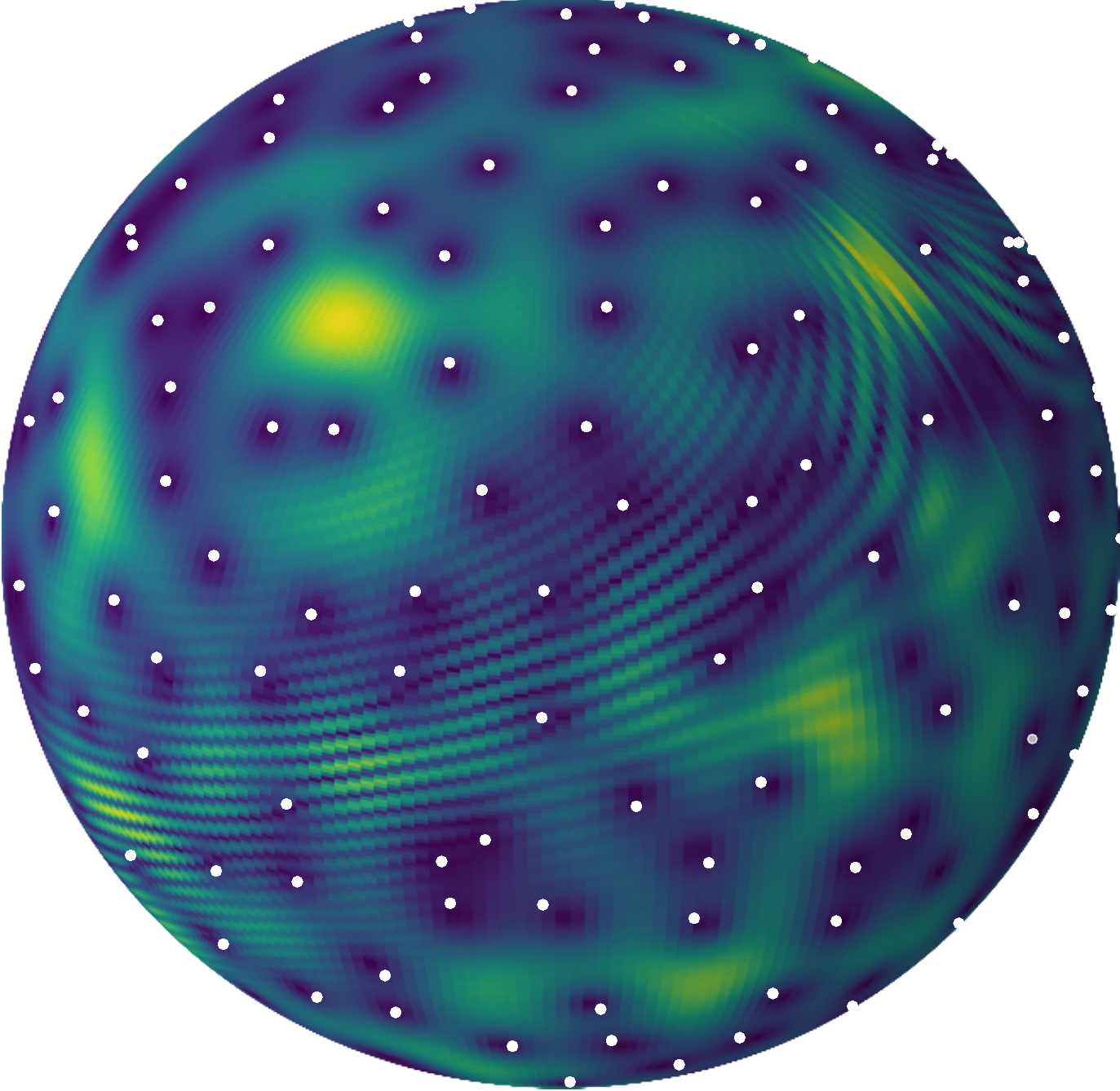}
  \label{f:sphericalDemo}
  }
\subfigure[Zeros of the hyperbolic GAF]{
  \includegraphics[width=\textwidth]{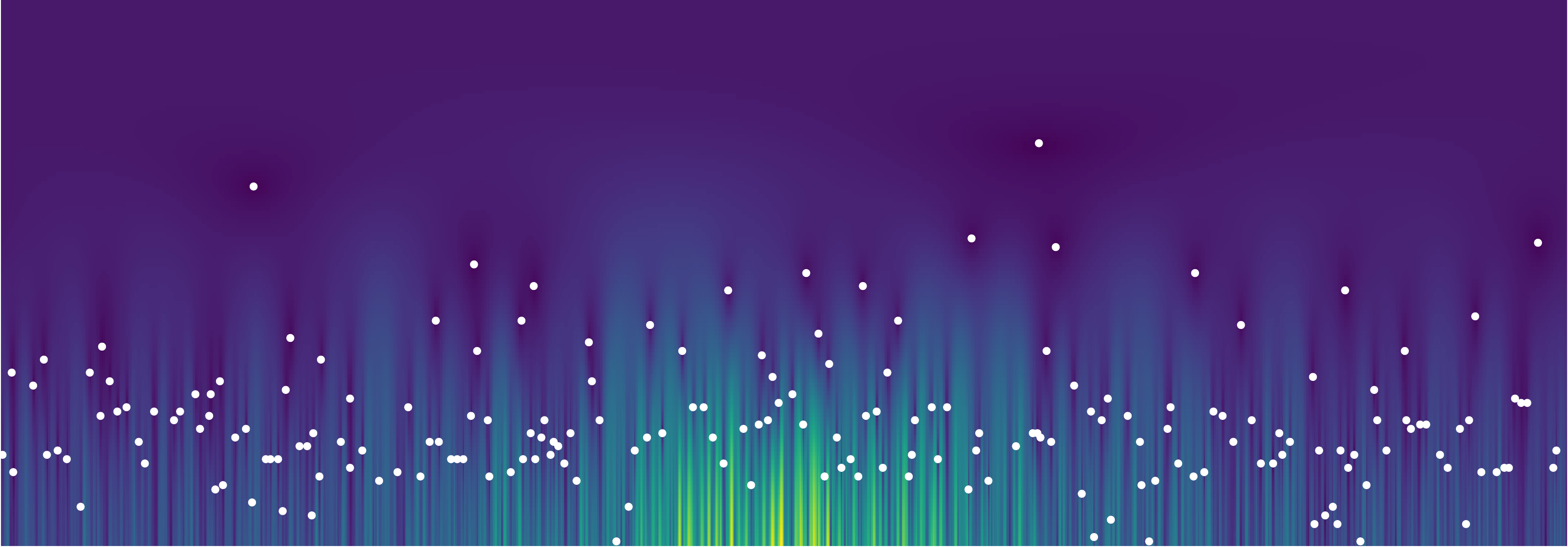}
  \label{f:hyperbolicDemo}
  }
\caption{White dots are sample zero sets of \ref{f:planarDemo} the planar GAF with $\ell=1$, obtained by truncating the Charlier transform $\ell_\mathbb{C}^{(1)}\xi$, \ref{f:sphericalDemo} the spherical GAF with $N=2^8$, obtained by the Krawtchouk transform $\ell_{\mathbb S}^{(N)}\xi$, and \ref{f:hyperbolicDemo} the hyperbolic GAF with $\alpha=0$, obtained by a Riemann approximation to the Paul-Daubechies transform $W_0$ of the truncated series of an analytic white noise.}
\label{f:gafs}
\end{figure}

\subsubsection{The planar GAF}
\label{s:planar}
Let us take $\Lambda=\mathbb{C}$. First, the \emph{planar GAF} of parameter $\ell>0$,  defined on $\mathbb{C}$ by
\eq
\label{planarGAF}
\GAF_{\C}^{(\ell)}(z):=\sum_{k=0}^\infty \xi_k \,\frac1{\sqrt{ k!}}\left(\frac z \ell\right)^k ,
\qe
where $(\xi_k)_{k\in\N}$ is a sequence of i.i.d. $N_\C(0,1)$ random variables, has zeros invariant under the isometries of $(\C,g_\mathrm{flat})$, namely the mappings $z\mapsto a z+b$ with $a,b\in\C$ and $|a|=1$. In other words, the distribution of the zeros of \eqref{planarGAF} is invariant to translations and rotations. The covariance kernel of \eqref{planarGAF} is given by
\eq
\label{e:covPlanar}
\E\Big[\GAF_{\C}^{(\ell)}(z)\overline{\GAF_{\C}^{(\ell)}(w)}\,\Big]=\e^{z\overline  w/\ell^2}.
\qe

\paragraph{The STFT with Gaussian window and Hermite polynomials.}
A first connection between time-frequency transforms of white noises and GAFs has been obtained by \cite*{BaFlCh18}. More precisely, for any complex-valued signal $f\in L^2(\R,\C)$, define the \emph{Bargmann transform} as
\eq
\label{barg}
\L_{\mathrm{\C}} f(z):=\frac{\e^{-z^2/2}}{\pi^{1/4}} \int_\R \overline{f(x)}\,\e^{\sqrt2 xz-x^2/2}\d x,\qquad z\in\C\,.
\qe
The Bargmann transform is known to be the STFT with Gaussian window up to nonvanishing multiplicative terms after reflection $z \mapsto \overline z$, see e.g. \cite[Proposition 3.4.1]{Gro01}.
The time-frequency interpretation of the STFT is standard, see e.g. \citep{Coh95,Fla98,Gro01}. If $f_k$ stands for the $k$-th Hermite function, see \eqref{e:hermiteFunctions} for a precise definition, then an important property of the Bargmann transform is that
$$
\L_{\mathrm{\C}} f_k(z)=\frac{z^k}{\sqrt k!}.
$$
Thus $\L_{\mathrm{\C}}$ maps the Hermite functions to monomials. From a time-frequency perspective, this translates into the spectrogram of a Hermite function being well localized, essentially supported on a centered ring in the time-frequency plane. This makes Hermite functions amenable to postprocessing procedures like reassignment \citep{Fla13}.

The following prototypical result is the main motivation for this work. The precise definition of white noise we use in this paper is based on the abstract Wiener spaces of \cite{Gro67} and will be developed in Section~\ref{s:white}. For now we simply note that a white noise depends by construction on a specific basis of the Hilbert space of signals of interest.

\begin{theorem}
\label{t:planarL}
Let $\xi$ be the white noise on $L^2(\R,\C)$ associated with the Hermite basis $(f_k)$, as defined in Proposition \ref{p:white}. Then,
$$
\L_{\mathrm{\C}}\xi \law \GAF_\C^{(1)}.
$$
In particular, the zeros of $\L_{\mathrm{\C}}\xi$, which have the same law as the zeros of the STFT with Gaussian window of $\xi$, are invariant under the planar isometries $z\mapsto \alpha z+\beta$ with $\alpha,\beta\in\C$ and $|\alpha|=1.$
\end{theorem}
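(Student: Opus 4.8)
The plan is to realise $\L_\C\xi$ as the random series that defines $\GAF_\C^{(1)}$, using the single algebraic input that the Bargmann transform maps the Hermite basis onto the monomials $z^k/\sqrt{k!}$. This statement is an instance of the template theorem of Section~\ref{s:white}, so I only need to supply the two ingredients that the template consumes and then match covariances.

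First I would check that $\L_\C\xi$ is a well-defined analytic Gaussian field. For each fixed $z\in\C$ the function $x\mapsto\e^{\sqrt2\,xz-x^2/2}$ lies in $L^2(\R,\C)$, since its modulus decays like $\e^{-x^2/2}$, so $f\mapsto\L_\C f(z)$ is a bounded conjugate-linear functional on $L^2(\R,\C)$; let $\psi_z\in L^2(\R,\C)$ represent it, $\L_\C f(z)=\overline{\langle\psi_z,f\rangle}$. Since the white noise $\xi$ of Proposition~\ref{p:white} pairs with every element of $L^2(\R,\C)$ to give a complex Gaussian variable, and does so jointly, $(\L_\C\xi(z_1),\dots,\L_\C\xi(z_m))$ is a centered complex Gaussian vector for all $z_1,\dots,z_m\in\C$. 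Writing $\xi_k:=\langle\xi,f_k\rangle$ for the i.i.d. $N_\C(0,1)$ coordinates of $\xi$ in the Hermite basis and expanding $\psi_z$ in that same basis, one obtains, for each fixed $z$, the almost sure identity
\[
\L_\C\xi(z)=\sum_{k\ge0}\overline{\xi_k}\,\L_\C f_k(z)=\sum_{k\ge0}\overline{\xi_k}\,\frac{z^k}{\sqrt{k!}}\,.
\]
With $\Psi_k(z)=z^k/\sqrt{k!}$ one has $\sup_{|z|\le R}\sum_{k\ge0}|z|^{2k}/k!=\e^{R^2}<\infty$, so condition \eqref{CKweak} holds and, by \cite[Lemma 2.2.3]{HKPV09}, the series converges almost surely locally uniformly and defines an element of $\mathcal A(\C)$; this is the rigorous meaning of $\L_\C\xi$.

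Second I would identify the limit. Because $(\overline{\xi_k})_{k\ge0}$ is again a sequence of i.i.d. $N_\C(0,1)$ variables --- the standard complex Gaussian being invariant under complex conjugation --- the series above has exactly the law of \eqref{planarGAF} with $\ell=1$. Equivalently, a direct computation gives $\E[\L_\C\xi(z)\overline{\L_\C\xi(w)}]=\sum_{k\ge0}\frac{z^k}{\sqrt{k!}}\,\overline{\left(\frac{w^k}{\sqrt{k!}}\right)}=\e^{z\overline w}$, which is the covariance \eqref{e:covPlanar} of $\GAF_\C^{(1)}$. Two centered Gaussian fields with the same covariance that are both almost surely analytic have the same law on $\mathcal A(\C)$, whence $\L_\C\xi\law\GAF_\C^{(1)}$; the invariance of the zeros under $z\mapsto\alpha z+\beta$ with $|\alpha|=1$ is then inherited from the corresponding known property of $\GAF_\C^{(1)}$.

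I expect the only genuine obstacle to be justifying the displayed identity ``$\L_\C\xi(z)=\sum_k\overline{\xi_k}\,z^k/\sqrt{k!}$''. Because the white noise lives on an abstract Wiener space strictly larger than $L^2(\R,\C)$, this is not a literal substitution: one must pass the evaluation functional $f\mapsto\L_\C f(z)$, which is continuous only for the $L^2$-topology, through a limit taken in the weaker Wiener-space topology, and then upgrade the pointwise-a.s. identity (valid for each fixed $z$) to an identity of $\mathcal A(\C)$-valued random variables --- in particular one must first make sense of $\L_\C\xi$ as a genuine analytic function of $z$. This is exactly what the construction of Section~\ref{s:white} and the ensuing template theorem are designed to handle; granting that machinery, the argument above is complete.
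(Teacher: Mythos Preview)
Your proposal is correct and follows essentially the same route as the paper: both arguments reduce to applying the template Theorem~\ref{t:main} with $\Psi_k(z)=z^k/\sqrt{k!}$ and then reading off the planar GAF. The paper additionally derives the identity $\L_\C f_k(z)=z^k/\sqrt{k!}$ from the Hermite generating function \eqref{genHermite}, whereas you take it as given (it is stated just before the theorem); and the paper checks the stronger hypothesis \eqref{Ass+} rather than \eqref{CKweak}, which is what the template theorem actually consumes --- but for $\Psi_k(z)=z^k/\sqrt{k!}$ the extra factor $(1+k^2)$ is harmless, so this is a one-line verification you should add when you invoke the machinery in your final paragraph.
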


Theorem~\ref{t:planarL} has already been obtained in \citep{BaFlCh18} with a slightly different definition of the white noise, taking its values in the space of tempered distributions. We provide in Section~\ref{e:gabor} an alternative proof which fits in our more general setting. We note that \cite{BaFlCh18} crucially make use of the invariance to the planar isometries to estimate functional summary statistics from realizations of the zeros, yielding detection and reconstruction algorithms that formalize the original approach of \cite{Fla15}.

Next, we ask about a discrete counterpart of the STFT which also maps the white noise to the planar GAF.

\paragraph{A discrete STFT and Charlier polynomials.}
Let $\N:=\{0,1,2,\ldots\}$. Having in mind \eqref{planarGAF}, a natural candidate is  the transform given by,
 for any $f\in \ell^2(\N,\C)$  and $z\in\C$,
\eq
\label{e:charlierTransform}
\ell_\mathbb{C}f(z) := \sum_{x\in\N}\overline{f(x)} \frac{z^x }{\sqrt{x!}}\,.
\qe

Indeed, it is a direct consequence of the general Theorem \ref{t:main} that, if $\xi$ is a white noise associated with the canonical basis $f_k(\ell):=\delta_{k\ell}$, then $\ell_\mathbb{C}\xi= \GAF_\C^{(1)}$ in law. Moreover, as for the Bargmann transform, $\ell_\C$ maps the canonical basis $f_k$ to the monomials $z^k/\sqrt{k!}$. From a signal processing perspective, this means that signals perfectly localized in time --members of the canonical basis-- will map to monomials. In the STFT case, the signals that mapped to monomials were the Hermite functions, which are better localized in frequency. An intuitive equivalent in the discrete case would be to replace Hermite polynomials by discrete orthogonal polynomials, which would play the role of elementary waveforms. Our next result shows that the role of the Hermite functions can indeed be played by Charlier functions.

More precisely, for $a>0$, let $(f_k)$ be the Charlier functions \eqref{e:Charlierfunction} coming with the Charlier polynomials $C_k^{(a)}$, that is, the discrete orthogonal polynomials with respect to the Poisson distribution of parameter $a$. The precise definition of Charlier functions is given later in \eqref{orthoCharlier}, but to gain intuition, we plot in Figure~\ref{f:charlier} the first Charlier functions $(f_k)_{k=0,\dots,4}$ for $a=5$. The functions oscillate more as $k$ grows, as expected.


\begin{theorem}
\label{t:planarl}
If $\xi$ is the white noise on $\ell^2(\N,\C)$ associated with the Charlier functions $(f_k)$ of parameter $a>0$, as defined in Proposition \ref{p:white}, then
$$
\ell_{\mathrm{\C}}\xi \law  \GAF_\C^{(1)}.
$$
In particular, the zeros of $\ell_{\C}\xi$ have the same law as the zeros of $\GAF_\C^{(1)}$ and this law is independent of $a$. Moreover, for any $k\in\N$,
we have 
$$
\ell_{\mathrm{\C}}f_k(z)=\e^{a/2-\sqrt a z}\frac{(\sqrt a-z)^k}{\sqrt{k!}}.
$$
\end{theorem}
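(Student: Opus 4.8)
The plan is to pin down the entire functions $\ell_\C f_k$ in closed form first, and then read off both the distributional identity $\ell_\C\xi\law\GAF_\C^{(1)}$ and its independence of $a$ as nearly immediate consequences. For the closed form, recall (see \eqref{orthoCharlier}) that the Charlier function is $f_k(x)=c_k\,C_k^{(a)}(x)\sqrt{\mu_a(x)}$, where $\mu_a(x)=\e^{-a}a^x/x!$ is the Poisson$(a)$ weight, $C_k^{(a)}$ is the Charlier polynomial, and $c_k=\sqrt{a^k/k!}$ is the constant making $\|f_k\|_{\ell^2(\N,\C)}=1$. Since $f_k$ is real-valued,
\[
\ell_\C f_k(z)=\sum_{x\in\N}f_k(x)\,\frac{z^x}{\sqrt{x!}}=c_k\,\e^{-a/2}\sum_{x\in\N}C_k^{(a)}(x)\,\frac{(\sqrt a\,z)^x}{x!},
\]
and since $|C_k^{(a)}(x)|$ grows only polynomially in $x$ the series converges absolutely for every $z\in\C$, so $\ell_\C f_k\in\mathcal A(\C)$. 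Everything thus reduces to evaluating $S_k(t):=\sum_{x\in\N}C_k^{(a)}(x)\,t^x/x!$. I would compute this either by expanding $C_k^{(a)}(x)=\sum_{j=0}^k\binom kj\binom xj j!\,(-1/a)^j$ and interchanging the two sums via $\sum_{x\ge j}\binom xj t^x/x!=t^j\e^t/j!$, or --- more conceptually --- by using the self-duality $C_k^{(a)}(x)=C_x^{(a)}(k)$ of Charlier polynomials at nonnegative integer arguments together with the classical generating function $\sum_{n\in\N}C_n^{(a)}(y)\,t^n/n!=\e^t(1-t/a)^y$. Either route gives $S_k(t)=\e^t(1-t/a)^k$; substituting $t=\sqrt a\,z$, writing $(1-z/\sqrt a)^k=a^{-k/2}(\sqrt a-z)^k$ and noting $c_k\,a^{-k/2}=1/\sqrt{k!}$ yields the announced closed form $\ell_\C f_k(z)=\varphi_a(z)\,(\sqrt a-z)^k/\sqrt{k!}$ with the non-vanishing entire prefactor $\varphi_a(z)=\e^{\sqrt a z-a/2}$.

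Given this, the distributional identity follows quickly. Writing the white noise formally as $\xi=\sum_k\xi_k f_k$ and using that the $f_k$ are real, $\ell_\C\xi=\sum_k\overline{\xi_k}\,\ell_\C f_k=\varphi_a(z)\sum_k\overline{\xi_k}\,(\sqrt a-z)^k/\sqrt{k!}$; since $(\overline{\xi_k})$ is again an i.i.d.\ $N_\C(0,1)$ sequence, this has the same law as $\varphi_a(z)\,\GAF_\C^{(1)}(\sqrt a-z)$, that is, $\varphi_a$ times the planar GAF precomposed with the affine planar isometry $z\mapsto\sqrt a-z$. A one-line computation of the covariance kernel then closes the argument: $\sum_k\ell_\C f_k(z)\overline{\ell_\C f_k(w)}=\varphi_a(z)\overline{\varphi_a(w)}\,\e^{(\sqrt a-z)(\sqrt a-\overline w)}$, and expanding $(\sqrt a-z)(\sqrt a-\overline w)=a-\sqrt a z-\sqrt a\overline w+z\overline w$ shows this collapses to $\e^{z\overline w}$, the covariance \eqref{e:covPlanar} of $\GAF_\C^{(1)}$; hence $\ell_\C\xi\law\GAF_\C^{(1)}$. (Equivalently, this is the instance of the template Theorem~\ref{t:main} obtained by feeding in the Charlier family.) Since the resulting kernel $\e^{z\overline w}$ retains no dependence on $a$, the law of $\ell_\C\xi$, and in particular the law of its zero set, is independent of $a$. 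Along the way one also checks condition \eqref{CKweak}, here $\sum_k|\ell_\C f_k(z)|^2=|\varphi_a(z)|^2\e^{|\sqrt a-z|^2}$, which is locally bounded, so all the series involved are genuine GAFs in the sense of \eqref{canGAF}.

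The arithmetic above is routine; the genuinely delicate point is conceptual rather than computational. Since $\xi$ is a white noise on an infinite-dimensional space, it is \emph{not} an element of $\ell^2(\N,\C)$, so the expansion $\xi=\sum_k\xi_k f_k$ and the termwise application of $\ell_\C$ must be made rigorous within the abstract Wiener space framework of Section~\ref{s:white}. I would quote Proposition~\ref{p:white} and the template Theorem~\ref{t:main} for precisely this purpose, so that the formal manipulations become legitimate statements about a bona fide random analytic function. The only remaining nuisance is the bookkeeping of the several $a$-dependent Charlier normalization constants, which must cancel exactly in order to produce the clean prefactor $\varphi_a$.
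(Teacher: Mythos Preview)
Your proof is correct and follows essentially the same route as the paper's: both hinge on the Charlier generating function \eqref{genCharlier}, the template Theorem~\ref{t:main}, and a final covariance computation showing that $\e^{-a/2+\sqrt a z}\GAF_\C^{(1)}(\sqrt a-z)\law\GAF_\C^{(1)}(z)$. The only cosmetic difference is the order of summation: the paper first sums over the polynomial index~$k$ to identify the kernel $T(x,z)$ and then relates the abstract template transform $\L$ to $\ell_\C$ via the change of variables $z\mapsto\sqrt a-z$, whereas you compute $\ell_\C f_k$ directly by summing over~$x$ and appeal to the self-duality $C_k^{(a)}(x)=C_x^{(a)}(k)$ to land on the very same generating identity. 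One small bookkeeping remark: to invoke Theorem~\ref{t:main} you need the stronger hypothesis~\eqref{Ass+} (with the extra factor $1+k^2$), not merely \eqref{CKweak}; this is just as immediate here, since $\sum_k(1+k^2)|\sqrt a-z|^{2k}/k!$ is locally bounded, but it is worth stating explicitly.
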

Theorem \ref{t:planarl} is proven in Section \ref{sec:proofCharlier}.

For now, let us interpret the transform $\ell_\C$ from a time-frequency perspective: taking $z=r\e^{i\theta}$, the definition becomes
\beq
\ell_\mathbb{C} f(z) = \sum_{x\in\N}\overline{f(x)}\frac{r^x}{\sqrt{x!}} \,\e^{ix\theta},
\label{e:cmp}
\eeq
which is a short-time discrete Fourier transform (STDFT), amenable to fast Fourier transforms. The window of this STDFT is the so-called Conway--Maxwell--Poisson distribution of parameters $r$ and $1/2$. This is a unimodal distribution, akin to an overdispersed Poisson distribution, with mode $\lfloor r^{2}\rfloor$. In the end, $r$ in \eqref{e:cmp} plays the role of the mean of the shifted Gaussian in the STFT with Gaussian window, and the squared modulus of \eqref{e:cmp}, as a function of ``time'' $r$ and ``frequency'' $\theta$, is a spectrogram with a window sliding quadratically with time. An additional difference with the classical STFT is that the time-frequency interpretation of the complex plane is now thought in terms of polar coordinates.

Figure~\ref{f:planarDemo} shows a realization of the zeros of the planar GAF, approximated, for each $r$, by truncating the series \eqref{e:cmp} and computing the corresponding discrete Fourier transform. The zeros are \emph{very} uniformly spread across the complex plane, as if repelling each other, a phenomenon that has several probabilistic consequences, see e.g. \cite{HKPV09}. In Figure~\ref{f:planarDemo_detection}, we show the effect of adding a Charlier function with a large real coefficient to a complex white noise before taking the transform. The time-frequency support of the signal is a centered annulus, as expected from Theorem~\ref{t:planarl}, thus perfectly mimicking the behavior of Hermite functions and the Gabor transform \citep{Fla13}. Zeros are pushed away from the support of the signal, an observation which is at the basis of the reconstruction approaches in \citep{Fla15,BaFlCh18}. Note how a few zeros are trapped inside the annulus.

\begin{figure}
\centering
\subfigure[Charlier functions]{
  \includegraphics[width=\twofig]{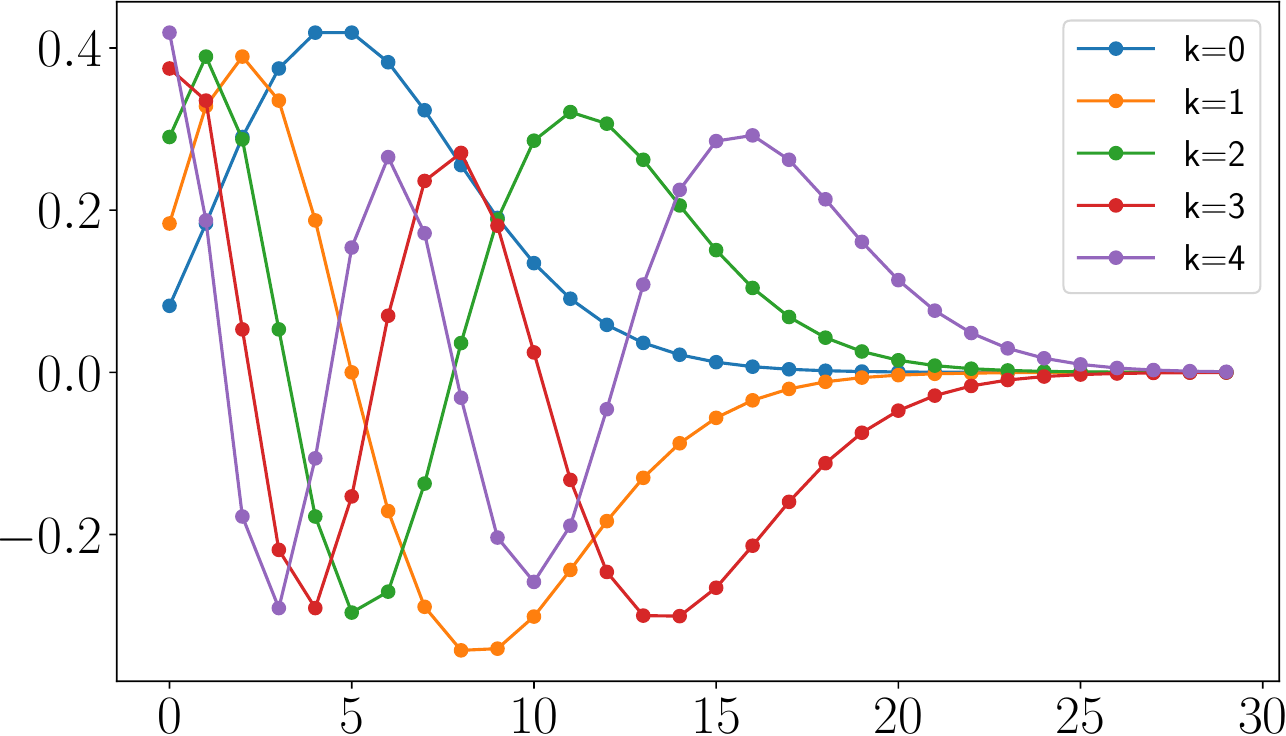}
  \label{f:charlier}
  }
\subfigure[Krawtchouk functions]{
  \includegraphics[width=\twofig]{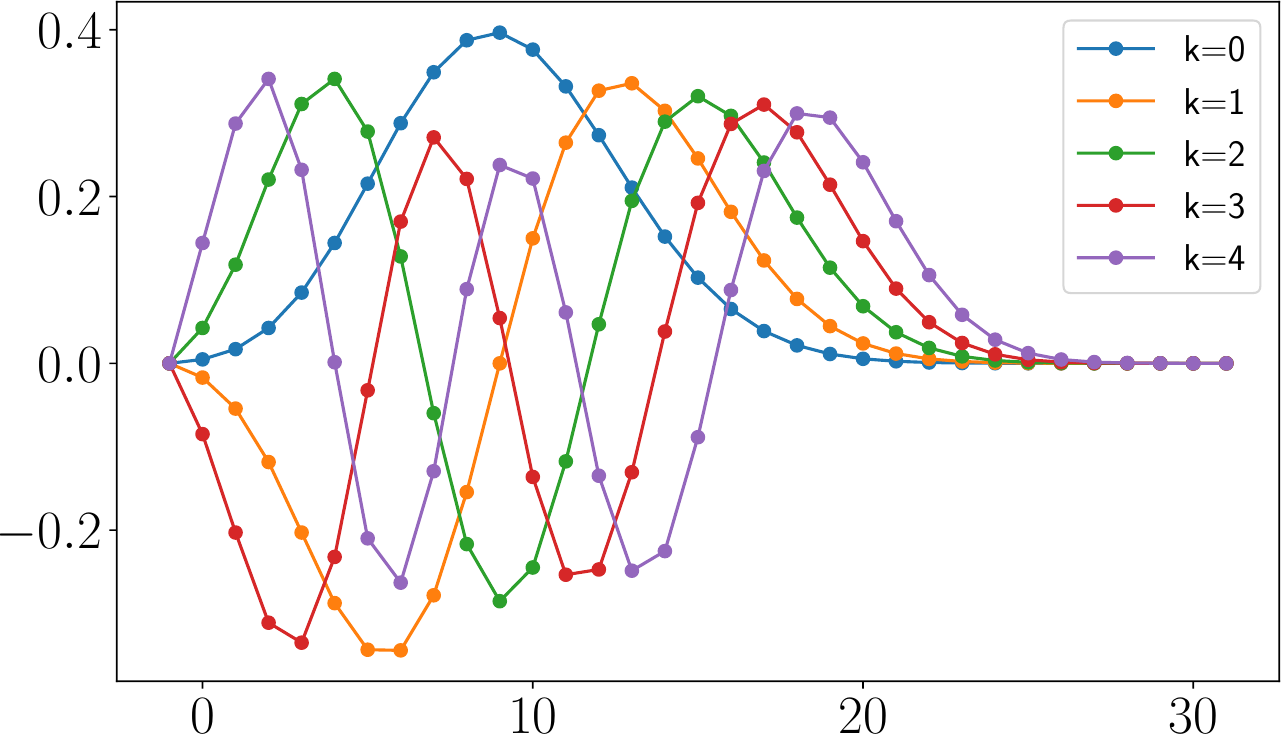}
  \label{f:kravchuk}
  }\\
\subfigure[Inverse Laguerre functions]{
  \includegraphics[width=\twofig]{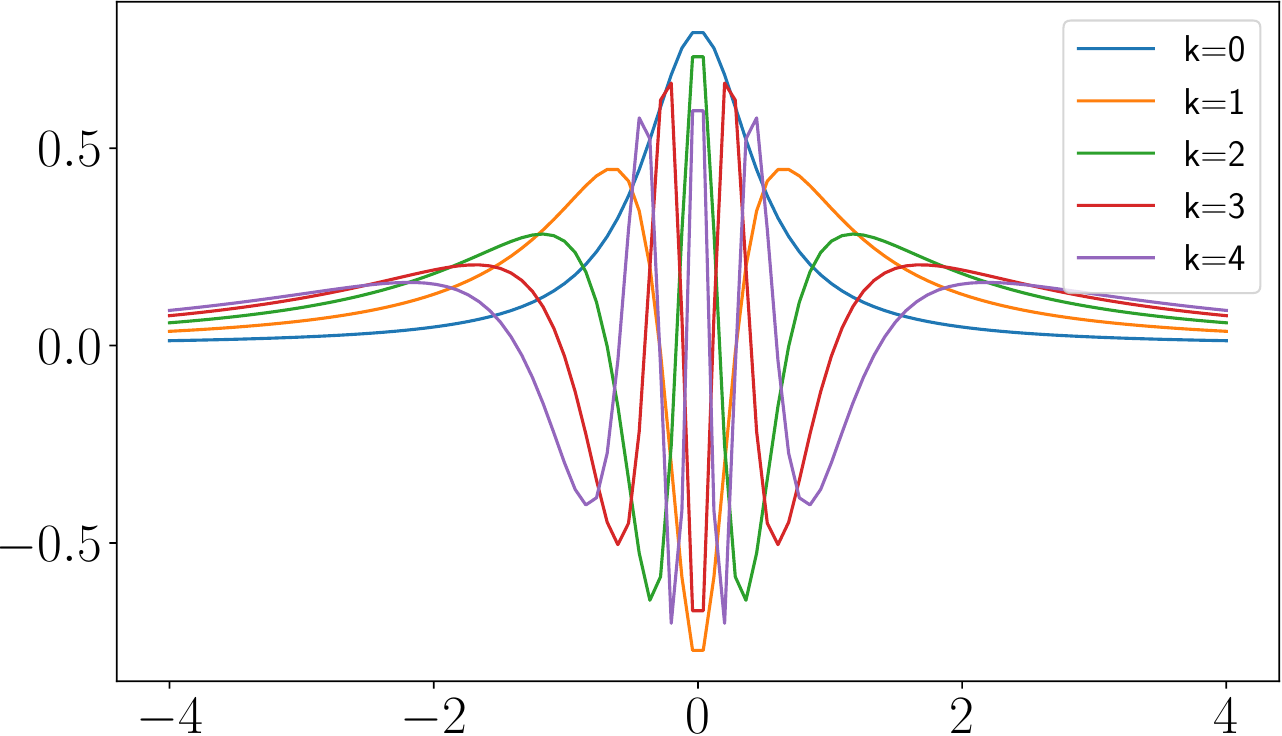}
  \label{f:inverseLaguerre}
  }
\caption{The first few terms of some of the orthonormal bases used in this paper.}
\end{figure}

\begin{figure}
\centering
\subfigure[Charlier]{
  \includegraphics[width=\twofig]{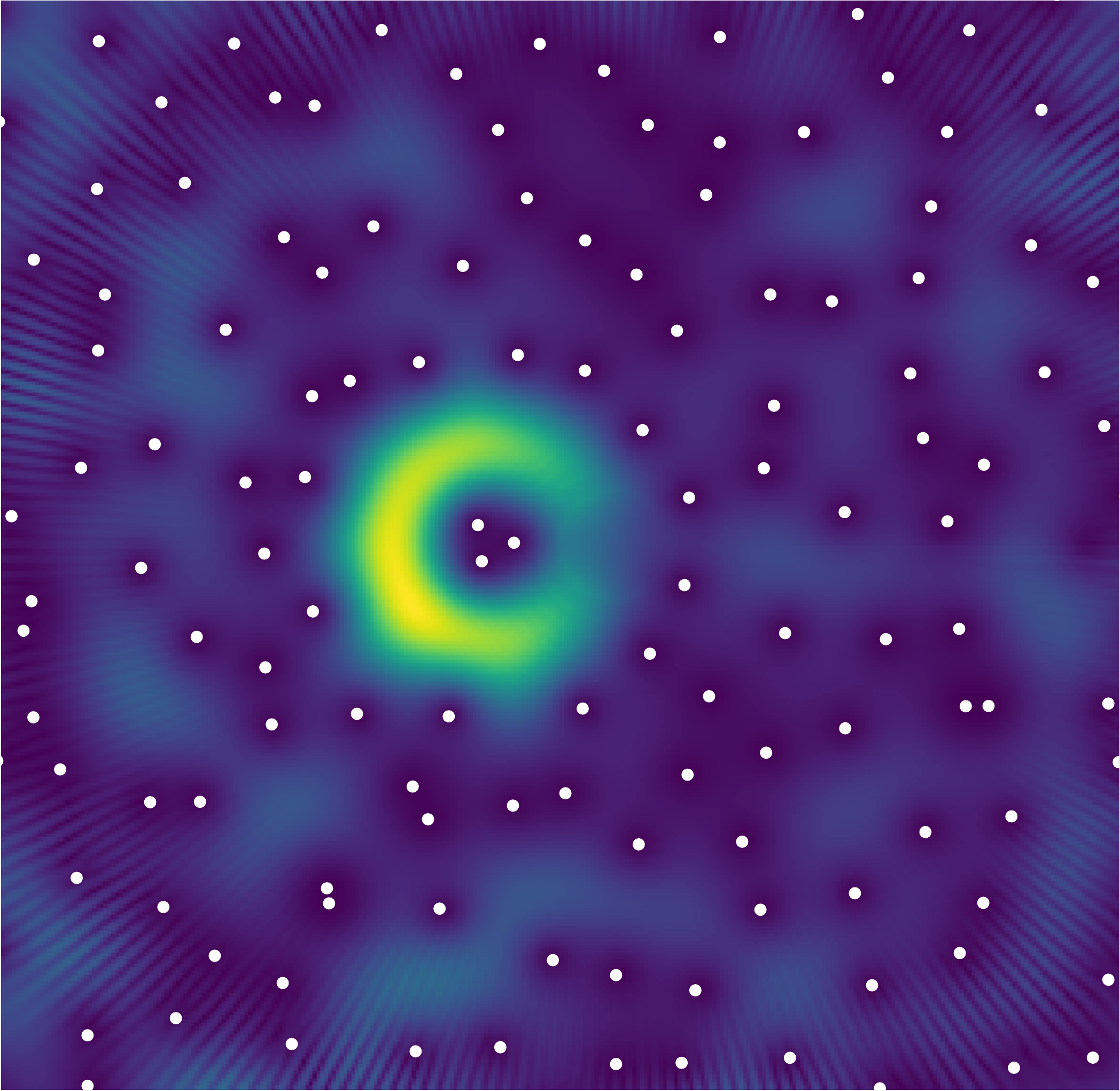}
  \label{f:planarDemo_detection}
  }
\subfigure[Krawtchouk]{
  \includegraphics[width=\twofig]{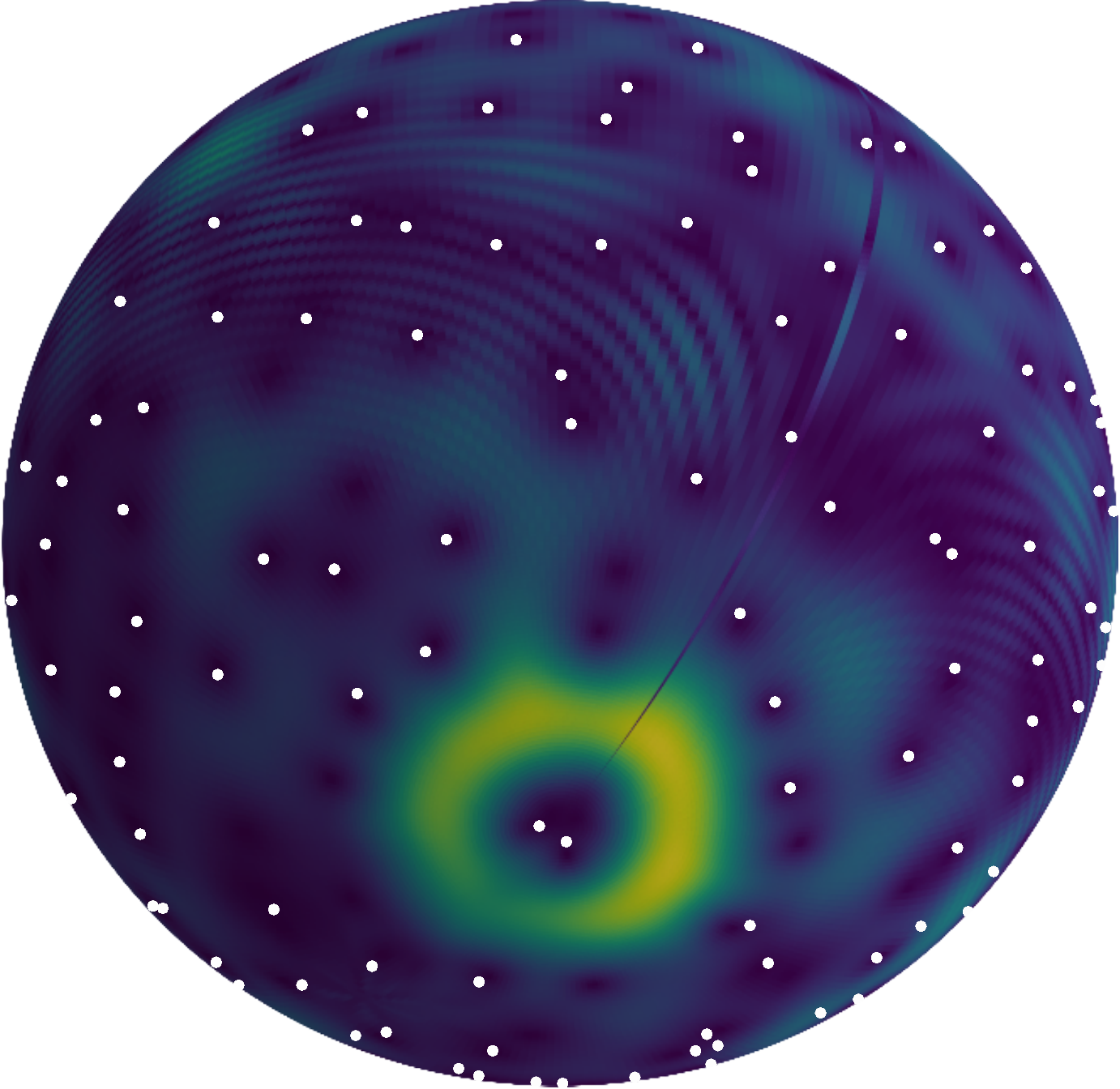}
  \label{f:sphericalDemo_detection}
  }
\subfigure[Inverse Fourier of Laguerre]{
  \includegraphics[width=\textwidth]{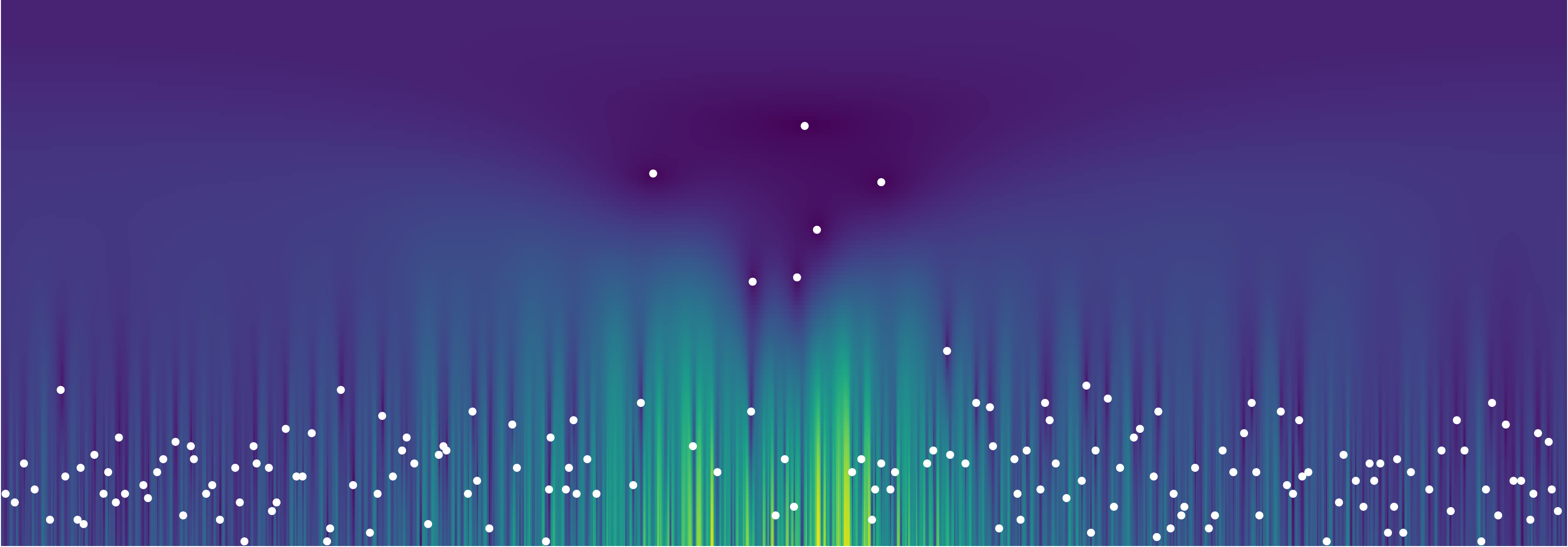}
  \label{f:hyperbolicDemo_detection}
  }
\caption{Example spectrograms/scalograms of signals containing one basis function $f_k$ hidden in white noise. The transforms used are the same as in Figure~\ref{f:gafs}. The noiseless signal is proportional to \ref{f:planarDemo_detection} a Charlier function, \ref{f:sphericalDemo_detection} a Krawtchouk function, and \ref{f:hyperbolicDemo_detection} the inverse Fourier transform of a Laguerre function.}
\label{f:gafs_detection}
\end{figure}

\subsubsection{The hyperbolic GAF}
\label{sec:hypGAF}
The \emph{hyperbolic GAF} of parameter $\alpha>-1$ is defined on $\Lambda=\mathbb D$ by
\eq
\label{hyperbolicGAF}
\GAF^{(\alpha)}_{\mathbb D}(z):=\sum_{k=0}^\infty \xi_k \,\sqrt{\frac{\Gamma(k+\alpha+1)}{k!}} z^k ,
\qe
where $(\xi_k)_{k\in\N}$ is a sequence of i.i.d. $N_\C(0,1)$ random variables. Its covariance kernel is given by
$$
\E\big[\GAF^{(\alpha)}_{\mathbb D}(z)\overline{\GAF^{(\alpha)}_{\mathbb D}(z)}\big]= \frac1{(1-z\overline{w})^{\alpha+1}}.
$$
where, when $\alpha$ is not an integer, we pick the branch of the logarithm so that $(1-|z|^2)^{\alpha+1}>0$.
The zeros of \eqref{hyperbolicGAF} are invariant under the isometries of $(\mathbb D, (1-|z|^2)^{-2}g_\mathrm{flat})$, which consist in the homographies $T:\mathbb D\to\mathbb D$,
\eq
\label{isomD}
T(z)= \frac{a z+b}{\overline b z+\overline a},\qquad  a,b\in\C,\qquad |a|^2-|b|^2=1.
\qe
\cite{PeVi05} showed that the zeros of the hyperbolic GAF of parameter $\alpha=0$ further have a special structure: they form a \emph{determinantal point process} (DPP) with Lebesgue reference measure and kernel the \emph{Bergman kernel} of $\mathbb D$,
\eq
\label{e:bergmanKernel}
K_\D(z,w):=\frac{1}{\pi(1-z\overline w)^2}.
\qe
In short, this means that the joint intensities \eqref{e:correlationIntensities} of the zeros of $\GAF_\mathbb{D}^{(0)}$ are explicit for any $k\geq 1$ and read
\eq
\d\rho_k(z_1,\ldots,z_k)=\det\Big[K_\D(z_i,z_j)\Big]_{i,j=1}^k\prod_{i=1}^k\bs 1_{\mathbb D}(z_k)\d^2z_k
\label{e:bergmanDPP}
\qe
where $\d^2z$ stands for the Lebesgue measure on $\C$. For more about determinantal point processes, we refer the reader to \citep{HKPV06,LaMoRu14}. As a side note, we add that $K_\D$ is the reproducing kernel of the Bergman space $A^2(\D):=\mathcal{A}(\D)\cap L^2(\d^2 z)$.

\paragraph{Daubechies-Paul wavelets and Laguerre polynomials.}
In our next Theorem~\ref{t:Hyp}, we link the hyperbolic GAF to a classical wavelet transform on analytic signals, known as the Cauchy or Daubechies-Paul wavelet transform after \citep{DaPa88}. More precisely, let $H^2(\mathbb{R})$ be the Hilbert space of analytic signals
$$
H^2(\R) := \{f\in L^2(\mathbb{R},\mathbb{C}) :\; \text{Supp}(\hat{f}) \subset \mathbb{R}_+ \},
$$
equipped with usual $L^2$ inner product $\langle f,g\rangle=\int \overline f g\,\d x$.  We refer to e.g. \cite[Theorem 5.9.5]{Sim153} for basic properties of $H^2(\R)$. We defined here the Fourier transform as $\hat{f}(\omega) = (2\pi)^{-1/2} \int \overline {f}(t)\e^{-\i\omega t}\d t$. 
 Now, let $\beta>-1/2$ and set
$$ \psi_\beta(t) := \frac{1}{(t+\i)^\beta},\quad t\in\mathbb{R}.$$
The Daubechies-Paul wavelet transform of $f\in H^2(\R)$ is defined  as
\eq
W_\beta f(u,s) := \langle f, T_u D_s \psi_\beta\rangle,
\label{e:pdwt}
\qe
where $u\in\mathbb{R}$ and $s\in\mathbb{R}_+^*$ are thought of as time and scale, $T_u f (t) := f(t-u)$ and $D_s f(t):= s^{-1/2} f(t/s)$ are the usual translation and dilation operators. The role of the Bargmann transform for the STFT is played here by the so-called Bergman transform of the unit disk $\D$, as investigated by \cite{Abr08}. In short, passing to the Fourier domain and using the identity
\eq
\label{inFourierPsiBeta}
 \frac{1}{\sqrt{2\pi}} \int_{\mathbb{R}_+} x^\beta \e^{-x}\e^{\i tx} \d x = -\frac {1}{\sqrt{2\pi}} \frac{(-\i)^{\beta+1}}{(t+\i)^{\beta+1}}\Gamma(\beta+1),
 \qe
the transform in \eqref{e:pdwt} can be rewritten as
\eq
\label{e:DPtransform}
W_\beta f(-u,s)= -\frac{\i^{\beta+1}s^{\beta+1/2}\sqrt{2\pi}}{\Gamma(\beta+1)} \left(\frac12-\i (u+\i s)\right)^{-2\beta-1} \L_\D^{(\beta)}f(\phi(u+\i s)),
\qe
where $u+\i s\in\mathbb{C}_+$  and we defined
\eq
\L_\D^{(\beta)}f(z) := \frac1{(1-z)^{2\beta+1}} \int_{\R_+} \overline{\hat f(x)}\, x^{\beta}\e^{-\frac x 2\tfrac{1+z}{1-z}} \d x, \quad z\in\D,
\label{e:bergmanTransform}
\qe
as well as $\phi(w) := (2w-\i)/(2w+\i)$, which is a conformal mapping from $\mathbb{C}_+$ to $\mathbb{D}$.
We note that writing $z=\phi(w)$, our transform \eqref{e:bergmanTransform} becomes
$$\L_\D^{(\beta)}f(\phi(w)) = \left(\frac12-\i w\right)^{2\beta+1}\int _{\mathbb{R}_+} \overline{\hat f(x)} x^{\beta}\e^{\i w x}\d x,$$
and we recover the Bergman transform of the disk, as in \citep{Abr08}, up to a nonvanishing factor that we add for later mathematical convenience.

\begin{theorem} Let $\beta>-1/2$ and $(f_k)$ be the basis of $H^2(\mathbb{R})$ consisting of the inverse Fourier transforms of the Laguerre functions with parameter $2\beta$, see \eqref{LagONB}. Let $\xi$ be the white noise on $H^2(\mathbb{R})$ associated with the basis $(f_k)$, as defined in Proposition~\ref{p:white}. Then
$$
\L_\D^{(\beta)}\xi \law \GAF_{\mathbb D}^{(2\beta)},
$$
and
$$
\L_\D^{(\beta)}f_k(z) = \sqrt{\frac{\Gamma(k+2\beta+1)}{k!}}z^k.
$$
In particular, the zeros of the Daubechies-Paul wavelet transform of white noise $z=u+\i s\mapsto W_\beta\xi(-u,s)$ are invariant under the isometries of the Poincar\'{e} half-plane $\C_+$. Moreover, when $\beta=0$, the zeros form a determinantal point process  on $\C_+$ with Lebesgue reference measure and kernel the Bergman kernel of $\C_+$,
\eq
\label{e:bergmanKernelC+}
K_{\C_+}(z,w):=\frac{-1}{\pi(z-\overline w)^2}\,,\qquad z,w\in\C_+.
\qe
\label{t:Hyp}
\end{theorem}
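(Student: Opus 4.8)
The plan is to obtain the statement as an instance of the template Theorem~\ref{t:main}, so that the real work consists of a single explicit computation together with some bookkeeping to carry the conclusion over from $\D$ to $\C_+$. To apply Theorem~\ref{t:main} I would first check its structural hypotheses. The Laguerre functions of parameter $2\beta$ are an orthonormal basis of $L^2(\R_+,\d x)$, and since the Fourier transform used here is, up to a harmless conjugation, a unitary map onto $H^2(\R)$, the inverse Fourier transforms $(f_k)$ form an orthonormal basis of $H^2(\R)$. The transform $\L_\D^{(\beta)}$ of \eqref{e:bergmanTransform} is antilinear in $f$ and depends on $f$ only through $\hat f$, hence only through the coordinates $\langle f_k,f\rangle$, so it is of the type treated by the template: expand on the basis, then replace each $f_k$ by the analytic function $\L_\D^{(\beta)}f_k$. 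Writing $\Psi_k(z):=\L_\D^{(\beta)}f_k(z)$, the computation below gives $\Psi_k(z)=\sqrt{\Gamma(k+2\beta+1)/k!}\,z^k$; since $\Gamma(k+2\beta+1)/k!\sim k^{2\beta}$ as $k\to\infty$, the series $\sum_k|\Psi_k(z)|^2$ converges uniformly on compacts of $\D$, so \eqref{CKweak} holds. Granting this, Theorem~\ref{t:main} yields both $\L_\D^{(\beta)}\xi\law\sum_{k\ge 0}\xi_k\sqrt{\Gamma(k+2\beta+1)/k!}\,z^k=\GAF_{\mathbb D}^{(2\beta)}$ and the formula for $\L_\D^{(\beta)}f_k$.

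The computational heart is the identity $\L_\D^{(\beta)}f_k(z)=\sqrt{\Gamma(k+2\beta+1)/k!}\,z^k$. With $\hat f_k(x)=\sqrt{k!/\Gamma(k+2\beta+1)}\,L_k^{(2\beta)}(x)\,x^{\beta}\e^{-x/2}$, which is real-valued, substitution in \eqref{e:bergmanTransform} merges the two factors $x^{\beta}$ into $x^{2\beta}$ and, using $1+\tfrac{1+z}{1-z}=\tfrac{2}{1-z}$, merges the two exponentials into $\e^{-x/(1-z)}$, leaving
\[
\L_\D^{(\beta)}f_k(z)=\frac{1}{(1-z)^{2\beta+1}}\sqrt{\frac{k!}{\Gamma(k+2\beta+1)}}\int_0^\infty x^{2\beta}\e^{-sx}L_k^{(2\beta)}(x)\,\d x,\qquad s:=\frac{1}{1-z}.
\]
Integrating the Laguerre generating function $\sum_k L_k^{(\alpha)}(x)r^k=(1-r)^{-\alpha-1}\e^{-xr/(1-r)}$ against $x^{\alpha}\e^{-sx}$ gives $\int_0^\infty x^{2\beta}\e^{-sx}L_k^{(2\beta)}(x)\,\d x=\tfrac{\Gamma(k+2\beta+1)}{k!}(s-1)^k s^{-k-2\beta-1}$, and since $s-1=z/(1-z)$ and $s^{-k-2\beta-1}=(1-z)^{k+2\beta+1}$ (principal branch, unambiguous as $\re(1-z)>0$ on $\D$), the prefactor $(1-z)^{-2\beta-1}$ cancels and we obtain exactly $\sqrt{\Gamma(k+2\beta+1)/k!}\,z^k$.

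It remains to transfer this to $W_\beta$. By \eqref{e:DPtransform}, the function $w=u+\i s\mapsto W_\beta\xi(-u,s)$ equals a prefactor that is non-vanishing on $\C_+$ — since $s>0$, $\re(\tfrac12-\i w)=\tfrac12+s>0$ and $\Gamma(\beta+1)\neq0$ — times $\L_\D^{(\beta)}\xi(\phi(w))$, where $\phi$ is a conformal bijection $\C_+\to\D$; hence the zero set in $\C_+$ of $u+\i s\mapsto W_\beta\xi(-u,s)$ has the same law as $\phi^{-1}$ of the zero set of $\GAF_{\mathbb D}^{(2\beta)}$. The zeros of $\GAF_{\mathbb D}^{(2\beta)}$ are invariant in law under the disk isometries \eqref{isomD}, and conjugating these by $\phi$ gives precisely the Möbius automorphisms of $\C_+$, i.e.\ the isometries of the Poincaré half-plane, proving the invariance claim. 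For $\beta=0$, \cite{PeVi05} identify the zeros of $\GAF_{\mathbb D}^{(0)}$ with the DPP on $\D$ with Lebesgue reference measure and kernel $K_\D$; pushing forward by $\phi^{-1}$ and combining the change-of-variables formula for the intensities \eqref{e:correlationIntensities} with the conformal covariance $K_\D(\phi(w),\phi(w'))\,\phi'(w)\,\overline{\phi'(w')}=K_{\C_+}(w,w')$ shows the zeros of $u+\i s\mapsto W_0\xi(-u,s)$ form the DPP on $\C_+$ with kernel \eqref{e:bergmanKernelC+}. I expect no deep obstacle once Theorem~\ref{t:main} is available: the most involved steps are the Laguerre integral (elementary but requiring care with the branches of $(1-z)^{2\beta+1}$) and, for $\beta=0$, correctly combining the Jacobian of $\phi$ with the Bergman kernel's conformal covariance; the genuinely delicate conceptual point — that white noise does not live in $H^2(\R)$, so $\L_\D^{(\beta)}\xi=\sum_k\xi_k\L_\D^{(\beta)}f_k$ cannot simply be read off term by term — is precisely what the abstract Wiener space construction of Section~\ref{s:white} and Theorem~\ref{t:main} are designed to absorb.
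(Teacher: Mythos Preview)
Your proposal is correct and follows essentially the same route as the paper: both arguments identify $\Psi_k(z)=\sqrt{\Gamma(k+2\beta+1)/k!}\,z^k$ via the Laguerre generating function, apply Theorem~\ref{t:main}, and then push the Peres--Vir\'ag DPP through the conformal map $\phi$. One small slip: Theorem~\ref{t:main} requires the stronger hypothesis \eqref{Ass+} (with the extra factor $(1+k^2)$), not just \eqref{CKweak}; this of course also holds here since $|z|^{2k}$ decays geometrically on compacts of $\D$. Your transfer of the DPP via the clean identity $K_\D(\phi(z),\phi(w))\,\phi'(z)\,\overline{\phi'(w)}=K_{\C_+}(z,w)$ is in fact slightly tidier than the paper's version, which first writes the pushed-forward kernel with $|\phi'(z)||\phi'(w)|$ and then discards a unimodular factor $\eta(z)/\eta(w)$.
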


The proof of Theorem~\ref{t:Hyp} is provided in Section~\ref{s:analyticWavelets}.

A sample of the zeros of the hyperbolic GAF is shown in Figure~\ref{f:hyperbolicDemo}, which we obtained by truncating and discretizing the Daubechies-Paul wavelet transform of a truncated analytic white noise. We discuss the effect of truncation in Section~\ref{s:computation}, see also \cite[Section 5.1]{BaFlCh18}. The first few functions $(f_k)$ of our basis of $H^2(\mathbb{R})$ are plotted in Figure~\ref{f:inverseLaguerre}. The effect of a single $f_k$ with a real coefficient being added to an analytic white noise is shown in Figure~\ref{f:hyperbolicDemo_detection}. Note how a few zeros are trapped in a hyperbolic triangle, similarly to the zeros trapped inside the circular support of the signal in Figure~\ref{f:planarDemo_detection}. We investigate this trapping effect more formally in Section~\ref{s:finiteApproximationTransform}.

While we were preparing this manuscript, a similar result to Theorem~\ref{t:Hyp} was independently arXived by \cite*{AHKR18Sub}, with a different definition of white noise. Indeed, the white noise used in their paper is defined as a Gaussian space indexed by $L^2$ functions. In contrast, our white noise is defined as a  random variable living on functions that are slightly less regular than those in $H^2(\R)$, see Section~\ref{Supp2}. Our white noise may thus come closer to what signal processers call an analytic white noise. This answers a question raised in \citep[Section 4.4]{BaFlCh18}, see also the original construction of the analytic white noise by \cite{Pug82}. Moreover, our construction of white noise yields a precise definition of its transform as a natural extension, see Section~\ref{s:white}.

\paragraph{A discrete STFT and Meixner polynomials.}

Similarly to what the windowed discrete Fourier transform \eqref{e:cmp} is to the STFT, we also found a discrete counterpart to Theorem~\ref{t:Hyp}. For $f\in\ell^2(\mathbb{N},\mathbb{C})$, $z\in\mathbb{D}$ and $\beta>-1/2$, let
\beq
\ell_\mathbb{D}^{(\alpha)} f(z)  := \sum_{x\in\N} \overline{f(x)} \sqrt{\frac{\Gamma(x+\alpha+1)}{x!}}\, z^x\,.
\eeq
From a time-frequency perspective, we let $z=r\e^{\i\theta}$ with $r<1$, and obtain another windowed discrete Fourier transform
\beq
\ell_\mathbb{D}^{(\alpha)} f(z)  = \sum_{x\in\N} \overline{f(x)} \sqrt{\frac{\Gamma(x+\alpha+1)}{x!}}\, r^x \e^{\i x\theta}\,,
\eeq
again computationally amenable to FFT. The window here can be interpreted as a Conway-Maxwell-negative binomial distribution. Note that when $\alpha=0$, the window is proportional to a simple geometric distribution with mean $1/r$.

As in the discussion that followed \eqref{e:charlierTransform}, $\ell_\mathbb{D}^{(\alpha)}$ maps the canonical basis of $\ell^2(\mathbb{\N})$ onto monomials. But as in Theorem~\ref{t:planarl},  we show in Theorem~\ref{t:hyp} that it is possible to rather map discrete orthogonal polynomials onto a simple class of polynomials.

More precisely, for $\alpha>-1$ and $0<c<1$. Let $(f_k)$ be the Meixner functions coming with the Meixner polynomials. The Meixner polynomials and functions are defined later in \eqref{e:meixnerFunction}, but for now we simply note that $(f_k)$ is a basis of $\ell^2(\mathbb{N},\mathbb{C})$ and that $f_k$ oscillates faster with $k$, similarly to Charlier functions in Figure~\ref{f:charlier}.

\begin{theorem} Let $\xi$ be a white noise on $\ell^2(\N,\C)$ associated with the Meixner functions $(f_k)$ of parameters $\alpha>-1$ and $0<c<1$, see Proposition~\ref{p:white}. We have,
\eq
\ell_\mathbb{D}^{(\alpha)}\xi \law  \GAF_{\mathbb D}^{(\alpha)}
\label{e:hypStatement}
\qe
and
$$
\ell_\mathbb{D}^{(\alpha)}f_k (z)=\sqrt{\frac{\Gamma(k+\alpha+1)}{k!}}(\sqrt c-z)^k(1-\sqrt cz)^{\alpha+1-k}.
$$
In particular, the zeros of $\ell_\D^{(0)} \xi$ form a determinantal point process on $\D$ with Lebesgue reference measure and kernel  $K_{\D}$ defined in \eqref{e:bergmanKernel}.
\label{t:hyp}
\end{theorem}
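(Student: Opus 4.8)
The plan is to recognize the claim as the instance of the template Theorem~\ref{t:main} attached to the Meixner polynomials, in the same way that Theorems~\ref{t:planarl} and \ref{t:Hyp} are its instances attached to the Charlier and to the inverse-Fourier Laguerre polynomials; so I would check the hypotheses of Theorem~\ref{t:main} for this choice and read off the conclusion. Two elementary facts underlie everything. First, the negative binomial series $\sum_{x\ge 0}\frac{\Gamma(x+\alpha+1)}{x!}t^{x}=\Gamma(\alpha+1)(1-t)^{-(\alpha+1)}$, valid for $|t|<1$: taking $t=|z|^{2}$ shows that $\Psi_{x}(z):=\sqrt{\Gamma(x+\alpha+1)/x!}\,z^{x}$ --- the image under $\ell_{\mathbb{D}}^{(\alpha)}$ of the $x$-th canonical basis vector of $\ell^{2}(\N,\C)$ --- satisfies \eqref{CKweak} on $\mathbb{D}$, and taking $t=z\overline w$ identifies $\sum_{x}\Psi_{x}(z)\overline{\Psi_{x}(w)}$ with the covariance of $\GAF_{\mathbb{D}}^{(\alpha)}$ (up to the constant $\Gamma(\alpha+1)$ fixed by the normalization in \eqref{hyperbolicGAF}). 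Second, the Meixner polynomials $M_{k}(\cdot\,;\beta,c)$ are self-dual, $M_{k}(x;\beta,c)=M_{x}(k;\beta,c)$ for $k,x\in\N$, and carry the generating function $\sum_{n\ge 0}\frac{(\beta)_{n}}{n!}M_{n}(x;\beta,c)\,t^{n}=(1-t/c)^{x}(1-t)^{-x-\beta}$.

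Concretely, I would first record that the Meixner functions $(f_{k})$ of \eqref{e:meixnerFunction} are the $M_{k}(\cdot\,;\beta,c)$ renormalized by the square root of the negative binomial weight $w(x)\propto(\beta)_{x}c^{x}/x!$ and by the Meixner $L^{2}$-norms, that they form an orthonormal basis of $\ell^{2}(\N,\C)$, and that the factor $\Gamma(x+\alpha+1)/x!$ appearing in $\ell_{\mathbb{D}}^{(\alpha)}$ forces the identification $\beta=\alpha+1$. The crux is then the explicit evaluation of $\ell_{\mathbb{D}}^{(\alpha)}f_{k}$: after collecting the Pochhammer symbols via $(\beta)_{x}=(\alpha+1)_{x}$ and the powers of $c$, one is left --- up to an explicit $k$-dependent constant --- with the sum $\sum_{x\in\N}M_{k}(x;\beta,c)\frac{(\beta)_{x}}{x!}(\sqrt c\,z)^{x}$, and by self-duality together with the generating function above (or, equivalently, a direct term-by-term summation of the hypergeometric series for $M_{k}$) this sum equals $(1-\sqrt c\,z/c)^{k}(1-\sqrt c\,z)^{-\beta-k}$ for $\sqrt c\,z\in\mathbb{D}$, that is, a power of $(\sqrt c-z)$ times a factor analytic and non-vanishing on $\mathbb{D}$; this gives the closed form for $\ell_{\mathbb{D}}^{(\alpha)}f_{k}$ in the statement. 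Since $(f_{k})$ and the canonical basis differ by a unitary matrix, the functions $\ell_{\mathbb{D}}^{(\alpha)}f_{k}$ automatically satisfy $\sum_{k}\ell_{\mathbb{D}}^{(\alpha)}f_{k}(z)\overline{\ell_{\mathbb{D}}^{(\alpha)}f_{k}(w)}=\sum_{x}\Psi_{x}(z)\overline{\Psi_{x}(w)}$, so Theorem~\ref{t:main} applies and yields $\ell_{\mathbb{D}}^{(\alpha)}\xi\law\GAF_{\mathbb{D}}^{(\alpha)}$, i.e.\ \eqref{e:hypStatement}.

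The main obstacle is the bookkeeping in this middle step: fixing the $L^{2}$-normalization of the Meixner functions so that the powers of $c$ from the weight and from the Meixner norms cancel correctly against the $\sqrt c$ entering through $t=\sqrt c\,z$, confirming $\beta=\alpha+1$, and simplifying $(1-\sqrt c\,z/c)^{k}(1-\sqrt c\,z)^{-\beta-k}$ into the announced form. A secondary point, already subsumed in Theorem~\ref{t:main} and the abstract Wiener space construction of Section~\ref{s:white}, is that $\ell_{\mathbb{D}}^{(\alpha)}$ must be extended from $\ell^{2}(\N,\C)$ to the slightly larger space supporting the white noise associated with $(f_{k})$ and that the resulting series defines an element of $\mathcal A(\mathbb{D})$; this is again \eqref{CKweak} on $\mathbb{D}$, provided by the negative binomial series.

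Finally, the last assertion is immediate: for $\alpha=0$ the function $\GAF_{\mathbb{D}}^{(0)}$ is the GAF whose zero set is, by \citep{PeVi05} as recalled around \eqref{e:bergmanDPP}, the determinantal point process on $\mathbb{D}$ with Lebesgue reference measure and kernel the Bergman kernel $K_{\mathbb{D}}$ of \eqref{e:bergmanKernel}; since $\ell_{\mathbb{D}}^{(0)}\xi\law\GAF_{\mathbb{D}}^{(0)}$ by the first part, the two random analytic functions have the same zero-set law, which is precisely this DPP.
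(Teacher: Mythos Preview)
Your proposal is correct and follows essentially the same route as the paper: both arguments apply Theorem~\ref{t:main} with the Meixner basis and rest on the Meixner generating function to close the computation. The only stylistic difference is in the last identification step: the paper expresses $\ell_\D^{(\alpha)}$ as $\L$ composed with the hyperbolic isometry $z\mapsto(\sqrt c-z)/(1-\sqrt c z)$ and a non-vanishing factor, then invokes the invariance of $\GAF_\D^{(\alpha)}$ under that isometry, whereas you match covariances directly via the unitarity of the change of basis from the canonical to the Meixner functions; these are two phrasings of the same calculation. One small point: to invoke Theorem~\ref{t:main} as stated you need the stronger condition \eqref{Ass+} (with the factor $1+k^2$), not just \eqref{CKweak}; this is immediate from your closed form for $\ell_\D^{(\alpha)}f_k$ since $|(\sqrt c-z)/(1-\sqrt c z)|$ is bounded away from $1$ on compacts of $\D$, but it is worth saying.
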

We prove Theorem~\ref{t:hyp} in Section~\ref{sec:proofMeixner}.

\subsubsection{The spherical GAF}
Finally, the \emph{spherical GAF} of parameter $N\in\{1,2,3,\ldots\}$ is defined on $\Lambda=\C$ by
\eq
\label{sphericalGAF}
\GAF^{(N)}_{\mathbb S}(z):=\sum_{k=0}^N \xi_k \,\sqrt{{N}\choose{k}} \,z^k ,
\qe
where $(\xi_k)_{k\in\N}$ is a sequence of i.i.d. $N_\C(0,1)$ random variables. The covariance structure reads
$$
\E\Big[\GAF^{(N)}_{\mathbb S}(z)\overline{\GAF^{(N)}_{\mathbb S}(z)} \,\Big]=(1-z\overline w)^N.
$$
Its zeros are invariant under the isometries of $(\mathbb C\cup\{\infty\}, (1+|z|^2)^{-2}g_\mathrm{flat})$, which are the maps $T:\C\cup\{\infty\}\to\C\cup\{\infty\}$,
\eq
\label{e:isomSphere}
T(z)=\frac{az+b}{-\overline{b}z+\overline{a}}, \qquad  a,b\in\C,\qquad |a|^2+|b|^2=1.
\qe

In the same spirit as the previous two discrete STFTs introduced for the planar and hyperbolic GAFs, we consider now the windowed discrete Fourier transform acting on $\C^{N+1}$ as
$$
\ell_{\mathbb S}^{(N)}f(z) := \sum_{x=0}^N \overline{f(x)} \sqrt{ {N \choose x}} \,z^x
$$
where $f=(f(x))_{x=0,\dots,N}\in \mathbb{C}^{N+1}$. By writing  $z=r\e^{\i \theta}\in\mathbb{C}$, the window here can be seen as proportional to a Conway-Maxwell-binomial distribution with parameters $\nu=1/2$ and $p=r/(1+r)$. This distribution is akin to an overdispersed binomial distribution, and it is unimodal with mode $(N+1)\lfloor r^2/(1+r^2)\rfloor$.

Clearly $\ell_{\mathbb S}^{(N)}$ maps the canonical basis to monomials and, if $\xi$ stands for the standard Gaussian variable on $\C^{N+1}$, which is defined unambiguously, then $\ell_{\mathbb S}^{(N)}\xi(z)\law \GAF_{\mathbb S}^{(N)}(z).$  Again, the next result shows the same holds true when replacing the canonical basis by the Krawtchouk functions. The Krawtchouk functions thus play here the role of the Hermite functions for the STFT with Gaussian window. Indeed, let $p\in(0,1)$ and define the Krawtchouk functions \eqref{e:fkKrawchouk} associated with the Krawtchouk polynomials --~the discrete orthogonal polynomials associated with the Binomial distribution $\mathcal B(N,p)$. To gain intuition on these  elementary waveforms, we plot the first Krawtchouk functions in Figure~\ref{f:kravchuk}.

\begin{theorem}
\label{t:spherical} Let $N\in\N$ and $\xi$ be the usual white noise on $\C^{N+1}$, that is a standard complex Gaussian random vector of dimension $N+1$.  We have,$$
\ell_{\mathbb S}^{(N)}\xi\law\GAF_{\mathbb S}^{(N)}\,.
$$
Moreover, if $(f_k)$ is the basis of Krawtchouk functions of parameter $N$ and $p\in(0,1)$, then
$$
\ell_{\mathbb S}^{(N)}f_k(z) = \sqrt{\frac1{(1+q^2)^N} {N \choose k}} (1-qz)^k(q+z)^{N-k},\qquad q:=\sqrt{\frac{1-p}{p}}.
$$
\end{theorem}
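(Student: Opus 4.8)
The plan is to treat the two assertions separately. The distributional identity $\ell_{\mathbb S}^{(N)}\xi\law\GAF_{\mathbb S}^{(N)}$ is essentially immediate: it is the finite-dimensional instance of the template Theorem~\ref{t:main}, but it can also be seen directly. Working in the canonical orthonormal basis $(e_x)_{x=0}^N$ of $\C^{N+1}$ one has $\ell_{\mathbb S}^{(N)}e_x(z)=\sqrt{\binom Nx}\,z^x$, and since $\xi$ is a standard complex Gaussian vector, $\ell_{\mathbb S}^{(N)}\xi(z)=\sum_{x=0}^N\overline{\xi(x)}\,\sqrt{\binom Nx}\,z^x$ has i.i.d.\ $N_\C(0,1)$ coefficients $(\overline{\xi(x)})_{x=0}^N$; equivalently, its covariance kernel is $\sum_{x=0}^N\binom Nx(z\overline w)^x=(1+z\overline w)^N$, which is exactly the covariance kernel of $\GAF_{\mathbb S}^{(N)}$, and a centered complex Gaussian process is determined by its covariance. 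As a consistency check, the family $(g_k)$ in the statement satisfies the same identity, $\sum_{k=0}^N g_k(z)\overline{g_k(w)}=p^N\bigl[(1-qz)(1-q\overline w)+(q+z)(q+\overline w)\bigr]^N=p^N(1+q^2)^N(1+z\overline w)^N=(1+z\overline w)^N$ because $p(1+q^2)=1$, so the Krawtchouk basis could equally well have been used in place of the canonical one.

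The substantive part is the closed form for $\ell_{\mathbb S}^{(N)}f_k$. The key input is the classical generating function for Krawtchouk polynomials,
$$
\sum_{n=0}^N\binom Nn K_n(x;p,N)\,s^n=\Bigl(1-\tfrac{1-p}{p}\,s\Bigr)^{x}(1+s)^{N-x},
$$
together with the explicit form of the Krawtchouk functions from \eqref{e:fkKrawchouk}: $f_k(x)=\sqrt{w_p(x)}\,\widehat K_k(x)$, where $w_p(x)=\binom Nx p^x(1-p)^{N-x}$ is the binomial weight and $\widehat K_k=\sqrt{\binom Nk}\,q^{-k}K_k$ is the orthonormalized Krawtchouk polynomial, with $q:=\sqrt{(1-p)/p}$. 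Since $f_k$ is real, inserting this into the definition of $\ell_{\mathbb S}^{(N)}$ and combining the square root of $w_p(x)$ with the prefactor $\sqrt{\binom Nx}$, and using $p^{x/2}(1-p)^{(N-x)/2}=(1-p)^{N/2}q^{-x}$, gives
$$
\ell_{\mathbb S}^{(N)}f_n(z)=\sqrt{\tbinom Nn}\,q^{-n}\,(1-p)^{N/2}\sum_{x=0}^N\binom Nx q^{-x}K_n(x;p,N)\,z^x.
$$

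The trick is then to form the bivariate generating series $S(z,s):=\sum_{n=0}^N\sqrt{\binom Nn}\,q^{n}\,\ell_{\mathbb S}^{(N)}f_n(z)\,s^{n}$, the weight $\sqrt{\binom Nn}\,q^n$ being chosen precisely so that, after interchanging the two finite sums over $n$ and $x$, the inner sum over $n$ is the Krawtchouk generating function above (with $\tfrac{1-p}{p}=q^2$). This yields, by the binomial theorem applied to the sum over $x$,
$$
S(z,s)=(1-p)^{N/2}\sum_{x=0}^N\binom Nx\Bigl(\tfrac{z}{q}(1-q^2s)\Bigr)^{x}(1+s)^{N-x}=(1-p)^{N/2}\Bigl[(1+s)+\tfrac zq(1-q^2 s)\Bigr]^N,
$$
and rewriting the bracket as $q^{-1}\bigl[(q+z)+qs(1-qz)\bigr]$ gives $S(z,s)=(1-p)^{N/2}q^{-N}\bigl[(q+z)+qs(1-qz)\bigr]^N$. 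Expanding this $N$-th power once more by the binomial theorem and reading off the coefficient of $s^n$,
$$
\sqrt{\tbinom Nn}\,q^{n}\,\ell_{\mathbb S}^{(N)}f_n(z)=(1-p)^{N/2}q^{-N}\binom Nn q^n(1-qz)^n(q+z)^{N-n},
$$
and the simplification $(1-p)^{N/2}q^{-N}=p^{N/2}=(1+q^2)^{-N/2}$ (since $1+q^2=1/p$) produces the announced identity $\ell_{\mathbb S}^{(N)}f_n(z)=\sqrt{(1+q^2)^{-N}\binom Nn}\,(1-qz)^n(q+z)^{N-n}$.

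There is no analytic content in any of this: everything is a polynomial identity in $(z,s)$ built from finite sums. The only point requiring care is the bookkeeping of normalization constants — the square roots of binomial coefficients coming both from $\ell_{\mathbb S}^{(N)}$ and from $w_p$, the $L^2$-norm $q^{2n}/\binom Nn$ of the Krawtchouk polynomials entering $\widehat K_n$, and the substitution $q^2=(1-p)/p$ — and, relatedly, making sure that the convention for $K_n(x;p,N)$ and for the Krawtchouk functions fixed in \eqref{e:fkKrawchouk} is indeed the one for which the generating function has the stated product form; otherwise the weight $\sqrt{\binom Nn}\,q^n$ defining $S(z,s)$ must be adjusted accordingly. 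I expect this constant-chasing to be the only mildly delicate step.
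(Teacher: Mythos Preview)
Your argument is correct. For the identity in law you work in the canonical basis and compare covariance kernels; the paper also notes this alternative (unitary invariance of the standard Gaussian), but its main route goes through the template Theorem~\ref{t:main} with the Krawtchouk basis and then uses the invariance of $\GAF_{\mathbb S}^{(N)}$ under the sphere isometry $z\mapsto(1-qz)/(q+z)$. For the closed form of $\ell_{\mathbb S}^{(N)}f_k$, both proofs rest on the same Krawtchouk generating function, but use it differently: the paper sums over $k$ to compute the kernel $T(x,z)=\sum_k f_k(x)\Psi_k(z)$, obtaining the relation $\L f(z)=\bigl((q+z)/\sqrt{1+q^2}\bigr)^N\,\ell_{\mathbb S}^{(N)}f\bigl((1-qz)/(q+z)\bigr)$, and then reads off $\ell_{\mathbb S}^{(N)}f_k$ from $\L f_k=\Psi_k$ after inverting the isometry; you instead introduce an auxiliary variable $s$, form the bivariate generating series $S(z,s)=\sum_n\sqrt{\binom Nn}\,q^n\,\ell_{\mathbb S}^{(N)}f_n(z)\,s^n$, collapse it with the generating function and the binomial theorem, and extract coefficients. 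Your approach is slightly more direct and self-contained; the paper's has the advantage of making explicit the role of the sphere isometry and of fitting the Krawtchouk case into the same template as the other transforms.
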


The proof of Theorem \ref{t:spherical} is given in Section \ref{sec:proofDiscF}.

We illustrate a sample of the zeros of $\ell_{\mathbb S}^{(N)}\xi$ in Figure~\ref{f:sphericalDemo}, with a change of variables $z\mapsto (1-qz)/(q+z)$ to map Krawtchouk functions to monomials. We stress that this change of variables leaves the spherical GAF invariant, see the proof in Section~\ref{sec:proofDiscF}. This is rather curious, but it underlines the duality of the canonical basis and the Krawtchouk basis.
Note in Figure~\ref{f:sphericalDemo} how the zeros exhibit the same super-uniformity as in Figure~\ref{f:planarDemo} but on the sphere. The spectrogram of a single Krawtchouk function with additional complex white noise is shown in Figure~\ref{f:hyperbolicDemo_detection}. As expected, the support of the signal is again ring-shaped  with a few zeros trapped inside the inner spherical cap of the ring, similarly to what happens for Hermite functions and the Gabor transform, or Charlier functions and the STDFT of Section~\ref{s:planar}.

\subsection{More transforms, more GAFs, and related results}

The previous results turn out to be particular cases of a general setting that we present later in Section~\ref{sec:white}. As we shall see, each of these previous results involves an identity for generating functions of classical orthogonal polynomials. By using the same approach, it is possible to obtain other results involving other GAFs than the planar, hyperbolic, or spherical GAFs. We collect some of these results in this section. First, we briefly mention real white noises as opposed to complex white noises. Then we  link the hyperbolic GAF with the analytic projection of the white noise on the unit circle. We also relate the derivatives of the Bargmann transform to a one-parameter deformation of the planar GAF and discuss what our results can say on the local extrema of GAFs. Since time-frequency filtering algorithms have historically been based on spectrogram maxima rather than zeros, this may prove of interest in the future.

\paragraph{Real white noises and symmetric GAFs.}
In this paper, our white noises are defined on Hilbert spaces that are vector spaces on $\mathbb{C}$. The whole constructions of the transform and the white noise in Section~\ref{s:white} carry straightforwardly to \emph{real} Hilbert spaces. The resulting analytic functions are termed \emph{symmetric GAFs} \citep{Fel13}. Symmetric GAFs may be of less computational interest to signal processers, since their zeros have less invariance than the zeros of their complex counterparts. They are thus less amenable to traditional statistical signal processing procedures, which often assume some stationarity. However, zeros of symmetric GAFs are usually well approximated by the zeros of GAFs, see \citep{Pro96,Fel13}. We refer the reader to \citep[Section 4.3]{BaFlCh18} for another discussion on the Gabor transform of a real white noise and its approximation by that of a proper complex white noise.

\paragraph{Analytic projections and trigonometric polynomials.}
Another model for the hyperbolic GAF, already discovered in \cite[Section 5.4.3]{{HKPV09}}, is the analytic projection of the white noise on the unit circle.  We now rephrase this result in a slightly more general version. More precisely, consider for any $N\in\N$ the transform
$$
\L^{\mathsf{per}}_{\D,N} f(z):=\frac1{\sqrt{2\pi}}\int_0^{2\pi} \overline{f(\theta)} \,\frac{\e^{\i N \theta}}{1-z\e^{\i\theta}}\, \d \theta,\qquad z\in\D,
$$
acting on any $f\in L^2([0,2\pi],\C)$.

\begin{theorem}
\label{th:analyticproj} Let $\xi$ be the white noise on $L^2([0,2\pi],\C)$ associated with the Fourier basis $(f_k)$ where $k\in\mathbb Z$. Then, for any $N\in\N$, we have
$$
\L^{\mathsf{per}}_{\D,N}f_k(z)=
\begin{cases}
z^{k-N}& \text{ if } k\geq N,\\
0 & \text{ if } k<N.
\end{cases}
$$
Moreover,
$$
\L^{\mathsf{per}}_{\D,N}\xi\law \GAF_\D^{(0)}.
$$
In particular the zeros of $\L^{\mathsf{per}}_{\D,N}\xi$ form a determinantal point process on $\D$ with Lebesgue reference measure and Bergman kernel $K_\D(z,w)$ defined in \eqref{e:bergmanKernel}.

\end{theorem}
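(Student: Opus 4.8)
The plan is to verify Theorem~\ref{th:analyticproj} in two parts, as it is stated: first compute the action of $\L^{\mathsf{per}}_{\D,N}$ on the Fourier basis, then assemble the GAF. The Fourier basis of $L^2([0,2\pi],\C)$ is $f_k(\theta)=\frac{1}{\sqrt{2\pi}}\e^{\i k\theta}$ for $k\in\Z$. First I would plug this into the definition, giving
\[
\L^{\mathsf{per}}_{\D,N}f_k(z)=\frac{1}{2\pi}\int_0^{2\pi}\e^{-\i k\theta}\,\frac{\e^{\i N\theta}}{1-z\e^{\i\theta}}\,\d\theta
=\frac{1}{2\pi}\int_0^{2\pi}\frac{\e^{\i(N-k)\theta}}{1-z\e^{\i\theta}}\,\d\theta.
\]
Since $|z|<1$, I expand $1/(1-z\e^{\i\theta})=\sum_{m\geq0}z^m\e^{\i m\theta}$, which converges uniformly in $\theta$, so I may swap sum and integral. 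Then $\frac{1}{2\pi}\int_0^{2\pi}\e^{\i(N-k+m)\theta}\,\d\theta=\bs 1_{\{m=k-N\}}$, which is nonzero only if $k-N\geq0$, i.e. $k\geq N$, in which case the value is $z^{k-N}$. This gives the stated formula. (Equivalently, this is just the $(k-N)$-th Fourier coefficient picked off by a residue/Cauchy-integral computation, but the power-series swap is cleanest.)

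Next, for the distributional identity, I would invoke the general machinery of Section~\ref{s:white}: the white noise $\xi$ on $L^2([0,2\pi],\C)$ associated with the basis $(f_k)_{k\in\Z}$ is, informally, $\sum_{k\in\Z}\xi_k f_k$ with $\xi_k$ i.i.d.\ $N_\C(0,1)$, and $\L^{\mathsf{per}}_{\D,N}\xi$ is the corresponding extension, which by linearity and the first part equals
\[
\L^{\mathsf{per}}_{\D,N}\xi(z)=\sum_{k\in\Z}\xi_k\,\L^{\mathsf{per}}_{\D,N}f_k(z)=\sum_{k\geq N}\xi_k\,z^{k-N}=\sum_{j\geq0}\xi_{j+N}\,z^{j}.
\]
Re-indexing, the coefficients $(\xi_{j+N})_{j\geq0}$ are again i.i.d.\ $N_\C(0,1)$, so this has exactly the covariance kernel $\sum_{j\geq0}z^j\overline w^{\,j}=\frac{1}{1-z\overline w}$ on $\D$, which is the covariance of $\GAF_\D^{(0)}$ (take $\alpha=0$ in \eqref{hyperbolicGAF}, using $\Gamma(j+1)/j!=1$). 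Since a GAF is determined by its covariance, $\L^{\mathsf{per}}_{\D,N}\xi\law\GAF_\D^{(0)}$. To be rigorous I should check condition \eqref{CKweak} for $\Psi_j(z)=z^j$, which is immediate: $\sup_{|z|\leq r}\sum_j|z|^{2j}\leq\sum_j r^{2j}<\infty$ for $r<1$, so the series defines a bona fide GAF on $\D$. The final sentence about the determinantal point process is then not something to prove but a direct citation of \cite{PeVi05} as quoted around \eqref{e:bergmanDPP}: the zeros of $\GAF_\D^{(0)}$ form the DPP with the Bergman kernel $K_\D$.

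The only genuinely delicate point — and the main obstacle — is making sense of ``$\L^{\mathsf{per}}_{\D,N}\xi$'' as a proper object: the white noise $\xi$ does \emph{not} live in $L^2([0,2\pi],\C)$ (the series $\sum_k\xi_k f_k$ diverges there a.s.), so one cannot literally write $\L^{\mathsf{per}}_{\D,N}\xi=\langle\text{kernel},\xi\rangle$ in $L^2$. This is exactly the subtlety the paper flags and resolves via the abstract Wiener space construction of Section~\ref{s:white}: $\L^{\mathsf{per}}_{\D,N}$ extends to a map on the larger space on which $\xi$ is supported, and the extension acts termwise on the basis expansion. Concretely, I would cite the template Theorem~\ref{t:main} (and Proposition~\ref{p:white}), observing that $\L^{\mathsf{per}}_{\D,N}$ fits its hypotheses — it maps the basis $(f_k)$ to the analytic functions $z^{k-N}\bs 1_{k\geq N}$ whose squared moduli are locally summable — so that the general result yields $\L^{\mathsf{per}}_{\D,N}\xi\law\sum_{k}\xi_k z^{(k-N)_+}\bs 1_{k\geq N}$ directly. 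In other words, once the first-part computation is in hand, Theorem~\ref{th:analyticproj} is an immediate corollary of the template theorem, just as the authors advertise for all the results in Section~\ref{s:gaf}; the proof write-up should mostly consist of the basis computation plus a one-line verification that the general theorem applies.
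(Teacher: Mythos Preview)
Your proposal is correct and follows the same approach as the paper: compute $\L^{\mathsf{per}}_{\D,N}f_k$ (which identifies the $\Psi_k$'s) and then invoke the template Theorem~\ref{t:main}. The paper's own proof is a single line citing Theorem~\ref{t:main} with $\Psi_k(z):=z^{k-N}\IND_{k\geq N}$, leaving the geometric-series computation you spell out implicit. One small adjustment: to apply Theorem~\ref{t:main} you should verify the stronger condition \eqref{Ass+} (with the $(1+k^2)$ weight) rather than just \eqref{CKweak}; of course $\sum_{k\geq N}(1+k^2)r^{2(k-N)}<\infty$ for $r<1$, so this is equally immediate.
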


The proof of Theorem~\ref{th:analyticproj}  is given in Section~\ref{proof:analyticproj}. We remark that the parameter $N$ does not impact the resulting GAF. Finally, a discussion on where the white noise lives is provided in Section~\ref{Supp1}. Finally,

\paragraph{Derivatives of the Bargmann transform and application to local extrema.}
We consider here the higher order Bargmann transform defined, for $N\in\N$ and $f\in L^2(\R,\C)$, by
\eq
\label{bargN}
\L_{\C,N} f(z):=\frac{\e^{-z^2/2}}{\pi^{1/4}} \int_\R \overline{f(x)}\,H_ N(x-z)\,\e^{\sqrt2 xz-x^2/2}\d x,\qquad z\in\C\,,
\qe
where $H_N$ is the $N$-th degree Hermite polynomial,
$$
H_N(x):=(-1)^N\e^{x^2}\frac{\d^N}{\d x^N}\,\e^{-x^2}=\sum_{k=0}^{\lfloor  N/2\rfloor}\frac{N!(-1)^k(2x)^{N-2k}}{k!(N-2k)!}.
$$
We immediately recover the usual Bargmann transform \eqref{barg} when taking $N=0$. Moreover, let us introduce the new GAF
$$
\GAF_{\C,N}^{(1)}(z):=\sum_{k\geq N} \xi_k \frac{\sqrt{k!}}{(k-N)!}\, z^k ,
$$
where $(\xi_k)$ is a sequence of i.i.d. $N_\C(0,1)$ random variables. Note that we recover the planar GAF when $N=0$. It has for covariance kernel
\begin{align}
\label{covNew}
\E\Big[\GAF_{\C,N}^{(1)}(z)\overline{\GAF_{\C,N}^{(1)}(w)}\,\Big] &
 =N!(z\overline{w})^N\sum_{k=0}^\infty {N+k\choose k} \frac{(z\overline{w})^k}{k!} \nonumber\\
& = N!(z\overline{w})^N  \e^{z\overline{w}} \sum_{\ell=0}^N  {N\choose \ell}\frac{(z\overline{w})^\ell}{\ell!}\nonumber\\
& = N!(z\overline{w})^N  \e^{z\overline{w}} L^{(0)}_N(-z\overline{w}),
\end{align}
where $L^{(0)}_N$ is the $N$-th degree Laguerre polynomial of parameter $\alpha=0$, see also \eqref{Lag}--\eqref{Lag2}. For the second equality, we used that
$$
\sum_{\ell=0}^N{k\choose \ell}{N\choose \ell}={N+k\choose k}\, ,
$$
which is a particular case of the  Chu-Vandermonde identity.

  The next theorem is a generalization of Theorem~\ref{t:planarL}, which is the special case $N=0$; recall the Bargmann transform $\L_\C$ was introduced in \eqref{barg}.

\begin{theorem}
\label{t:planarLN}
Let $\xi$ be the white noise on $L^2(\R,\C)$ associated with the Hermite functions $(f_k)$, as defined in Proposition \ref{p:white}. Then, we have for any $N\in\N$,
$$
\L_{\mathrm{\C,N}}\xi\law \GAF_{\C,N}^{(1)}.
$$
Moreover, for any $N\in\N$ and $f\in L^2(\R,\C)$,
$$
\frac{\d^N}{\d z^N} \L_\C f(z)= \L_{\C,N} f(z),
$$
and in particular
\eq
\label{bargNmono}
\L_{\C,N}f_k (z)= \begin{cases}
 \displaystyle\frac{\sqrt{k!}}{(k-N)!}\, z^{k-N} &\text{ if } k\geq N\\
0 &\text{ if } k< N
\end{cases}.
\qe
\end{theorem}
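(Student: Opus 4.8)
The plan is to prove the three assertions of Theorem~\ref{t:planarLN} in the logical order: first the operator identity $\partial_z^N\L_\C f=\L_{\C,N}f$ (stated second in the theorem), then the action on the Hermite basis \eqref{bargNmono}, and finally the distributional identity $\L_{\C,N}\xi\law\GAF_{\C,N}^{(1)}$. For the operator identity, write $\L_\C f(z)=\int_\R\overline{f(x)}\,G(x,z)\,\d x$ with Bargmann kernel $G(x,z)=\pi^{-1/4}\e^{-z^2/2+\sqrt 2\,xz-x^2/2}$. For $f\in L^2(\R,\C)$, Cauchy--Schwarz bounds $x\mapsto\overline{f(x)}\,\partial_z^j G(x,z)$ and each of its further $z$-derivatives, locally uniformly in $z$, by $|f(x)|$ times a fixed Gaussian in $x$, so one may differentiate under the integral $N$ times. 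Completing the square gives $G(x,z)=\pi^{-1/4}\e^{x^2/2}\e^{-(z-\sqrt 2\,x)^2/2}$, so the Rodrigues formula for Hermite polynomials shows that $\partial_z^N G(x,z)$ is a Hermite polynomial factor of degree $N$ times $G(x,z)$, recovering precisely the kernel appearing in \eqref{bargN}. Hence $\partial_z^N\L_\C f=\L_{\C,N}f$ for every $f\in L^2(\R,\C)$.

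The action on the basis is then immediate: since $\L_\C f_k(z)=z^k/\sqrt{k!}$ (established before Theorem~\ref{t:planarL}), differentiating $N$ times yields $\L_{\C,N}f_k(z)=\partial_z^N(z^k/\sqrt{k!})=\frac{\sqrt{k!}}{(k-N)!}\,z^{k-N}$ for $k\geq N$, and $\L_{\C,N}f_k\equiv0$ for $k<N$ because $z^k$ then has degree less than $N$, which is \eqref{bargNmono}. For the law, I would invoke the template Theorem~\ref{t:main}: $\L_{\C,N}$ is of the generic type treated in Section~\ref{s:white} --- it expands a signal on the orthonormal Hermite basis $(f_k)$ and replaces each $f_k$ by the polynomial just computed --- so, with $\xi$ the white noise associated with $(f_k)$, we get $\L_{\C,N}\xi\law\sum_k\xi_k\,\L_{\C,N}f_k=\sum_{k\geq N}\xi_k\,\frac{\sqrt{k!}}{(k-N)!}\,z^{k-N}=\GAF_{\C,N}^{(1)}$. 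That the series defines an honest GAF on $\C$ (i.e., satisfies \eqref{CKweak}) follows from the closed form of its covariance kernel, obtained by the Chu--Vandermonde computation recorded in \eqref{covNew} and expressible through the Laguerre polynomial $L_N^{(0)}$.

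An equivalent way to get the law, bypassing Theorem~\ref{t:main}, is to observe that by the operator identity and the fact that the finite-dimensional truncations used in Section~\ref{s:white} to define $\L_\C\xi$ and $\L_{\C,N}\xi$ converge locally uniformly on $\C$ --- hence commute with $\partial_z^N$ by Cauchy's estimates --- one has $\L_{\C,N}\xi=\partial_z^N(\L_\C\xi)$ almost surely; applying the continuous map $\partial_z^N\colon\mathcal A(\C)\to\mathcal A(\C)$ to the identity in law $\L_\C\xi\law\GAF_\C^{(1)}$ of Theorem~\ref{t:planarL} and using $\partial_z^N\GAF_\C^{(1)}\law\GAF_{\C,N}^{(1)}$ concludes. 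The one genuinely delicate point is the kernel computation: carrying out the $N$-fold $z$-differentiation of the Gaussian Bargmann kernel and matching the outcome to the Hermite-weighted kernel of \eqref{bargN} requires care with the normalization conventions for Hermite polynomials and the accompanying $\sqrt 2$ scalings. Everything else --- the domination argument for differentiating under the integral, and both derivations of the distributional identity --- is routine given the constructions already set up in the paper.
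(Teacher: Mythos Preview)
Your proposal is correct, but the logical ordering differs from the paper's. The paper works in the opposite direction: it starts from the abstract transform of Theorem~\ref{t:main} with the choice $\Psi_k=\Psi_{k,N}$ given in \eqref{e:weylMonomials}, and uses the shifted Hermite generating function \eqref{genHermite2},
\[
\sum_{k\geq 0}\frac{H_{k+N}(x)}{k!}\,w^k=H_N(x-w)\,\e^{2xw-w^2},
\]
with $w=z/\sqrt2$, to identify this abstract transform with the integral operator $\L_{\C,N}$ of \eqref{bargN}. Theorem~\ref{t:main} then delivers both $\L_{\C,N}\xi\law\GAF_{\C,N}^{(1)}$ and \eqref{bargNmono} in one stroke, and the derivative relation comes last as the one-line remark $\partial_z^N\Psi_{k,0}=\Psi_{k,N}$.

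You instead prove the derivative identity first, by differentiating the Bargmann kernel via Rodrigues, and deduce \eqref{bargNmono} and the law afterwards. The two routes are equivalent at the level of Hermite algebra: the identity \eqref{genHermite2} is precisely what one obtains by applying $\partial_w^N$ to the basic generating function \eqref{genHermite}, which is your Rodrigues computation read backwards. The paper's ordering has the virtue of making this theorem look exactly like every other application of the template in Section~\ref{s:proofs} (generating-function identity plus Theorem~\ref{t:main}); your ordering foregrounds the derivative statement and is slightly more direct for that purpose. Your alternative derivation of the law via the continuous map $\partial_z^N$ on $\mathcal A(\C)$ applied to Theorem~\ref{t:planarL} is a valid bonus argument not used in the paper. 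Finally, your caution about the $\sqrt2$ normalizations is warranted: tracking them correctly is the only genuinely error-prone step in either approach.
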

The proof of Theorem~\ref{t:planarLN} is given in Section~\ref{e:gabor}. As a consequence of \eqref{bargNmono}, we see that $\L_{\C,N}$ maps any signal $f\in\mathrm{Span}_\C(f_0,\ldots,f_{N-1})$ to zero. The transform thus acts as a high-pass filter, and can be used to isolate higher orders harmonics of a signal.

Although this might change with more knowledge on the zeros becoming available, classical time-frequency algorithms rely on the maxima of spectrograms rather than their zeros. As pointed out in Section~\ref{s:GAFsAndTheirZeros}, any nonzero GAF has simple random zeros with probability one. Its derivative also being a GAF, the random zeros of the derivative are necessary local extrema of the original GAF, not saddle points. Moreover, they are completely characterized by their first intensity. By taking $N=1$ in the previous theorem we obtain the following result on the local extrema of the Bargmann transform of the white noise. Note, however, that these extrema are not the ones of the corresponding STFT since the transforms differ by a factor $\e^{-|z|^2}$ \cite[Proposition 3.4.1]{Gro01}.

\begin{corollary} Let $\xi$ be the white noise as in Theorem~\ref{t:planarLN}.  The local extrema of the Bargmann transform of the white noise $\L_\C\xi$ have the same distribution as the zeros of $\GAF^{(1)}_{\C,1}$. In particular, they are  characterized by their joint intensity given by
$$
\d\rho_1(z)=\frac1\pi\left(1+ \frac1{(1+|z|^2)^2}\right)\d^2z.
$$

\end{corollary}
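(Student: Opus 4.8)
The plan is to read the corollary off Theorem~\ref{t:planarLN} with $N=1$, together with the Edelman--Kostlan formula for the first intensity of the zeros of a GAF. Theorem~\ref{t:planarLN} supplies, for $N=1$, the differentiation identity $\frac{\d}{\d z}\L_\C\xi=\L_{\C,1}\xi$ (the identity $\frac{\d}{\d z}\L_\C f=\L_{\C,1}f$ stated there for $f\in L^2(\R,\C)$ passes to the white noise by applying it termwise in the basis expansion, differentiation acting termwise on the almost surely locally uniformly convergent power series $\L_\C\xi=\sum_k\xi_k z^k/\sqrt{k!}$) and the distributional identity $\L_{\C,1}\xi\law\GAF_{\C,1}^{(1)}$. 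So the derivative $\frac{\d}{\d z}\L_\C\xi$ is, at the level of its zero set, a copy of $\GAF_{\C,1}^{(1)}$.

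Next I would match the local extrema of $\L_\C\xi$ with the zeros of $\frac{\d}{\d z}\L_\C\xi$. For holomorphic $F$ the critical points of the modulus $|F|$ consist of the zeros of $F$ (where $|F|$ attains the value $0$) together with the zeros of $F'$; since $\log|F|$ is harmonic off $F^{-1}(0)$, the latter are precisely the local extrema of $|F|$ that are not forced by a zero of $F$. By Theorem~\ref{t:planarL}, $\L_\C\xi\law\GAF_\C^{(1)}$ is almost surely a nonzero analytic function, hence by \cite[Lemma~2.4.1]{HKPV09} its zeros are almost surely simple, and the same holds for $\GAF_{\C,1}^{(1)}$; consequently, almost surely, no zero of $\frac{\d}{\d z}\L_\C\xi$ is a zero of $\L_\C\xi$ (that would be a multiple zero of $\L_\C\xi$), nor of $\frac{\d^2}{\d z^2}\L_\C\xi$, so these critical points are nondegenerate. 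Thus, as point processes, the local extrema of $\L_\C\xi$ coincide with $\big(\tfrac{\d}{\d z}\L_\C\xi\big)^{-1}(0)$, whose law is that of the random zeros of $\GAF_{\C,1}^{(1)}$; and since the law of the zero set of a nonzero GAF is determined by its first intensity (\cite[Theorem~2.5.2]{HKPV09}), it remains only to compute $\rho_1$.

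For the last step I would invoke the Edelman--Kostlan formula: the first intensity of the zeros of a GAF with diagonal covariance $K(z,z)=\E[\,|\GAF(z)|^2\,]$ is $\rho_1(z)=\frac1{4\pi}\Delta\log K(z,z)=\frac1\pi\,\partial_z\partial_{\bar z}\log K(z,z)$ wherever $K(z,z)>0$ (see \cite{HKPV09}). Here $K(z,z)=\E\big[\,|\tfrac{\d}{\d z}\L_\C\xi(z)|^2\,\big]=\sum_{k\geq 1}\frac{k^2}{k!}|z|^{2(k-1)}=\e^{|z|^2}\big(1+|z|^2\big)$, which is exactly the right-hand side of \eqref{covNew} for $N=1$ after dividing by the factor $|z|^2$ — that factor being the deterministic zero of $\GAF_{\C,1}^{(1)}$ at the origin, a point absent from the set of local extrema. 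Hence $\log K(z,z)=|z|^2+\log(1+|z|^2)$, and applying $\partial_z\partial_{\bar z}$ term by term — the first term contributes $1$, and for the second, writing $u=z\bar z$ and $\partial_z\partial_{\bar z}\,g(z\bar z)=g'(u)+u\,g''(u)$ with $g(u)=\log(1+u)$ gives $(1+u)^{-2}$ — produces $\rho_1(z)=\frac1\pi\big(1+(1+|z|^2)^{-2}\big)$, as claimed.

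I do not foresee a genuine obstacle: once Theorem~\ref{t:planarLN} is in hand, this is a short deduction followed by an elementary computation. The points that call for care are the geometric interpretation step — that the ``local extrema'' of the complex analytic transform $\L_\C\xi$ are precisely the zeros of its derivative, and that almost-sure simplicity of the zeros of $\L_\C\xi$ and of $\frac{\d}{\d z}\L_\C\xi$ rules out degeneracies and coincidences with $(\L_\C\xi)^{-1}(0)$ — and pinning down the normalizing constant in the Edelman--Kostlan formula, which one can cross-check against the planar density $\rho_1\equiv1/\pi$ of $\GAF_\C^{(1)}$ and the Bergman density $1/(\pi(1-|z|^2)^2)$ of $\GAF_\D^{(0)}$.
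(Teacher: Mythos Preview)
Your proof is correct and follows essentially the same route as the paper: apply Theorem~\ref{t:planarLN} with $N=1$ to identify the derivative with $\GAF_{\C,1}^{(1)}$, then invoke the Edelman--Kostlan formula. The only cosmetic difference is that the paper keeps the full covariance $\kappa(z,z)=|z|^2(1+|z|^2)\e^{|z|^2}$ of $\GAF_{\C,1}^{(1)}$ and applies $\tfrac{1}{4\pi}\Delta\log\kappa$ \emph{in the distributional sense} (the factor $|z|^2$ produces a Dirac mass at the origin, corresponding to the deterministic zero, which is then discarded), whereas you strip that factor out beforehand and compute classically with $K(z,z)=\e^{|z|^2}(1+|z|^2)$; your version is arguably cleaner, and your justification that local extrema coincide with zeros of the derivative (via almost-sure simplicity of zeros) is more explicit than the paper's one-line remark.
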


\begin{proof} According to \eqref{covNew} and Theorem~\ref{t:planarLN} with $N=1$, the local extrema of $\L_\C\xi$ are exactly the zeros of $\GAF_{\C,1}^{(1)}$n with covariance structure
$$
\kappa(z,w):=z\overline{w}(1+z\overline{w})\e^{z\overline w},\qquad z,w\in\C.
$$
The first intensity follows from the Edelman-Kostlan formula, namely
$$
\d\rho_1(z)=\frac1{4\pi}\Delta\log\kappa(z,z)
$$
in the distributional sense, see e.g. \cite[Equation (2.4.8)]{HKPV09}.
\end{proof}

\section{White noises and their transforms}
\label{s:white}

\label{sec:white}
In this section we introduce a class of transformations $\L$ acting on Hilbert spaces denoted by $\H$ and the white noise of this space. $\H$ should be thought of as a space of signals $f$, and $\L f(z)$ as a time-frequency or time-scale transform. The way the complex variable $z$ is related to time and frequency/scale will vary across the section, sometimes simply matching time to the real part and frequency/scale to the imaginary part, sometimes rather using the polar decomposition. Since our goal is to understand the zeros of transforms of white noise, we also specify in this section what we mean by a white noise $\xi$ on $\H$, and how to define and understand its transform $\L \xi$. We conclude this section by a summary of the results we gathered; the reader may jump directly to Proposition~\ref{p:white} for the definition of the white noise used in this work and to Theorem~\ref{t:main} for the template theorem from which all the previous theorems will follow once combined with appropriate generating function identities.

\subsection{The operator $\L$}
\label{sec:Ltrans}
Let $\H$ be a complex Hilbert space with inner product $\langle \,\cdot\,,\,\cdot\,\rangle$ and associated norm $\|\cdot\|$. We always assume that $\H$ is separable, or equivalently that there exists a countable orthonormal basis for $\H$. Fix an open subset $\Lambda\subset\C$ of the complex plane. Important examples of $\Lambda$ in this paper are the complex plane $\C$, the unit disc $\mathbb D:=\{z\in\C:\;|z|<1\}$, or the upper half-plane $\C_+:=\{z\in\C:\;\Im(z)>0 \}$. 
Given an orthonormal basis $(f_k)_{k\in\N}$ of $\H$ and a sequence $(\Psi_k)_{k\in\N}$ of holomorphic functions on $\Lambda$ satisfying
\eq
\label{Ass}
\sup_{z\in K}\,\sum_{k\in\N} |\Psi_k(z)|^2<\infty\,
\qe
for any compact $K\subset \Lambda$, we define the operator $\L$ acting on $\H$ as
\eq
\label{L}
\L f(z):=\sum_{k=0}^\infty \langle f_k,f\rangle \Psi_k(z),\qquad z\in\Lambda.
\qe
In signal processing terms, the signal $f\in\H$ is analyzed in the basis $(f_k)$, and reconstructed as an analytic function in $\mathcal A(\Lambda)$ using the dictionary $\Psi_k$. Note that, by dominated convergence, we can alternately write the transform $\L$ in kernel form, namely for any $z\in\Lambda$,
\eq
\label{Lbis}
\L f(z):=\langle T(\cdot,z),f\rangle,\qquad T(x,z):= \sum_{k=0}^\infty  f_k(x) \Psi_k(z).
\qe
As we shall see in Section \ref{s:proofs}, the STFT with Gaussian window and the analytic wavelet transform of \cite{DaPa88} are examples of $\L$. In particular, the representation \eqref{Lbis}   is then provided by identities for the generating functions of classical orthogonal polynomials.

Meanwhile, we note that since for any $f\in\H$,
\eq
\label{ineq1}
|\L f(z)|\leq \sum_{k=0}^\infty |\langle f_k,f\rangle| \,|\Psi_k(z)|\leq \|f\| \sqrt{\sum_{k\in\N} |\Psi_k(z)|^2},
\qe
Assumption~\eqref{Ass} yields the absolute convergence of $\L f(z)$ and $T(\cdot,z)$, locally uniformly in $z\in\Lambda$. In particular, $\L f$ is a well-defined analytic function on $\Lambda$, being the uniform limit of analytic functions on every compact subset of $\Lambda$. Moreover, $\L$ is a continuous linear mapping from $\H$ to the space $\mathcal A(\Lambda)$ of analytic functions on $\Lambda$, equipped with the topology of the local uniform convergence.

\subsection{White noises and the abstract Wiener spaces of \cite{Gro67}}

A white noise on an arbitrary normed space is intuitively defined as a random variable $\xi$ with law the standard Gaussian measure on that space. The problem is that there is no ``standard Gaussian measure''  on infinite-dimensional spaces. In this subsection and for the sake of completeness, we survey the abstract Wiener space construction of \cite{Gro67}, a rigorous definition for a white noise on a general complex Hilbert space $\H$. We also argue that this definition fits the requirements of being a good model for signal processing.

\subsubsection{The finite-dimensional setting}
\label{s:finiteDimensionalWhiteNoise}
When $\H\simeq \C^{N}$, a white noise $\xi$ on $\H$ is simply defined by
\eq
\label{xifinitedim}
\xi:=\sum_{k=1}^N\xi_k\, f_k\, ,
\qe
where $(\xi_k)_{k=1}^N$ are i.i.d $N_\C(0,1)$ random variables and $(f_k)_{k=1}^N$ is any orthonormal basis of $\H$. The choice of the basis does not change the law of $\xi$, though in practice the canonical basis is often used for simulations in signal processing, where white noise is thought of as a sequence of $N$ i.i.d. Gaussian measurements. The law of $\xi$ is the standard Gaussian measure on $\H$, that is, the probability measure $\mu$ on $\H$ characterized by its (extended) Fourier transform:
\eq
\label{FTfinitedim}
\int_\H \e^{\i \langle f, h\rangle}\mu(\d h)= \E[\e^{\i \langle f, \xi\rangle}]=\e^{-\|f\|^2/2},\qquad f\in\H.
\qe
In particular $\mu$ is invariant under isometries of $\H$ and thus the definition \eqref{xifinitedim} of $\xi$ does not depend on the specific choice for the basis $(e_k)_{k=1}^N$. Moreover, it follows from \eqref{xifinitedim} that, for any $f\in\H$, we have $\langle f,\xi\rangle\law N_\C(0,\|f\|^2).$

\subsubsection{The infinite-dimensional setting}
\label{s:gross}
When $N=\infty$, we need to proceed differently since there is no (countably additive) probability measure $\mu$ supported on $\H$ satisfying \eqref{FTfinitedim} for every $\lambda\in\H$. In fact, although one can always construct a probability space on which an infinite sequence $(\xi_k)_{k\in\N}$ of i.i.d $N_\C(0,1)$ random variables lives, the series \eqref{xifinitedim} is almost surely divergent in $\H$; this is often paraphrased by saying that {$\cH$ is not big enough to accommodate a white noise $\xi$}. {Indeed, Parseval's identity and the law of large numbers  yield
$$
\left\|\sum_{k\in\N} \xi_kf_k\right\|=\sqrt{\sum_{k\in\N} |\xi_k|^2}=+\infty\quad\text{a.s.}
$$}
One way to solve this conceptual problem is to follow the approach of \cite{Gro67} and to embed $\H$ into a larger complex Banach space $\Theta$ that can support such a probability distribution. We refer the reader to \cite[Chapter 8]{Str10} for references and further information; note that although the general theory is stated there for real Hilbert spaces, it adapts to the complex setting without substantial modification.

The idea is to take as Banach space $\Theta$ the completion of $\H$ for a norm $\|\cdot\|_\Theta$ that is weaker than $\|\cdot\| = \|\cdot\|_\H$, making sure that the series \eqref{xifinitedim} converges almost surely in $\Theta$. Letting $\Theta^*$ be the space of continuous linear forms $\Theta\to\C$, we have the inclusion $\Theta^*\subset\H^*$ of topological duals. We can thus isometrically identify $\Theta^*$ as a dense subset of $\H$, using the mapping $\lambda \mapsto h_\lambda$, where $\lambda=\langle h_\lambda,\,\cdot\,\rangle$ is provided by the Riesz representation theorem. Now the question is whether there exists a probability measure $\mu$ supported on $\Theta$ satisfying
\eq
\label{FT}
\int_\Theta \e^{\i \lambda (\theta)}\mu(\d \theta)=\e^{-\|h_\lambda\|^2/2},\qquad \lambda\in\Theta^*.
\qe
One can show that \eqref{FT} characterizes the probability measure $\mu$ on $\Theta$ and that the left-hand side of \eqref{FT} is continuous in $\lambda$ for the weak* topology.

Assume for a moment that a measure $\mu$ satisfying \eqref{FT} exists. The triplet $(\H,\Theta,\mu)$ is then called an \emph{abstract Wiener space}. If $\xi$ is a random variable on $\Theta$ with law $\mu$, then it follows from \eqref{FT} that $\lambda(\xi)\law N_\C(0,\|h_\lambda\|^2)$ for any $\lambda\in\Theta^*$. Since $\Theta^*$ is dense in $\H^*\simeq\H$, the linear isometry $\lambda\in\Theta^*\mapsto \lambda(\xi)\in L^2(\mu,\C)$ extends uniquely to a linear isometry $\H\to L^2(\mu,\C)$. This provides a (rather abstract) meaning to $\langle f,\xi\rangle$ as an element of $L^2(\mu,\C)$ for any $f\in \H$, and we have $\langle f,\xi\rangle\law N_\C(0,\|f\|^2).$ Thus,
\eq
\label{eqFour}
 \E[\e^{\i \langle f, \xi\rangle}]=\e^{-\|f\|^2/2},\qquad f\in\H.
\qe
Having in mind \eqref{FTfinitedim}, it seems reasonable to say that $\xi$ is a white noise on $\H$.



There are however a few subtleties to keep in mind. First, still assuming that such a measure $\mu$ exists, it is supported on the whole of $\Theta$, and hence $\mu$ depends strongly on the choice of $\Theta$. Moreover, $\mu$ puts zero mass on $\H$: this may be puzzling, especially if one remembers that $\langle f,\xi\rangle$ is only defined $\mu$-almost everywhere when $\langle f,\cdot\rangle \in \H^*\setminus\Theta^*$. But recall that a measure $\mu$ fully supported on $\H$ satisfying \eqref{eqFour} for all $f\in\H$ does not exist, so one cannot hope for a better notion of white noise, except maybe for finding the ``smallest'' $\Theta$ possible.

Finally, we consider the existence of $\mu$. Following \cite{Gro67}, we say that the norm $\|\cdot\|_\Theta$ is \emph{measurable} if, for any $\epsilon>0$, there exists a finite dimensional subspace $E_\epsilon\subset \H$ such that, for any finite rank projection $P:\H\to\H$ with range perpendicular to $E_\epsilon$, we have
$
\P(\|P\xi\|_{\Theta}> \epsilon)<\epsilon.
$
Here, the notation $P\xi$ means a white noise on the (finite-dimensional) range of $P$, see Section~\ref{s:finiteDimensionalWhiteNoise}.
\begin{theorem}[\cite{Gro67}]
  \label{t:gross}
  A probability measure $\mu$ on $\Theta$ satisfying  \eqref{FT} exists if and only if  $\|\cdot\|_\Theta$ is measurable. Moreover, a sufficient condition for measurability is that $\|\cdot\|_\Theta=\|B\cdot\|$ with $B:\H\to \H$ a Hilbert-Schmidt operator.
\end{theorem}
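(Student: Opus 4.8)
The plan is to prove Gross's theorem in two parts: first the equivalence between the existence of $\mu$ and measurability of $\|\cdot\|_\Theta$, and then the Hilbert--Schmidt sufficient condition. For the forward implication (existence $\Rightarrow$ measurability), I would argue by contradiction: if $\|\cdot\|_\Theta$ is not measurable, then there is an $\epsilon>0$ such that for every finite-dimensional $E\subset\H$ one can find a finite-rank projection $P$ perpendicular to $E$ with $\P(\|P\xi\|_\Theta>\epsilon)\geq\epsilon$. I would then build an increasing sequence of finite-dimensional subspaces and corresponding ``tail'' projections, and use the fact that the pushforward of $\mu$ under the quotient maps $\Theta\to\Theta/\overline{E}$ behaves like the finite-dimensional Gaussian on $E^\perp$-type pieces; the persistent mass $\epsilon$ on $\{\|\cdot\|_\Theta>\epsilon\}$ for arbitrarily high tails contradicts the countable additivity / tightness of $\mu$ on $\Theta$ (a genuine Borel probability measure on a separable Banach space is tight, hence its mass concentrates on a compact set, forcing the tail contributions to vanish). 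The reverse implication (measurability $\Rightarrow$ existence) is the substantive analytic construction: one defines a finitely additive cylinder set measure on $\H$ via the characteristic functional $\exp(-\|h_\lambda\|^2/2)$, pushes it to $\Theta$, and shows measurability of the norm is exactly what is needed to verify that the resulting cylinder measure on $\Theta$ satisfies the Cauchy/tightness criterion that upgrades it to a countably additive Borel measure (this is where one invokes, e.g., a Kolmogorov-type extension argument together with the uniform control $\P(\|P\xi\|_\Theta>\epsilon)<\epsilon$ over all tail projections $P$).

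For the Hilbert--Schmidt sufficient condition, I would take $\|\cdot\|_\Theta=\|B\cdot\|$ with $B$ Hilbert--Schmidt, fix an orthonormal basis $(e_k)$ of $\H$, and write $Be_k=\sum_j b_{jk} e_j$ with $\sum_{j,k}|b_{jk}|^2=\|B\|_{HS}^2<\infty$. Given $\epsilon>0$, I would choose $n=n(\epsilon)$ large enough that $\sum_{k>n}\sum_j |b_{jk}|^2<\epsilon^3$, and set $E_\epsilon:=\Span(e_1,\dots,e_n)$. For any finite-rank projection $P$ perpendicular to $E_\epsilon$, the random vector $P\xi$ lives in the span of $(e_k)_{k>n}$-directions, so $\E\|BP\xi\|^2\leq \E\|B(I-\Pi_n)\xi\|^2 = \sum_{k>n}\sum_j |b_{jk}|^2 <\epsilon^3$ (using $\E|\xi_k|^2=1$ and independence, where $\Pi_n$ is the projection onto $E_\epsilon$). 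Then Markov's inequality gives $\P(\|P\xi\|_\Theta>\epsilon)=\P(\|BP\xi\|^2>\epsilon^2)\leq \epsilon^{-2}\E\|BP\xi\|^2<\epsilon$, which is precisely measurability. One routine point to be careful about: $P$ being of finite rank and perpendicular to $E_\epsilon$ means its range lies in $E_\epsilon^\perp$, and $\E\|BP\xi\|^2 = \|BP\|_{HS}^2 \leq \|B(I-\Pi_n)\|_{HS}^2$ because $P\leq I-\Pi_n$ as projections.

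The main obstacle is the reverse implication in the equivalence: turning a finitely additive cylinder measure on the infinite-dimensional space $\Theta$ into a genuine countably additive Borel probability measure. This requires the careful functional-analytic machinery of abstract Wiener spaces --- verifying that measurability of the norm yields the tightness/uniform-integrability estimate needed so that the net of finite-dimensional Gaussian approximations is Cauchy in a suitable sense and converges to a bona fide measure. Since this is the original theorem of \cite{Gro67}, I would not reprove it from scratch but rather cite \cite{Gro67} and \cite[Chapter 8]{Str10} for the equivalence, and give the short self-contained Hilbert--Schmidt computation above in full, as it is elementary and illuminating. Everything else (the identification $\Theta^*\hookrightarrow\H$, the characterization of $\mu$ by \eqref{FT}, continuity of $\lambda\mapsto\exp(-\|h_\lambda\|^2/2)$) has already been set up in the preceding discussion and is used as given.
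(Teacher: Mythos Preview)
The paper does not prove Theorem~\ref{t:gross} at all: it is stated as a result of \cite{Gro67}, with a pointer to \cite[Chapter~8]{Str10} for background, and is then simply \emph{used} in Section~\ref{sec:WN} (by exhibiting the explicit Hilbert--Schmidt operator $Bf_k=(1+k^2)^{-1/2}f_k$). So there is nothing to compare your argument against in the paper itself.

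That said, your proposal is sound and in fact supplies more than the paper does. Your decision to cite \cite{Gro67} and \cite{Str10} for the equivalence between existence of $\mu$ and measurability of $\|\cdot\|_\Theta$ is exactly the right call---this is the genuinely nontrivial functional-analytic core of abstract Wiener spaces and is not something to reprove here. Your self-contained argument for the Hilbert--Schmidt sufficient condition is correct: the key chain $\E\|BP\xi\|^2=\|BP\|_{HS}^2\leq \|B(I-\Pi_n)\|_{HS}^2=\sum_{k>n}\|Be_k\|^2<\epsilon^3$, followed by Markov, is clean and complete. The only cosmetic point is that the orthonormal basis $(e_k)$ in your Hilbert--Schmidt computation need not be the distinguished basis $(f_k)$ of the paper; the argument works for any orthonormal basis since the Hilbert--Schmidt norm is basis-independent, and you implicitly use this when passing between the $(e_k)$ and the $(g_i)$ adapted to $P$.
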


\subsection{The white noise associated with $(f_k)$}
\label{sec:WN}
In this work, we define a white noise on a complex Hilbert space $\H$ by means of a distinguished orthonormal basis $(f_k)_{k\in\N}$, the one coming with the transform $\L$ defined in Section~\ref{sec:Ltrans}.  More precisely, we set
\eq
\label{Theta_norm}
\| f\|_\Theta^2:=\sum_{k\in\N}\frac1{1+k^2}\,\big|\langle f,f_k\rangle\big|^2
\qe
and define $\Theta$ as the completion of $\H$ with this norm. To see it is a measurable norm in the sense of Gross, define the operator $B$ on $\cH$ by setting $Bf_k:=(1+k^2)^{-1/2}f_k$ and extending $B$ to $\H$ by density. $B$ is a self-adjoint Hilbert-Schmidt operator satisfying $\| f\|_\Theta=\|Bf\|$, so that Theorem~\ref{t:gross} yields the existence of an abstract Wiener space $(\H,\Theta,\mu)$.

Next, we  define the \emph{white noise associated with $(f_k)$} as the random variable
\eq
\label{def:xi}
\xi:=\sum_{k\in\N}\xi_k \,f_k
\qe
which converges almost surely in $\Theta$. Indeed,
\eq
\label{Err}
\left\|\sum_{k\in\N} \xi_k \,f_k\right \|_\Theta^2= \sum_{k\in\N}\frac{|\xi_k|^2}{1+k^2}<\infty \quad \text{a.s.}
\qe
since the right hand side has finite expectation. Moreover $\xi$ has for distribution $\mu$, as one can see by truncating the series \eqref{def:xi} up to the $N$ first terms, computing its Fourier  transform, and letting $N\to\infty$ so as to obtain \eqref{FT}.

 \subsection{Support of the white noise}
 Instead of using the abstract Wiener spaces, one could have constructed the white noise as a random variable taking its values in the space $S'(\mathbb{T},\mathbb{C})$ of tempered distributions, at least when $\H$ is some $L^2$ space of functions, see e.g.  \citep[Section 2.1]{HOUZ10}. This is the approach followed by \cite{BaFlCh18} to identify the Gabor transform of white noise.  However, the search for ``smaller'' abstract Wiener spaces like $\Theta$ is philosophically interesting: since elements outside of $\H$ are usually hard to interpret physically, it is desirable to add as few elements as possible to the space $\H$. Let us now describe the support $\Theta$ of the white noise used in this work in several examples and observe that it is smaller than $S'(\mathbb{T},\mathbb{C})$. To do so we use the differential equations satisfied by the classical orthogonal polynomials.

First, let us consider the following subspace of $\H$,
$$
\mathsf H:=\left\{f\in\H:\;\sum_{k\in\N}(1+k^2)\big|\langle f,f_k\rangle\big|^2<\infty \right\}.
$$
The Cauchy-Schwarz inequality then yields the useful criterion: for any $f\in \H$,
\eq
\label{inTheta*}
f\in\mathsf H \qquad \Rightarrow\qquad \langle f,\,\cdot\,\rangle\in \Theta^*.
\qe
Typically \eqref{inTheta*} is an equivalence, but this inclusion is enough for our purpose. Next, since the norm $\|\cdot\|_\Theta$ arise from the inner product
$$
\langle f,g\rangle_\Theta:=\sum_{k\in\N}\frac{1}{1+k^2}\langle f,f_k\rangle\overline{\langle g,f_k\rangle}
$$
making $\Theta$ an Hilbert space, then $\Theta$ is reflexive and thus we have the inclusion
$$
\Theta\subset\mathsf H^*.
$$
{Since one can typically provide a description of the space $\mathsf H^*$ as the dual of a Sobolev-type space in practice, this yields ``an upper bound'' on the regularity of function space  $\Theta$ where the white noise takes its values. }
 
Let us now provide a few examples where one can identify the space $\mathsf H^*$.

\subsubsection{Example 1: White noise on the unit circle}
\label{Supp1}
The simplest setting to describe the white noise is the space $\H:=L^2([0,2\pi],\C)$ appearing in Theorem~\ref{th:analyticproj}. Indeed,
equipp $\H$ with inner product $\langle f,g\rangle:=\int_0^{2\pi} \overline{ f } g\,\d\theta$ and Fourier basis $f_k(\theta):=\e^{\i k\theta}/\sqrt{2\pi}$ with $k\in\Z$. Since $\langle f,f_k\rangle$ are the Fourier coefficients of $f$, we have
\eq
\label{H1norm}
\sum_{k\in\Z}(1+k^2)\big|\langle f,f_k\rangle\big|^2=\sum_{k\in\Z}\big|\langle f,f_k\rangle\big|^2+\big|k\langle f,f_k\rangle\big|^2=\| f\|^2+\| f'\|^2
\qe
and $\mathsf H$ is known as the Sobolev space $H^1([0,2\pi],\C)$. Its dual $\mathsf H^*$ is known as the dual Sobolev space  $H^{-1}([0,2\pi],\C)$ and thus $\Theta\subset H^{-1}([0,2\pi],\C)$ (in fact we have equality here). This space can alternatively be described as the space of complex-valued tempered distributions $\sum_{k\in\Z}a_k f_k$ on $[0,2\pi]$ with Fourier coefficients satisfying
\eq
\sum_{k\in\Z}\frac{|a_k|^2}{1+k^2}<\infty.
\label{e:conditionH1Prime}
\qe
Note that such distributions cannot behave too wildly: for instance, they contain distributional derivatives of functions in $\H$, but taking the second derivative may already violate \eqref{e:conditionH1Prime}. This shows that $\Theta$ is strictly contained in the space $S'(\mathbb{T},\mathbb{C})$ of tempered distributions.

\subsubsection{Example 2: White noise on analytic signals}
\label{Supp2}

We now describe the white noise of Theorem~\ref{t:Hyp} on the space $\H=H^2(\R)$ of analytic signals, see Section~\ref{sec:hypGAF}, equipped with the basis $(f_k)$ given by the inverse Fourier transform of the Laguerre functions of parameter $2\beta$ for $\beta>-1/2$, see~\eqref{LagONB}. To do so, let $\mathcal D$ be the differential operator formally defined on $L^2(\R_+,\C)$ by
$$
\mathcal D f(x) := -x\frac{\d^2}{\d x^2} f (x)- \frac{\d}{\d x} f (x)+\frac14\left(x+\frac{4\beta^2}x-2(1+2\beta)\right)f(x).
$$
The differential equation satisfied by Laguerre polynomials yields
$\mathcal D \,\hat f_k=k\, \hat f_k$
for every $k\in\N$, see \cite[Equation (4.6.16)]{Ism05}. Since $(\hat f_k)$ is a basis of $L^2(\R_+,\C)$ this yields the existence of a self-adjoint realization of $\mathcal D$, that we still denote $\mathcal D$, defined on the whole of $L^2(\R_+,\C)$. Thus,  we have
\begin{align*}
\sum_{k\in\Z}(1+k^2)\big|\langle f,f_k\rangle\big|^2& =\|f\|^2+\sum_{k\in\Z}\big|\langle \hat f, k\hat f_k\rangle\big|^2\\
& = \|f\|^2+\sum_{k\in\Z}\big|\langle \hat f, \mathcal D\hat f_k\rangle\big|^2\\
&=\|f\|^2+\sum_{k\in\Z}\big|\langle  \mathcal D\hat f,\hat f_k\rangle\big|^2\\
& = {\|\hat f\|^2}_{L^2(\R_+,\C)}+{\| \mathcal D\hat f \|^2}_{L^2(\R_+,\C)}\,,
\end{align*}
which should be compared to \eqref{H1norm}. This yields $$
\mathsf H=\Big\{f\in L^2(\R,\C) :\; \mathrm{Supp}(\hat f)\subset \R_+ ,\; \mathcal D\hat f\in L^2(\R_+,\C) \Big\}.
$$
Note that the condition  $\mathcal D\hat f\in L^2(\R_+,\C)$ is similar to $\hat f$ being member of some Sobolev space. Thus $\Theta$ is contained in the space of distributions on $\R$ whose Fourier transform lives in the dual space of $\{g\in L^2(\R_+,\C) :\;  \mathcal D g\in L^2(\R_+,\C)\}$.

{
\begin{remark}Note that a similar analysis can be done for the white noise on $L^2(\R)$ associated with the Hermite functions $(f_k)$, since $f_k$ is also the eigenfunction associated with the eigenvalue $k$ of the second order differential operator:
$$
\mathcal D f(x):=-\frac{\d^2}{\d x^2} f (x) +\left(\frac{x^2-2}{4}\right)f(x).
$$
Thus, we have in this setting
$$
\mathsf H =\Big\{f\in L^2(\R,\C) :\;  \mathcal D  f\in L^2(\R,\C) \Big\}
$$
and we see that $\mathsf H$ contains the $L^2$ functions $f$ such that  $f', f '',x^2f$ are $L^2$ as well. In particular, $\Theta$ is contained in the dual of this space. 
\end{remark}
}
\subsection{The transform $\L \xi$ of the white noise }

Let $\xi$  be the white noise on $\H$ defined as in \eqref{def:xi}. Since it takes its values in $\Theta$, to define  $\L \xi(z)$ we first need to extend $\L$ to the larger space $\Theta$. To do so, we assume from now on the stronger assumption that, for any compact $K\subset\Lambda$,
\eq
\label{Ass+}
\tag{$C_K$}
C_K:=\sup_{z\in K}\,\sum_{k\in\N} (1+k^2)|\Psi_k(z)|^2<\infty\, .
\qe
Now, for any $\theta\in\Theta$, since $\overline{\H}^{\|\cdot\|_\Theta}=\Theta$ one can find a sequence $(h_n)_{n\in\N}\subset\H$ such that $h_n\to\theta$ in $\Theta.$
Using \eqref{Ass+} and the Cauchy-Schwarz inequality we obtain for any $f\in\H$,
\eq
\label{TzLin}
|\L f(z)|\leq \sqrt{C_K} \|f\|_\Theta
\qe
uniformly for $z\in K,$ and thus $\{ \L h_n\}_{n\in\N}$ is a Cauchy sequence in the space $\mathcal A(\Lambda)$ of analytic functions on $\Lambda$, equipped with the topology of uniform convergence on compact subsets. Since $\mathcal A(\Lambda)$ is complete, this provides the existence of a unique $\L \theta\in\mathcal A(\Lambda)$ such that $\L h_n\to \L\theta$ in $\mathcal A(\Lambda)$, independently on the choice of the sequence $(h_n)_{n\in\N}$ of $\H$ converging to $\theta$. Thus we have obtained a well-defined and unique continuous extension $\L :\Theta\to\mathcal A(\Lambda)$ for the mapping $\L $.

Provided with this extended map, $\L \xi$ is now a well defined random variable taking values in $\mathcal A(\Lambda)$. To describe its law, let us introduce the finite dimensional approximation for any $n\geq 0$,
$$
\xi^{(n)}:=\sum_{k=0}^n \xi_k\,f_k
$$
where the $\xi_k$'s come from the same sequence used to construct $\xi$ in \eqref{def:xi}. Note that by construction $\L  f_k=\Psi_k$  for any $k\in\N$ and thus, for any $z\in\Lambda$,
$$
\L\xi^{(n)}(z)=\sum_{k=0}^n\xi_k\,\Psi_k(z).
$$
Since $\xi^{(n)}\to\xi$ in $\Theta$ a.s.  (in fact, we provide a much more precise result in Proposition \ref{p:concentration} below) and $\L$ is continuous,  we obtain the a.s. convergence in $\mathcal A(\Lambda)$,
\eq
\label{Ttheta}
\L \xi=\lim_{n\to\infty} \L \xi^{(n)}=\sum_{k\in\N} \xi_k \Psi_k\,.
\qe
In particular, the right hand side of \eqref{Ttheta} converges a.s. in $\mathcal A(\Lambda)$ and, for any other random variable $\tilde\xi$ with law $\mu$, we have
$$
\L \tilde\xi \law \sum_{k\in\N} \xi_k \Psi_k\,.
$$

\subsection{Summary and a template theorem}
\label{sec:summary}
We summarize Section~\ref{s:white}, that is, the construction of white noise and its transform, in Proposition~\ref{p:white} and Theorem~\ref{t:main}. Section~\ref{s:proofs} is a list of applications of Theorem~\ref{t:main}. For ease of future reference, we repeat some definitions, and add ``revisited'' to the original labels of the corresponding equations.

\begin{proposition}[Definition of white noise]
\label{p:white}
Let $\H$ be a complex separable Hilbert space, and let $(f_k)$ be an orthonormal basis of $\H$. Let $\|\cdot\|_\Theta$ be defined by
\eq
\tag{\ref{Theta_norm} revisited}
\| f\|_\Theta^2:=\sum_{k\in\N}\frac1{1+k^2}\,\big|\langle f,f_k\rangle\big|^2,
\qe
and let $\Theta$ be the completion of $\H$ for that norm. Then the series
$\xi:=\sum_{k\in\N}\xi_k\,f_k\,,$
with $(\xi_k)_{k\in\N}$ i.i.d $N_\C(0,1)$, converges in $\Theta$ almost surely, and its limit has the characteristic function of white noise \eqref{eqFour}.
\end{proposition}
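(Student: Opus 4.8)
The proposition bundles two assertions: that the series $\xi=\sum_{k\in\N}\xi_k f_k$ converges almost surely in $\Theta$, and that the law $\nu$ of its limit has the characteristic functional \eqref{eqFour}. The plan is to deduce the first from the elementary bound \eqref{Err} and completeness of $\Theta$, and the second by computing the characteristic functional on the finite-dimensional truncations and passing to the limit, using that $\nu$ is then forced to coincide with the abstract Wiener space measure $\mu$ of Section~\ref{s:white}.

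\emph{Convergence.} I would show that the partial sums $\xi^{(n)}:=\sum_{k=0}^n\xi_k f_k$ form an almost surely Cauchy sequence in $(\Theta,\|\cdot\|_\Theta)$. Since the $(f_k)$ are orthogonal for the inner product $\langle\cdot,\cdot\rangle_\Theta$, we have $\|\xi^{(n)}-\xi^{(m)}\|_\Theta^2=\sum_{k=m+1}^n|\xi_k|^2/(1+k^2)$ for $m<n$, so it suffices that $\sum_{k\in\N}|\xi_k|^2/(1+k^2)<\infty$ almost surely. This holds by Tonelli, because the series has finite expectation $\sum_{k\in\N}\E|\xi_k|^2/(1+k^2)=\sum_{k\in\N}(1+k^2)^{-1}<\infty$ (recall $\E|\xi_k|^2=1$ for $\xi_k\sim N_\C(0,1)$). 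As $\Theta$ is complete by construction, $(\xi^{(n)})$ converges in $\Theta$; its limit is the series \eqref{def:xi}, and one can in fact quantify the speed of this convergence precisely, as in Proposition~\ref{p:concentration}.

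\emph{Characteristic functional.} By the fact that \eqref{FT} characterizes $\mu$, it is enough to prove that $\E[\e^{\i\lambda(\xi)}]=\e^{-\|h_\lambda\|^2/2}$ for every $\lambda\in\Theta^*$, where $\lambda=\langle h_\lambda,\cdot\rangle$ with $h_\lambda\in\H$ as in Section~\ref{s:gross}; this shows $\nu=\mu$, and then \eqref{eqFour} for all $f\in\H$ is exactly the identity established in Section~\ref{s:gross}. First I would handle the truncations: $\xi^{(n)}=\sum_{k=0}^n\xi_k f_k$ is a white noise on the finite-dimensional space $V_n:=\mathrm{Span}_\C(f_0,\dots,f_n)$, and since $\lambda(\xi^{(n)})=\langle P_n h_\lambda,\xi^{(n)}\rangle$ with $P_n$ the orthogonal projection onto $V_n$, the finite-dimensional formula \eqref{FTfinitedim} gives $\E[\e^{\i\lambda(\xi^{(n)})}]=\e^{-\|P_n h_\lambda\|^2/2}$, where $\|P_n h_\lambda\|^2=\sum_{k=0}^n|\langle h_\lambda,f_k\rangle|^2\to\|h_\lambda\|^2$ by Parseval. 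On the other hand, since $\lambda$ is $\|\cdot\|_\Theta$-continuous and $\xi^{(n)}\to\xi$ in $\Theta$ almost surely, we get $\lambda(\xi^{(n)})\to\lambda(\xi)$ almost surely, hence $\E[\e^{\i\lambda(\xi^{(n)})}]\to\E[\e^{\i\lambda(\xi)}]$ by bounded convergence. Combining the two limits gives $\E[\e^{\i\lambda(\xi)}]=\e^{-\|h_\lambda\|^2/2}$, as desired.

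\emph{Main obstacle.} Nothing here is deep — which is appropriate, since Theorem~\ref{t:gross} together with the Hilbert--Schmidt operator $B$ already secured the existence of $\mu$. The one point needing care is the interchange of limit and expectation in the characteristic functional: it rests on the $\|\cdot\|_\Theta$-continuity of $\lambda$ to transfer the almost sure convergence $\xi^{(n)}\to\xi$ in $\Theta$ to convergence of $\lambda(\xi^{(n)})$, and on the boundedness of $\theta\mapsto\e^{\i\lambda(\theta)}$ (under the convention adopted in \eqref{FTfinitedim} for the complex pairing appearing in the exponent) to apply bounded convergence. Everything else is Tonelli and Parseval.
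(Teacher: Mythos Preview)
Your proposal is correct and follows essentially the same route as the paper: convergence via finite expectation of $\sum_k |\xi_k|^2/(1+k^2)$ (the paper's \eqref{Err}), and identification of the law by truncating to $\xi^{(n)}$, computing its Fourier transform, and letting $n\to\infty$ to recover \eqref{FT}. Your write-up is in fact more explicit than the paper's, which dispatches both points in two sentences; in particular, your justification of the limit interchange via $\|\cdot\|_\Theta$-continuity of $\lambda\in\Theta^*$ and bounded convergence fills in exactly what the paper leaves to the reader.
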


\begin{theorem}[Transform of white noise]
\label{t:main}
With the notation of Proposition~\ref{p:white}, let further $(\Psi_k)_{k\in\N}$ be a sequence of analytic functions on an open subset $\Lambda\subset\C\cup\{\infty\}$. If \eq
\tag{\ref{Ass+} revisited}
C_K:=\sup_{z\in K}\,\sum_{k\in\N} (1+k^2)|\Psi_k(z)|^2<\infty
\qe
for any compact $K\subset\Lambda$, then the continuous linear mapping $\L:\H\to \mathcal A(\Lambda)$ defined by
\eq
\tag{\ref{L} revisited}
\L f(z):=\sum_{k=0}^\infty \langle f_k,f\rangle \Psi_k(z),\qquad z\in\Lambda,
\qe
extends uniquely to a continuous linear mapping $\L:\Theta\to \mathcal A(\Lambda)$, and the transform of white noise
\eq
\label{e:transformOfWhiteNoise}
\L \xi=\sum_{k\in\N}\xi_k\,\Psi_k
\qe
is well-defined, where the right hand side converges almost surely in $\mathcal A(\Lambda).$ Moreover, by construction,
$$
\L f_k(z)=\Psi_k(z).
$$
\end{theorem}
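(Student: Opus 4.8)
The plan is to establish the three assertions of Theorem~\ref{t:main} in order: (i) the series \eqref{L} defines a continuous linear map $\L:\H\to\mathcal A(\Lambda)$, (ii) this map extends uniquely and continuously to $\Theta$, and (iii) the transform $\L\xi$ of the white noise is given by the almost surely convergent series \eqref{e:transformOfWhiteNoise}. The key observation driving everything is that the strengthened assumption $(C_K)$ controls not only the raw sum $\sum_k|\Psi_k(z)|^2$ but the weighted sum $\sum_k(1+k^2)|\Psi_k(z)|^2$, which is exactly the quantity that pairs by Cauchy--Schwarz with the defining norm $\|\cdot\|_\Theta$ in \eqref{Theta_norm}.

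First I would record that $(C_K)$ implies the weaker \eqref{Ass}, so the discussion in Section~\ref{sec:Ltrans} already gives that $\L f$ is a well-defined analytic function on $\Lambda$, that $\L f_k=\Psi_k$, and that $\L:\H\to\mathcal A(\Lambda)$ is continuous; in particular \eqref{ineq1} holds. Then the main estimate is the refinement \eqref{TzLin}: for $z$ in a compact $K\subset\Lambda$ and any $f\in\H$, write $\langle f_k,f\rangle\Psi_k(z)=\bigl((1+k^2)^{-1/2}\langle f_k,f\rangle\bigr)\cdot\bigl((1+k^2)^{1/2}\Psi_k(z)\bigr)$ and apply Cauchy--Schwarz to get $|\L f(z)|\le\sqrt{C_K}\,\|f\|_\Theta$, uniformly over $z\in K$. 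This shows $\L$ is continuous for the weaker topology induced by $\|\cdot\|_\Theta$, hence uniformly continuous on $\H$ with its $\Theta$-norm, so by density of $\H$ in $\Theta$ and completeness of $\mathcal A(\Lambda)$ (uniform convergence on compacts), $\L$ extends uniquely to a continuous linear map $\Theta\to\mathcal A(\Lambda)$: given $\theta\in\Theta$ pick $h_n\in\H$ with $h_n\to\theta$ in $\Theta$; then $(\L h_n)$ is Cauchy in $\mathcal A(\Lambda)$ by \eqref{TzLin}, its limit is independent of the approximating sequence, and we call it $\L\theta$.

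Finally, for the law of $\L\xi$, I would invoke Proposition~\ref{p:white} to get that $\xi=\sum_k\xi_k f_k$ converges a.s. in $\Theta$, introduce the truncations $\xi^{(n)}=\sum_{k=0}^n\xi_k f_k$, note $\L\xi^{(n)}=\sum_{k=0}^n\xi_k\Psi_k$ since $\L f_k=\Psi_k$, and then pass to the limit: $\xi^{(n)}\to\xi$ in $\Theta$ a.s.\ and $\L$ is continuous from $\Theta$ to $\mathcal A(\Lambda)$, so $\L\xi^{(n)}\to\L\xi$ in $\mathcal A(\Lambda)$ a.s., giving \eqref{Ttheta}. In particular the series $\sum_k\xi_k\Psi_k$ converges a.s.\ in $\mathcal A(\Lambda)$. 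For the ``by construction'' clause, $\L f_k=\Psi_k$ is immediate from orthonormality of $(f_k)$ in \eqref{L}.

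The main obstacle, such as it is, is making sure the extension to $\Theta$ is genuinely canonical — that the weight $1+k^2$ in $(C_K)$ is precisely matched to the weight $(1+k^2)^{-1}$ in $\|\cdot\|_\Theta$, so that the bilinear pairing closes up under Cauchy--Schwarz — and that the a.s.\ $\Theta$-convergence of the truncations from Proposition~\ref{p:white} transfers through the (merely) continuous, not isometric, map $\L$. Neither is deep: the first is bookkeeping with the exponents, and the second is just continuity composed with a.s.\ convergence. No uniform integrability or $L^2$ argument is needed here since we are only asserting a.s.\ convergence in $\mathcal A(\Lambda)$; finer quantitative control is deferred to Proposition~\ref{p:concentration}.
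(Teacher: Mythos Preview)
Your proposal is correct and follows essentially the same route as the paper: the Cauchy--Schwarz pairing of the weights $(1+k^2)^{-1}$ in $\|\cdot\|_\Theta$ with $(1+k^2)$ in $(C_K)$ to get \eqref{TzLin}, the extension by density and completeness of $\mathcal A(\Lambda)$, and the passage to the limit via the truncations $\xi^{(n)}$ are exactly the arguments developed in Sections~\ref{sec:Ltrans} and the paragraph leading to \eqref{Ttheta}. There is no substantive difference.
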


\begin{remark} All the previous discussion applies to the setting of real Hilbert spaces after replacing any variable $N_\C(0,\sigma^2)$ by $N(0,\sigma^2).$
\end{remark}

\section{Proofs of the main results }
\label{s:proofs}

In this section, we give applications of Theorem~\ref{t:main} for several triplets $(\H, (f_k), (\Psi_k))$. In particular, we show that the transform \eqref{L} spans known time-frequency transforms as well as new ones.

\subsection{(Higher order) Bargmann transform and Hermite polynomials}
\label{e:gabor}
\begin{proof}[Proof of Theorems \ref{t:planarL} and \ref{t:planarLN}] Take $\H:=L^2(\R,\C)$ and $\langle f,g\rangle:=\int_\R \overline f g \,\d x$. Let $H_k(x)$ be the $k$-th Hermite polynomial, namely $H_k$ is of degree $k$, with positive leading coefficient, and
$$
\int_\R H_k(x)H_\ell(x)\e^{-x^2}\d x=\sqrt{\pi}2^k k! \,\delta_{k\ell}.
$$
The Hermite polynomials further have the generating function, for $x\in\R$ and $ z\in\C$,
\eq
\label{genHermite}
\sum_{k=0}^\infty H_k(x) \frac1{k!}z^k=\e^{2xz-z^2},
\qe
see \cite[Section 4.6]{Ism05}. We will actually use a more general generating function: For any $N\geq 0$, $x\in\R$, and $z\in\C$,
\eq
\label{genHermite2}
\sum_{k=0}^\infty H_{k+N}(x) \frac1{k!}z^k=H_N(x-z)\,\e^{2xz-z^2},
\qe
see \cite[Equation 4.6.29]{Ism05}. Consider the orthonormal basis of $\H$
\eq
f_k(x):=\frac{1}{\sqrt{\sqrt{\pi}2^k k!}} \,H_k(x)\e^{-x^2/2},
\label{e:hermiteFunctions}
\qe
and the analytic functions
\eq
\label{e:weylMonomials}
\Psi_k(z)=\Psi_{k,N}(z):=\begin{cases}
\displaystyle\frac{\sqrt{k!}}{(k-N)!} \, z^{k-N}& \text{ if } k\geq N\\
0 & \text{ if } k< N
\end{cases}
\,,
\qe
which satisfy \eqref{Ass+} for $\Lambda:=\C$. Together with \eqref{Lbis}, the identity \eqref{genHermite2} then yields for any $f\in \H$ and $z\in\Lambda$,
  \begin{align*}
  \L f(z) & =\frac1{\pi^{1/4}}\int_\R \overline{f(x)}\left(\sum_{k=0}^\infty   H_{k+N}(x)\frac1{k!}\left(\frac{z}{\sqrt 2}\right)^k\e^{-x^2/2}\right)\d x,\\
  & =\frac1{\pi^{1/4}} \int_\R \overline{f(x)}\,H_N(x-z)\,\e^{-z^2/2+\sqrt2 xz-x^2/2}\d x\\
  & =\L_{\C,N}f(z),
  \end{align*}
  where $\L_{\C,N}$ is the higher order Bargmann transform \eqref{bargN}. Theorem~\ref{t:planarLN} thus follows from 
  $$
  \frac{\d^N}{\d z^N}\Psi_{k,0}(z)=\Psi_{k,N}(z)
  $$
 and   Theorem~\ref{t:main}. Theorem~\ref{t:planarLN} is also obtained as the particular case where $N=0$.
  \end{proof}

  \subsection{A discrete STFT and Charlier polynomials}
\label{sec:proofCharlier}

\begin{proof}[Proof of Theorem~\ref{t:planarl}]
Take $\H:=\ell^2(\N,\C)$ with the usual inner product $\langle f,g\rangle:=\sum_{x\in\N} \overline{f(x)}g(x)$. We denote by $C_k^{(a)}(x)$ be the $k$-th Charlier polynomial, satisfying
\eq
\label{orthoCharlier}
\sum_{x\in\N} C_k^{(a)}(x) C_\ell^{(a)}(x)\; \frac{a^x}{x!}= \frac{k!}{a^k}\e^a\delta_{k\ell}
\qe
with generating function, for $x\in\N$ and $ z\in\C$,
\eq
\label{genCharlier}
\sum_{k=0}^\infty C^{(a)}_k(x) \frac1{k!} z^k=(1-\frac za)^x\e^z,
\qe
see \cite[Equations (6.1.21-22)]{Ism05}. If we take for orthonormal basis the Charlier functions,
\eq
\label{e:Charlierfunction}
f_k(x):= \e^{-a/2}\frac{a^{k/2}}{\sqrt {k!}}\,C_k^{(a)}(x)\, \frac{a^{x/2}}{\sqrt {x!}}
\qe
and for analytic functions \eqref{e:weylMonomials}, then we obtain for any $f\in \H$ and $z\in\Lambda$
\begin{align*}
\L f(z)& = \e^{-a/2}\sum_{x\in\N}\overline{f(x)}\sum_{k=0}^\infty\frac1{k!}\,C_k^{(a)}(x) (\sqrt az)^k  \left(\frac{a^x}{x!}\right)^{1/2}\\
&= \e^{-a/2+\sqrt a z}\sum_{x\in\N}\overline{f(x)}(\sqrt a- z)^x \frac{1}{\sqrt{x!}} = \e^{-a/2+\sqrt a z} \ell_{\C}f(\sqrt a- z).
\end{align*}
and thus
$$
 \ell_{\C}f(z)=\e^{-a/2+\sqrt a z}\L f(\sqrt a -z).
$$
Note that the mapping $z\mapsto \sqrt a-z$ is a planar isometry which is its own inverse. The theorem now follows from Theorem~\ref{t:main}  and that
$$
\e^{-a/2+\sqrt a z} \GAF_\C^{(1)}(\sqrt a -z) \law  \GAF^{(1)}_\C(z),
$$
which is obtained by observing that the covariance kernels of both sides are identical.
\end{proof}

\subsection{Daubechies-Paul wavelet transform and Laguerre polynomials}
\label{s:analyticWavelets}

\begin{proof}[Proof of Theorem \ref{t:Hyp}]

We take here $\H:=H^2(\R)$ the space of analytic signals, fix $\beta>-1/2$ and let $L_k^{(2\beta)}(x)$ be the $k$-th Laguerre polynomial of parameter $2\beta$, satisfying
\eq
\label{Lag}
\int_{\R_+} L^{(2\beta)}_k(x)L^{(2\beta)}_\ell(x)\,x^{2\beta}\e^{-x}\d x=\frac{\Gamma(k+{2\beta}+1)}{k!} \,\delta_{k\ell}.
\qe
with the generating function, for any $z\in\mathbb D$,
\eq
\label{Lag2}
\sum_{k=0}^\infty L^{({2\beta})}_k(x) z^k=\frac1{(1-z)^{{2\beta}+1}}\e^{-x\tfrac{z}{1-z}},\qquad x\in\R_+,
\qe
see \cite[Section 4.6]{Ism05}. Take for basis $f_k$ of $\H^2(\R)$ the inverse Fourier transforms of the Laguerre functions, that is
\eq
\label{LagONB}
\hat f_k(x):= \sqrt{\frac{k!}{\Gamma(k+{2\beta}+1)}}\, L_k^{(2\beta)}(x)\, x^{\beta} \e^{-x/2} \IND_{\mathbb{R}_+}(x),
\qe
and set
\eq
 \Psi_k(z):=\sqrt{\frac{\Gamma(k+2\beta+1)}{k!}}z^k,\qquad z\in\mathbb D\,,
\label{e:hyperbolicMonomials}
\qe
which are easily seen to satisfy Assumption~\eqref{Ass+} for $\Lambda:=\mathbb D$. We then obtain, for any $f\in \H$ and $z\in\Lambda$,
\begin{align*}
\L f(z) & =\sum_{k=0}^\infty\langle f,f_k\rangle \Psi_k(z)\\
& = \sum_{k=0}^\infty\langle \hat f,\hat f_k\rangle \Psi_k(z)\\
& =  \int_{\R_+} \overline{\hat f(x)} \left(\sum_{k=0}^\infty L_k^{(2\beta)}(x) z^k\right) x^{\beta} \e^{-x/2}\d x\\
& = \frac1{(1-z)^{2\beta+1}} \int_{\R_+} \overline{\hat f(x)}\, x^{\beta}\e^{-\frac x 2\tfrac{1+z}{1-z}} \d x\\
&= \L_\D^{(\beta)}f(z),
\end{align*}
by definition \eqref{e:bergmanTransform}. The  first two assertions of the theorem follow using  Theorem~\ref{t:main}.

Next, recalling \eqref{e:DPtransform}, we have
$$
W_0 f(-u,s)\propto \L_\D^{(0)}f(\phi(u+\i s))
$$
where the proportionality symbol hides a non-vanishing analytic term, and where $$\phi:w\mapsto \frac{2w-\i}{2w+\i}$$ is a conformal mapping from $\mathbb{C}_+$ to $\mathbb{D}$. Thus the zeros of $$z=u+\i s\in\C_+\mapsto W_0\xi(-u,s)$$ have the same law as the zeros of $\L_\D^{(0)}\xi$ after taking the transform $\phi$. Now we recall that the zeros of $\L_\D^{(0)}\xi$ form a DPP with kernel the Bergman kernel $K_\D$ of the unit disc, see \eqref{e:bergmanKernel}. Consequently, we obtain by a change of variables that the zeros of $z=u+\i s\mapsto W_0 \xi(-u,s)$ form a DPP with kernel
\eq
\label{calculBerg}
K_\D(\phi(z),\phi(w))|\phi'(z)||\phi'(w)| = K_{\C_+}(z,w)\times \frac{\eta(z)}{\eta(w)},\qquad z,w\in\C_+,
\qe
where $K_{\C_+}$ was introduced in \eqref{e:bergmanKernelC+} and
$$
\eta(z):=\frac{(2z+\i)^2}{|2z+\i|^2}.
$$
Indeed, a complex change of variables yields that the DPP obtained from $K_\D(z,w)$ after applying the transform $\phi$ has kernel $K_\D(\phi(z),\phi(w))|\phi'(w)|^2 $ and one obtains  the left hand side of \eqref{calculBerg} by symmetrization, since multiplying a kernel by a factor $g(z)/g(w)$ for some non-vanishing function $g$ does not change the induced DPP. Now, since $\eta$ does not vanish on $\C_+$, the DPPs induced by the kernels \eqref{calculBerg} and $K_{\C_+}(z,w)$ are the same and the proof is complete.

\end{proof}

\subsection{A discrete STFT and Meixner polynomials}
\label{sec:proofMeixner}

\begin{proof}[Proof of Theorem \ref{t:hyp}] Let $\H:=\ell^2(\N,\C)$ equipped with the usual inner product, and let $\alpha>-1$ and $0<c<1$. We consider the Meixner polynomials satisfying the orthogonality relations
$$
\sum_{k=0}^\infty M_k(x) M_\ell(x)\frac{\Gamma(x+\alpha+1) c^x}{x!}=  \frac{\Gamma(\alpha+1)^2 k!}{c^k(1-c)^{\alpha+1}\Gamma(k+\alpha+1)}\,\delta_{k\ell}\,
$$
for any $k,\ell\in\mathbb{N}$. They satisfy the generating formula \cite[Section 6.1]{Ism05}
$$
\sum_{k=0}^\infty\frac{\Gamma(k+\alpha+1)}{k!}M_k(x) z^k=\Gamma(\alpha+1)\left(1-\frac zc\right)^x(1-z)^{-x-\alpha-1},
$$
for any $z\in\D$. Let us further consider the basis of $\H$ given by the Meixner functions,
\eq
\label{e:meixnerFunction}
f_k(x):=\frac{(1-c)^{(\alpha+1)/2}}{\Gamma(\alpha+1)}\sqrt{\frac{\Gamma(k+\alpha+1) c^k}{k!}} M_k(x) \sqrt{\frac{\Gamma(x+\alpha+1) c^x}{x!}}
\qe
and take for analytic functions on $\Lambda:=\D$,
$$
\Psi_k(z):=\sqrt{\frac{\Gamma(k+\alpha+1) }{k!}} z^k.
$$
This then yields, for any $f\in\H$ and $z\in\D$,
\begin{align*}
\L f(z) &  = \frac{(1-c)^{(\alpha+1)/2}}{(1-\sqrt{c}z)^{\alpha+1}} \sum_{x\in\N} \overline{f(x)} \left(\frac{\sqrt{c}-z}{1-\sqrt{c}z}\right)^x\sqrt{\frac{\Gamma(x+\alpha+1)}{x!}}\\
& = \frac{(1-c)^{(\alpha+1)/2}}{(1-\sqrt{c}z)^{\alpha+1}}\ell_\D^{(\alpha)} f\left(\frac{\sqrt{c}-z}{1-\sqrt{c}z}\right).
\label{e:transfoMeixner}
\end{align*}
Note that the mapping  $z\mapsto (\sqrt{c}-z)/(1-\sqrt{c}z)$ is an isometry of the hyperbolic disk, of the form \eqref{isomD} with  $a:=-\i/(1-c)$ and $b:=\i\sqrt c/(1-c)$, which is its own inverse. Inverting the relation, we obtain
$$
\ell_\D^{(\alpha)} f(z)= \frac{(1-\sqrt{c} z)^{\alpha+1}}{(1-c)^{(\alpha+1)/2}} \L f\left(\frac{\sqrt{c}-z}{1-\sqrt{c}z}\right).
$$
Now, the theorem follows from Theorem~\ref{t:main} and the fact that
$$
 \frac{(1-\sqrt{c}z)^{\alpha+1}}{(1-c)^{(\alpha+1)/2}} \GAF_{\D}^{(\alpha)}\left(\frac{\sqrt{c}-z}{1-\sqrt{c}z}\right)\law  \GAF_{\D}^{(\alpha)}(z).
$$
The latter can be checked directly by computing the covariance function of both Gaussian processes, see also the proof of \cite[Proposition 2.3.4]{HKPV09}.

\end{proof}
\subsection{A discrete STFT and Krawtchouk polynomials}
\label{sec:proofDiscF}

\begin{proof}[Proof of Theorem \ref{t:spherical}] Here we consider the setting where $\H:=\C^{N+1}$ is equipped with the usual inner product, $\langle f,g\rangle:=\sum_{x=0}^N\overline{f(x)}g(x)$.
  Let $p\in(0,1)$ and consider the Krawtchouk polynomials $K_k^{(p)}(x)$ satisfying the orthogonality relations
  $$
  \sum_{x=0}^N K_k^{(p)}(x) K_\ell^{(p)}(x) {N \choose x}p^x(1-p)^{N-x}={N \choose k}^{-1} \left(\frac{1-p}{p}\right)^k \delta_{k\ell}
  $$
  for any $k,\ell\in\mathbb{N}$. They have for generating formula \cite[Section 6.2]{Ism05}
  $$
  \sum_{k=0}^N{N \choose k} K_k^{(p)}(x) z^k=\left(1-\left(\frac{1-p}p\right)z\right)^x(1+z)^{N-x}.
  $$
Let us now use the notation $q:=\sqrt{\frac{1-p}p}$ for convenience.
  These polynomials yield an orthonormal basis for $\C^{N+1}$, the Krawtchouk functions, given by
  \eq
  f_k(x)=\frac{1}{(1+q^2)^{N/2}}\sqrt{{N \choose k} }q^{-k}  K_k^{(p)}(x) \sqrt{{N \choose x} }q^{-x}.
\label{e:fkKrawchouk}
\qe
 We set, for any $z\in\Lambda:=\C$,
$$
\Psi_k(z):= \IND_{k\leq N}\sqrt{{N \choose k}} z^k,
$$ which trivially satisfy Assumption~\eqref{Ass+}, and together with \eqref{e:fkKrawchouk}, this yields for $f\in\H$,
$$
  \L f(z) =\left(\frac{q+z}{\sqrt{1+q^2}}\right)^N\sum_{x=0}^N \overline{f(x)} \left(\frac{1-qz}{q+z}\right)^x\sqrt{ {N \choose x}}=\left(\frac{q+z}{\sqrt{1+q^2}}\right)^N\ell_{\mathbb S}^{(N)}f \left(\frac{1-qz}{q+z}\right).
  $$
  Note that the mapping $z\mapsto (1-qz)/(q+z)$ is an isometry of the sphere of the form \eqref{e:isomSphere} with $a:=\i q/\sqrt{1+q^2}$ and $b:=\i/\sqrt{1+q^2}$
that is its own inverse. Thus,
  $$
  \ell_{\mathbb S}^{(N)}f(z)= \left(\frac{q+z}{\sqrt{1+q^2}}\right)^N \L f\left(\frac{1-qz}{q+z}\right),
  $$
  and the theorem follows from Theorem~\ref{t:main} and that
  $$
  \left(\frac{q+z}{\sqrt{1+q^2}}\right)^N\GAF_{\mathbb S}^{(N)}\left(\frac{1-qz}{q+z}\right) \law \GAF_{\mathbb S}^{(N)}(z),
  $$
  which can be checked by computing the covariance functions of both sides, see also the proof of \cite[Proposition 2.3.4]{HKPV09}. We note that to prove  $\ell_{\mathbb S}^{(N)}f(z)= \GAF_{\mathbb S}^{(N)}(z)$ in law, one can  alternatively use here that the mapping $U:\C^{N+1}\to\C^{N+1}$ defined by $U\delta_{kx}=f_k(x)$ is unitary and that the law of the finite dimensional white noise $\xi$ is invariant under such transformations.

  \end{proof}

\subsection{Analytic projection and trigonometric polynomials}
\label{proof:analyticproj}
\begin{proof}[Proof of Theorem~\ref{th:analyticproj}] We simply apply Theorem~\ref{t:main} with $\H:=L^2([0,2\pi],\C)$, $\Lambda:=\D$, $\Psi_k(z):=z^{k-N}\IND_{k\geq N}$ and $f_k(x):=\e^{\i kx}/\sqrt{2\pi}$ for $k\in\Z$.
\end{proof}

\section{Implementation details and approximation guarantees}
\label{s:computation}

{In this section, we discuss computational approximations that will likely be necessary in signal processing practice. In Section~\ref{s:details}, we describe the heuristics behind Figures~\ref{f:planarDemo}, \ref{f:sphericalDemo}, and \ref{f:hyperbolicDemo}. This illustrates how we numerically locate zeros, and how we numerically compute integral transforms. These approximations are classical. Now, there is one type of approximation for which GAFs give new guarantees. In Section~\ref{s:finiteApproximationWhiteNoise}, we mathematically quantify the loss in accuracy incurred by approximating white noises by the partial sum of their defining series. In Section~\ref{s:finiteApproximationTransform}, we discuss the effect of the same truncation on the zeros of the transform of white noise. For this particular type of approximation, we show that complex analysis yields new theoretical insights that are of direct practical impact. In particular, using only a truncated white noise, we can identify areas of the complex plane where the zeros of the transform of the \emph{full} white noise are bound to be, with large probability. We illustrate the strengths and limitations of such statements on a numerical example for the hyperbolic GAF.}

\subsection{Approximating continuous transforms and locating their zeros}
\label{s:details}

In Figures~\ref{f:planarDemo}, \ref{f:sphericalDemo}, and \ref{f:hyperbolicDemo}, we meant to illustrate time-frequency transforms of white noise by actually implementing the transforms rather than directly plotting truncated GAFs. First, this mimicks signal processing practice, and second, it turns out that spectro- and scalograms of white noise are relatively ``flat'' across time and frequency/scale, which renders locating their zeros relatively easy and independent of the position on the time and frequency/scale axes. On the contrary, truncated GAFs can behave wildly when $\vert z\vert >1$ and computations quickly reach machine precision.

Two important practical points are how to approximate white noises and how to approximate time-frequency transforms that are integrals or infinite sums. Similarly to \citep{BaFlCh18}, we first reduce (and thus often approximate) all transforms to finite discrete Fourier transforms for computational convenience. More precisely, in Figure~\ref{f:planarDemo}, we approximated the Charlier transform by truncating its discrete Fourier form \eqref{e:cmp}, and the white noise was then approximated by a simple white Gaussian vector. The latter approximation is an arbitrary choice, we could also have directly truncated the expansion of the white noise onto the Charlier basis. Figure~\ref{f:sphericalDemo} did not require any approximation, since transform and white noise are finite by definition. Figure~\ref{f:hyperbolicDemo} required more thought. First, the white noise was approximated by truncating the sum $\sum \xi_k f_k$, where $\xi_k$ are independent complex unit Gaussians, and $f_k$ are defined as in Theorem~\ref{t:Hyp}. Second, the continuous wavelet transform was computed by discretizing its Fourier counterpart.

Finally, in all cases, we define numerical zeros as the grid cells that are \emph{(i)} local minima among their 8 neighbours and \emph{(ii)} have a spectrogram value that is less than 10\% of the overall maximum value. This threshold can be tuned so as to make the mean number of zeros across realizations equal to its theoretical value, computed using the intensity of the corresponding GAF.

\subsection{The finite dimensional-approximation of the white noise}
\label{s:finiteApproximationWhiteNoise}
For practical reasons, it is not enough to abstractly define the white noise $\xi$ as we did in Section~\ref{s:white}: we need to show that finite dimensional approximations, which can be implemented, accurately approximate the white noise. More precisely, given a family $(\xi_k)_{k\in\N}$ of i.i.d. $N_\C(0,1)$ random variables, set
\eq
\label{xin}
\xi^{(n)}:=\sum_{k=0}^n\xi_k \,f_k\,, \qquad \xi:=\sum_{k=0}^\infty\xi_k \,f_k\, .
\qe
As we explained in Section~\ref{s:white}, the series defining $\xi$ converges a.s. in the space $\Theta$ associated with the basis $(f_k)$ of $\H$, and we denote the distribution of the white noise $\xi$ as $\mu$.

First, we provide an easy estimate for the Wasserstein distance $\mathcal{W}_2$ between the law $\mu^{(n)}$ of $\xi^{(n)}$ and $\mu$. Recall that for any $p\geq 1$ the Wasserstein-Kantorovich distance $\mathcal{W}_p(\nu_1,\mu_2)$ between two probability measures on $\Theta$ is defined by
$$
\mathcal{W}_p(\nu_1,\nu_2)^p:=\inf_\pi \iint \|f-g\|_\Theta^p\d \pi(f,g)=\inf_{X,Y}\E\|X-Y\|_\Theta^p\, ,
$$
where the first infimum is taken over all probability measures $\pi$ on $\Theta\times\Theta$ having for marginals $\nu_1$ and $\nu_2$, whereas the second infimum is taken over all couples $(X,Y)$ of random variables with respective marginal distributions $\nu_1$ and $\nu_2$. Convergence in $\mathcal{W}_p$ is equivalent to weak convergence plus convergence of the first $p$-moments.

\begin{proposition} For any $n\geq 1$,
$$
\mathcal{W}_2(\mu^{(n)},\mu)\leq \frac1{\sqrt n}\,.
$$
\end{proposition}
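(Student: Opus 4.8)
The plan is to exploit the \emph{shared-randomness coupling} that is already built into the definitions in \eqref{xin}: both $\xi$ and $\xi^{(n)}$ are constructed from one and the same i.i.d. sequence $(\xi_k)_{k\in\N}$. Since the pair $(\xi^{(n)},\xi)$ is a random variable on $\Theta\times\Theta$ whose marginals are exactly $\mu^{(n)}$ and $\mu$, it is an admissible coupling in the variational definition of $\mathcal{W}_2$, so
$$
\mathcal{W}_2(\mu^{(n)},\mu)^2\leq \E\big\|\xi^{(n)}-\xi\big\|_\Theta^2 = \E\Big\|\sum_{k>n}\xi_k\,f_k\Big\|_\Theta^2.
$$

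Next I would compute the right-hand side explicitly. By the very definition \eqref{Theta_norm} of $\|\cdot\|_\Theta$ and the orthonormality of $(f_k)$, one has $\big\|\sum_{k>n}\xi_k f_k\big\|_\Theta^2=\sum_{k>n}\frac{|\xi_k|^2}{1+k^2}$, exactly as in \eqref{Err}. Taking expectations and using $\E|\xi_k|^2=1$ (by monotone convergence to justify the exchange of $\E$ and the sum, all terms being nonnegative) gives
$$
\E\Big\|\sum_{k>n}\xi_k f_k\Big\|_\Theta^2=\sum_{k>n}\frac1{1+k^2}.
$$

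Finally, a routine integral comparison closes the estimate: $\sum_{k>n}\frac{1}{1+k^2}\leq \sum_{k>n}\frac{1}{k^2}\leq \int_n^\infty \frac{\d x}{x^2}=\frac1n$, whence $\mathcal{W}_2(\mu^{(n)},\mu)\leq n^{-1/2}$. There is essentially no hard step here; the only point deserving a line of care is checking that the concrete coupling is a legitimate competitor in the infimum defining $\mathcal{W}_2$ (which is immediate, since its marginals are $\mu^{(n)}$ and $\mu$ by construction), and the elementary tail bound on $\sum_{k>n}(1+k^2)^{-1}$.
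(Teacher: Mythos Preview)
Your proof is correct and follows essentially the same approach as the paper's own proof: use the natural coupling $(\xi^{(n)},\xi)$ built from the common sequence $(\xi_k)$, compute $\E\|\xi^{(n)}-\xi\|_\Theta^2=\sum_{k>n}(1+k^2)^{-1}$, and bound the tail by $1/n$. The only cosmetic differences are that the paper invokes Fubini--Tonelli where you use monotone convergence, and it writes the tail bound without the intermediate integral comparison.
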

\begin{proof}
By definition of $\mathcal{W}_2$ and Fubini-Tonelli's theorem,
$$
\mathcal{W}_2(\mu^{(n)},\mu)^2\leq\E\|\xi^{(n)}-\xi\|_\Theta^2=\E\sum_{k=n+1}^\infty \frac{|\xi_k|^2}{1+k^2}=\sum_{k=n+1}^\infty \frac{1}{1+k^2}\leq \frac1n\,.
$$
\end{proof}
The Kantorovich-Rubinstein dual representation for $\mathcal{W}_1$ states that, if we denote by  $\|F\|_{\mathrm{Lip}}$ the Lipschitz constant of a Lipschitz function $F:\Theta\to\R$, then we have
$$
\mathcal{W}_1(\nu_1,\nu_2)=\sup_{\|F\|_{\mathrm{Lip}}\leq 1}\int F\,\d\nu_1-\int F\,\d\nu_2.
$$
Since the Cauchy-Schwarz inequality provides $\mathcal{W}_1(\nu_1,\nu_2)\leq \mathcal{W}_2(\nu_1,\nu_2)$, one  can replace
$\mathcal{W}_2$ by $\mathcal{W}_1$ in the previous proposition so as to obtain:

\begin{corollary}
\label{Lipcontrol}
For any Lipschitz function $F:\Theta\to\R$ and $n\geq 1$, we have
$$
|\E F(\xi^{(n)})-\E F(\xi)|\leq \frac{\|F\|_{\mathrm{Lip}}}{\sqrt n}\,.
$$
\end{corollary}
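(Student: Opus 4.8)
The plan is to read Corollary~\ref{Lipcontrol} off the preceding Proposition, with essentially no extra work. The cleanest route is the synchronous coupling: take $\xi$ and $\xi^{(n)}$ built from one and the same i.i.d.\ sequence $(\xi_k)_{k\in\N}$ as in \eqref{xin}, so that $(\xi^{(n)},\xi)$ is an explicit coupling of $(\mu^{(n)},\mu)$ supported on $\Theta\times\Theta$ (legitimate since both series converge a.s.\ in $\Theta$, by Section~\ref{s:white}). Then, for any Lipschitz $F:\Theta\to\R$,
$$
|\E F(\xi^{(n)})-\E F(\xi)|\le \E\,|F(\xi^{(n)})-F(\xi)|\le \|F\|_{\mathrm{Lip}}\,\E\,\|\xi^{(n)}-\xi\|_\Theta,
$$
and by Jensen's inequality $\E\|\xi^{(n)}-\xi\|_\Theta\le\big(\E\|\xi^{(n)}-\xi\|_\Theta^2\big)^{1/2}$, which is at most $1/\sqrt n$ by the computation already carried out in the proof of the Proposition.

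Equivalently — and this matches the framing of the paragraph just above the corollary — I would apply the Kantorovich--Rubinstein duality to get, after rescaling $F$ by $\|F\|_{\mathrm{Lip}}$, the bound $|\E F(\xi^{(n)})-\E F(\xi)|\le \|F\|_{\mathrm{Lip}}\,\mathcal{W}_1(\mu^{(n)},\mu)$, then combine $\mathcal{W}_1\le\mathcal{W}_2$ (Cauchy--Schwarz inside any coupling) with $\mathcal{W}_2(\mu^{(n)},\mu)\le 1/\sqrt n$ from the Proposition. Either way the claim follows immediately.

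There is no genuine obstacle here; the only points worth a word of care are that $F(\xi^{(n)})$ and $F(\xi)$ are integrable — which holds because a Lipschitz $F$ on $\Theta$ satisfies $|F(\theta)|\le |F(0)|+\|F\|_{\mathrm{Lip}}\|\theta\|_\Theta$ and $\E\|\xi\|_\Theta<\infty$ — and that the coupling is honestly carried by $\Theta\times\Theta$. I would simply cite the Proposition for the $1/\sqrt n$ estimate rather than reproduce the series computation.
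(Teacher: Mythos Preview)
Your proposal is correct and your second route --- Kantorovich--Rubinstein duality, then $\mathcal{W}_1\le\mathcal{W}_2$, then the Proposition --- is exactly the paper's argument, which is given in the paragraph immediately preceding the corollary. Your first route via the explicit synchronous coupling is just the same bound unpacked without naming the duality, so there is no substantive difference.
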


The next result provides a precise probabilistic estimate on the deviation of $\xi^{(n)}$ from $\xi$ in~$\Theta$.

\begin{proposition}
\label{p:concentration}For any $n\geq 1$ and $\epsilon>0$, we have
$$
\P\big(\|\xi^{(n)}-\xi\|_\Theta>\epsilon\big)\leq 2\,\e^{-n\epsilon^2/2}.
$$
\end{proposition}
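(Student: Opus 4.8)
The plan is to realize $\|\xi^{(n)}-\xi\|_\Theta^2$ as a weighted sum of independent exponential random variables and apply a Chernoff bound. From $\xi-\xi^{(n)}=\sum_{k>n}\xi_k f_k$, the definition of $\|\cdot\|_\Theta$, and the orthonormality of $(f_k)$, one gets
\[
\|\xi^{(n)}-\xi\|_\Theta^2=\sum_{k=n+1}^\infty\frac{|\xi_k|^2}{1+k^2}=:S .
\]
By the definition of an $N_\C(0,1)$ variable, $|\xi_k|^2=\tfrac12(G_1^2+G_2^2)$ for independent real standard Gaussians $G_1,G_2$, so the $|\xi_k|^2$ are i.i.d.\ exponential with unit mean, with moment generating function $\E[\e^{s|\xi_k|^2}]=(1-s)^{-1}$ for $s<1$. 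Hence, for any $t>0$ such that $t<1+(n+1)^2$ (so that $t/(1+k^2)<1$ for every $k>n$), independence gives $\E[\e^{tS}]=\prod_{k>n}(1-t/(1+k^2))^{-1}<\infty$, and Markov's inequality applied to $\e^{tS}$ yields
\[
\P\big(\|\xi^{(n)}-\xi\|_\Theta>\epsilon\big)=\P(S>\epsilon^2)\le \e^{-t\epsilon^2}\prod_{k=n+1}^\infty\Big(1-\frac{t}{1+k^2}\Big)^{-1}.
\]

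I would then specialize to $t=n/2$ and bound the product. A short computation shows $t/(1+k^2)\le 1/\big(2(n+1)\big)\le 1/4$ for every $k>n$, and, using $-\log(1-x)\le x/(1-x)$ for $0\le x<1$ together with $\sum_{k>n}(1+k^2)^{-1}\le 1/n$ (which follows from $(1+k^2)^{-1}\le\tfrac1{k-1}-\tfrac1k$ and telescoping),
\[
-\sum_{k=n+1}^\infty\log\Big(1-\frac{t}{1+k^2}\Big)\le\frac{1}{1-\tfrac14}\,t\sum_{k=n+1}^\infty\frac{1}{1+k^2}\le\frac43\cdot\frac n2\cdot\frac1n=\frac23 .
\]
Plugging this in gives $\P(\|\xi^{(n)}-\xi\|_\Theta>\epsilon)\le \e^{2/3}\,\e^{-n\epsilon^2/2}$, and since $\tfrac23<\log 2$ we have $\e^{2/3}<2$, which is the claimed estimate.

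The only delicate point is quantitative: the prefactor $2$ leaves little slack, since even the crudest version of the argument produces a constant close to $2$ (for instance $-\log(1-x)\le x+x^2$ with $t=n/2$ overshoots), so the choice of $t$ and the estimate of the infinite product must be carried out with some care. One could alternatively observe that the inequality is trivial whenever $2\e^{-n\epsilon^2/2}\ge1$, i.e.\ when $n\epsilon^2\le 2\log 2$, and run the Chernoff estimate only in the complementary regime; this is not necessary, however, as the bound above is valid for all $\epsilon>0$ simultaneously. Everything else — the exponential law of $|\xi_k|^2$, the Chernoff step, and convergence of the product — is routine.
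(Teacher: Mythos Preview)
Your proof is correct and follows the same Chernoff-bound strategy as the paper: write $\|\xi^{(n)}-\xi\|_\Theta^2$ as a weighted sum of i.i.d.\ exponentials, compute the Laplace transform as a product, bound the logarithm of the product, and take $t=n/2$. The only difference is in the estimate of $-\log(1-x)$: the paper invokes ``$\log(1-x)\geq -x$ for $x\in(0,1/2)$'', which in fact goes the wrong way, whereas you use the valid bound $-\log(1-x)\le x/(1-x)$ together with $x\le 1/4$ to get the factor $4/3$ and the constant $\e^{2/3}<2$. So your argument is not only the same in spirit but actually patches a slip in the paper's write-up.
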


Notice that is is not possible to compare $\xi^{(n)}$ and $\xi$ in $\H$ since $\|\xi^{(n)}-\xi\|=+\infty$ a.s. Let us also mention that we did not try to optimize the constants $2$ and $1/2$, but we believe that the regime $n\epsilon^2 $ is of  optimal order.

\begin{proof} We set
$$
X_n:=\|\xi^{(n)}-\xi\|_\Theta^2=\sum_{k=n+1}^\infty \frac{|\xi_k|^2}{1+k^2}
$$
and compute its Laplace transform. Using independence and that $|\xi_k|^2\law(N^2+M^2)/2$ where $N,M$ are i.i.d. standard $N(0,1)$ variables, we obtain for any $|t|<1+(n+1)^2$,
$$
\E[\e^{t X_n}]  = \prod_{k=n+1}^\infty \E\left[\e^{\frac t{2(1+k^2)}N^2}\right]^2= \prod_{k=n+1}^\infty \frac1{1-\frac{t}{1+k^2}}\,.
$$
As a consequence, and using that $\log(1-x)\geq -x$ for $x\in(0,1/2)$,  we have for  $0<t<(1+(n+1)^2)/2$,
\begin{align*}
\P\big(\|\xi^{(n)}-\xi\|_\Theta>\epsilon\big) & =\P(X_n>\epsilon^2)\\
& \leq \E[\e^{t X_n}]\,\e^{-t\epsilon^2}\\
&\leq \exp\Big\{t\sum_{k=n+1}^\infty \frac1{1+k^2}\Big\}\,\e^{-t\epsilon^2}\\
& \leq \e^{t/n -t\epsilon^2}.
\end{align*}
The result follows by taking $t=n/2$ since $\sqrt\e< 2$.
\end{proof}
In particular this estimate allows us to provide a quantitative control on the zeros of finite dimensional approximations of the transform $\L\xi$ in the next section.

\subsection{The finite-dimensional approximation of $\L \xi$}
\label{s:finiteApproximationTransform}

Recalling \eqref{xin}, we investigate here how close $\L \xi^{(n)}$ is from $\L\xi$. Corollary \ref{Lipcontrol} already provides the next result.

\begin{corollary}
For any compact $K\subset\Lambda$ and any Lipschitz function $\phi:\C\to\R$, we have
$$
\sup_{z\in K}\left|\E(\phi(\L \xi^{(n)}(z)))-\E(\phi(\L \xi(z)))\right|\leq \|\phi\|_{\mathrm{Lip}} \sqrt{ \frac{C_K}{n}}.
$$
\end{corollary}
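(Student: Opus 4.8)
The plan is to combine the extension bound \eqref{TzLin} on $\L$ with Corollary~\ref{Lipcontrol}. The starting point is the observation that, for each fixed $z \in K$, the composition $F := \phi \circ \mathrm{ev}_z \circ \L$, where $\mathrm{ev}_z(g) := g(z)$ is the evaluation map $\mathcal A(\Lambda) \to \C$, is a function $\Theta \to \R$ to which the Corollary applies. First I would check that this $F$ is Lipschitz: by \eqref{TzLin} the extended map $\L : \Theta \to \mathcal A(\Lambda)$ satisfies $|\L\theta(z)| \le \sqrt{C_K}\,\|\theta\|_\Theta$ uniformly for $z \in K$, and since $\L$ is linear this gives $|\L\theta_1(z) - \L\theta_2(z)| \le \sqrt{C_K}\,\|\theta_1 - \theta_2\|_\Theta$; composing with $\phi$, which is $\|\phi\|_{\mathrm{Lip}}$-Lipschitz on $\C$, shows $|F(\theta_1) - F(\theta_2)| \le \|\phi\|_{\mathrm{Lip}}\sqrt{C_K}\,\|\theta_1 - \theta_2\|_\Theta$, so $\|F\|_{\mathrm{Lip}} \le \|\phi\|_{\mathrm{Lip}}\sqrt{C_K}$.

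Next, I would apply Corollary~\ref{Lipcontrol} to this $F$. Since $F(\xi) = \phi(\L\xi(z))$ and $F(\xi^{(n)}) = \phi(\L\xi^{(n)}(z))$ (using that $\L$ extended to $\Theta$ agrees with the finite sum on $\xi^{(n)}$, as recorded just before \eqref{Ttheta}), the Corollary yields
\[
\left|\E\big(\phi(\L\xi^{(n)}(z))\big) - \E\big(\phi(\L\xi(z))\big)\right| \;=\; \left|\E F(\xi^{(n)}) - \E F(\xi)\right| \;\le\; \frac{\|F\|_{\mathrm{Lip}}}{\sqrt n} \;\le\; \|\phi\|_{\mathrm{Lip}}\sqrt{\frac{C_K}{n}}.
\]
The right-hand side does not depend on $z$, so taking the supremum over $z \in K$ gives the claim.

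There is essentially no obstacle here; the only point requiring a line of care is the measurability/well-definedness of $F$ as a real function on $\Theta$, i.e.\ that $\theta \mapsto \L\theta(z)$ is a genuine (continuous, hence Borel) map $\Theta \to \C$ — but this is exactly the content of the continuous extension $\L : \Theta \to \mathcal A(\Lambda)$ constructed in Section~\ref{s:white} together with the continuity of evaluation $\mathrm{ev}_z$ in the topology of local uniform convergence. One should also note $C_K \ge 0$ so the square root is harmless. Everything else is a direct substitution into Corollary~\ref{Lipcontrol}.
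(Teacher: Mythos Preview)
Your proof is correct and follows exactly the same approach as the paper: for each fixed $z\in K$, observe via \eqref{TzLin} that $\theta\mapsto \phi(\L\theta(z))$ is Lipschitz on $\Theta$ with constant $\|\phi\|_{\mathrm{Lip}}\sqrt{C_K}$, then apply Corollary~\ref{Lipcontrol}. The paper's own proof is a one-sentence version of what you wrote.
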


\begin{proof} Indeed, for any fixed $z\in K$, the mapping $f\mapsto \phi(\L f(z))$ from $\Theta$ to $\R$ is Lipschitz with constant $\|\phi\|_{\mathrm{Lip}}\sqrt{C_k}$ according to \eqref{TzLin} as a composition of Lipschitz functions.
\end{proof}

Finally, we provide a control on the zeros of $\L\xi^{(n)}$, that we observe on numerical simulations like Figure~\ref{f:gafs}, and those of $\L\xi$ which have for distribution the law of the zeros of various GAFs of interest.

\begin{theorem} Let $\epsilon>0$,  fix a compact window $K\subset\Lambda$ and let $n\geq 1$ be such that
\eq
\label{e:assCK}
\sup_{z\in K}\sum_{k=n+1}^\infty(1+k^2)|\Psi_k(z)|^2\leq 1,
\qe
whose existence is guaranteed by Assumption~\eqref{Ass+}. The following holds true with probability at least $1-2\e^{-n\epsilon^2/2}$: given any simple $C^1$ closed curve $\gamma\subset K$ such that $|\L \xi^{(n)}(z)|>\epsilon$  for all $z\in\gamma$, $\L \xi^{(n)}$ and $\L \xi$ have the same number of zeros in the simply connected domain $D$  defined by having boundary $\gamma$, namely $\partial D=\gamma$.
\label{t:rouche}
\end{theorem}

\begin{proof} Let $A$ be the set of events where $\|\xi^{(n)}-\xi\|_{\Theta}\leq\epsilon$ is true. Thus $\P(A)\geq 1-2\e^{-n\epsilon^2/2}$ by Proposition~\ref{p:concentration}. Restricting to the events of $A$, we have by the Cauchy-Schwarz inequality and \eqref{e:assCK} that,  for every $\gamma$ as in the theorem and for every $z\in \gamma$,
$$
|\L (\xi^{(n)}-\xi)(z)| \leq \sqrt{\sum_{k={n+1}}^\infty (1+k^2)|\Psi_k(z)|^2}\; {\| \xi^{(n)}-\xi\|}_{\Theta}\leq \epsilon.
$$
Since by assumption $|\L\xi^{(n)}(z)|>\epsilon$ for every $z\in\gamma$, the result follows from Rouché's theorem.
\end{proof}

{
As an illustration, consider the setting of Section~\ref{sec:hypGAF} where $\H$ is the space of analytic signals $H^2(\R)$. Set the parameter $\beta$ to zero and let $\L=\L_\D^{(0)}$ be the Bergman transform on the disk; thus $\Lambda=\D$ and $\Psi_k(z)=z^k$. If we take $K=\{z\in\C:\;|z|\leq\delta\}$ for some $0<\delta<1$, it follows from rough upper bounds
that Assumption~\eqref{e:assCK} is satisfied as soon as
$$
n\geq \max\left(\frac{\log((1-\delta^2)/2)}{\log\delta},\frac4{\log^2\delta}\right).
$$
For instance if $\delta=0.95$ then \eqref{e:assCK} holds as soon as $n\geq 1520$. Now, take $\epsilon=0.1$, so that $1-2\e^{-1520\epsilon^2/2}\geq 0.99$. Given a realization $(\xi_k(\omega))_{k\in\N}$ of the i.i.d standard complex Gaussian variables, if the polynomial
$$
\mathscr L_\D^{(0)} \xi^{(1520)}_\omega(z):=\sum_{k=0}^{1520}\xi_k(\omega)\, z^k
$$
has $m$ zeros inside any domain $D_\gamma$ delimited by a simple closed contour $\gamma\subset K$ such that  $|\mathscr L_\D^{(0)} \xi^{(1520)}_\omega(z)|>0.1$ for every $z\in\gamma$, then with more than $99\%$ chance the associated realization of the full transform
$$
\mathscr L_\D^{(0)} \xi_\omega(z):=\sum_{k=0}^{\infty}\xi_k(\omega)\, z^k
$$
has $m$ zeros  in $D_\gamma$ as well.
}

{Figure~\ref{f:gruyere} illustrates a realization of the truncated hyperbolic GAF with these values for $\epsilon$, $n$ and $\delta$. The level lines $\vert z\vert =\epsilon = 0.1$ are shown in red. Since each of the detected 11 zeros is alone in its red contour, we can be confident that the limiting GAF only has one zero inside each red contour as well. However, while statements like Theorem~\ref{t:rouche} provide new insights tools, an important limitation is that the theorem does not provide any information on the zeros in the annulus $\D\setminus K$, while most of the $1520$ zeros of the polynomial plotted in Figure~\ref{f:gruyere} are actually there, i.e., in $\vert z\vert >\delta$.
}


\begin{figure}
  \centering
  \includegraphics[width=.65\textwidth]{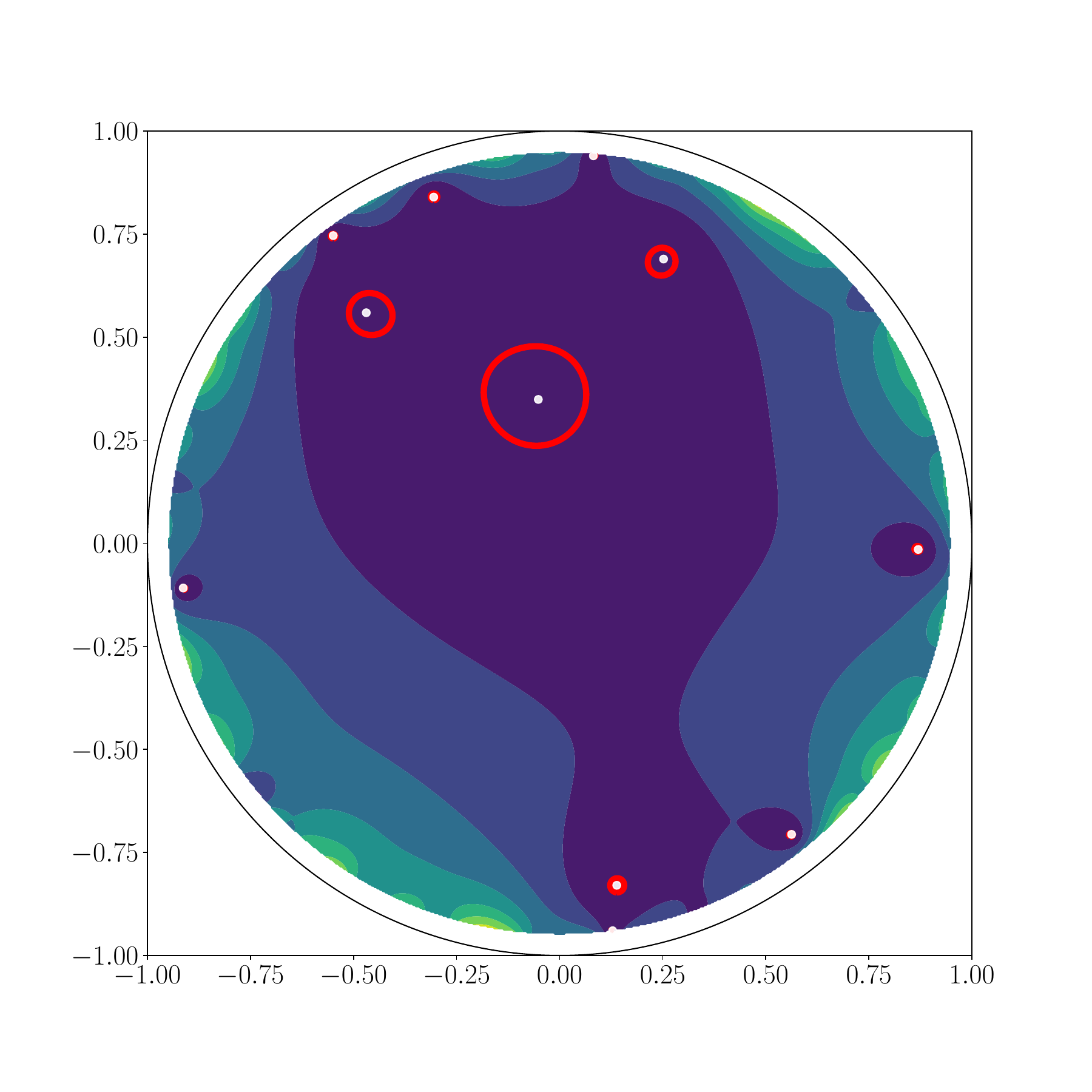}
  \caption{Illustrating how to use Theorem~\ref{t:rouche} to guarantee the location of some of the zeros of the hyperbolic GAF.}
  \label{f:gruyere}
\end{figure}

\section{Discussion}
\label{s:discussion}
We have formalized a correspondence between signal processing transforms and GAFs through generating identities for orthogonal polynomials. This correspondence allows to easily identify the law of the zeros of many time-frequency transforms of white noises, which is a prerequisite to developing the research line initiated by \cite{Fla15} on signal filtering and reconstruction with zeros of spectrograms and scalograms.

The link with GAFs is important for both signal processing and probability. For signal processing, it provides a toolbox containing many probabilistic results on the zeros of GAFs \citep{HKPV09}; these results can in turn be of statistical use for signal detection and reconstruction. For probabilists, the link with windowed discrete Fourier transforms gives efficient approximate sampling of all three invariant zero sets of GAFs. Moreover, signal processing can raise new natural questions on GAFs, such as understanding the behavior of the zeros when simple signals are present on top of the noise.

Although a systematic treatment of reconstruction procedures with zeros is beyond the scope of this paper, we first stress that all three invariant sets of zeros of GAFs can lead to efficient estimation of summary functionals precisely because of this invariance, so that the frequentist testing procedures of \cite{BaFlCh18} can always be implemented. Second, the discrete transforms that were unveiled in this paper remove one step of approximation in practice, namely the discretization of integrals. Furthermore, we have shown how the remaining approximation step --~the truncation of the sums defining the white noise and its transforms~-- can be controlled.

From a mathematical perspective, it would be interesting to go beyond classical orthogonal polynomials. The results in Section~\ref{s:white} do not depend on the polynomial character of the basis, we merely used polynomials to obtain generating functions. Furthermore, it would be interesting to know how much of our results we can preserve while bypassing the need for analytic-valued transforms. It is not obvious how to preserve the isolation of zeros, for starters, but allowing non-Gaussian windows in the short-time Fourier transform and non-Cauchy wavelets comes at the price of abandoning analyticity \citep{AsBr09,HKPA19}.

\bibliographystyle{plainnat}
\bibliography{stft,stft2,stats}

\end{document}